\newtheorem{theorem}{Theorem}[section]
\newtheorem{lemma}[theorem]{Lemma}
\newtheorem{corollary}[theorem]{Corollary}
\newtheorem{proposition}[theorem]{Proposition}
\newtheorem{fact}[theorem]{Fact}
\newtheorem{claim}{Claim}[theorem]
\pgfplotsset{compat=1.8}
\def\NN{{\mathbb N}}  
\def\EE{{\mathbb E}}
\def\epsilon{\varepsilon}
\let\eps=\varepsilon
\renewcommand{\star}{\ast}
\title{On the Erd\H{o}s--S\'os conjecture for trees with bounded degree}
\date{}
\author{Guido Besomi, Mat\'ias Pavez-Sign\'e\footnote{MPS was supported by ANID	Doctoral scholarship ANID-PFCHA/Doctorado Nacional/2017-21171132.}, and Maya Stein\footnote{MS is also affiliated to Centro de Modelamiento Matem\'atico, Universidad de Chile, UMI 2807 CNRS. MS acknowledges support by  CONICYT + PIA/Apoyo a centros cient\'ificos y tecnol\'ogicos de excelencia con financiamiento Basal, C\'odigo AFB170001,
and by Fondecyt Regular Grant 1183080.}\\ \ \\
Departamento de Ingenier\'ia Matem\'atica\\ Universidad de Chile\\  
}
\begin{document}

\maketitle
\begin{abstract}
We prove the Erd\H os--S\'os conjecture for trees with bounded maximum degree and large dense host graphs. As a corollary, we obtain an upper bound on the multicolour Ramsey number of large trees whose maximum degree is bounded by a constant.
\end{abstract}
		
\section{Introduction} 
Given $k\in\mathbb N$, the famous Erd\H os--S\'os conjecture from 1964 (see~\cite{Erdos64}) states that every graph with average degree greater than $k-1$ contains all trees with $k$ edges. This conjecture is tight for every $k\in\mathbb N$, which can be seen by considering the complete graph on $k$ vertices. This graph has average degree  $k-1$ but it is too small to  contain any tree  with $k$ edges. A structurally different example is the {\it balanced} complete bipartite graph on $2k-2$ vertices (where by {\it balanced} we mean that the bipartition classes have equal sizes). This graph has average degree $k-1$ but does not contain the $k$-edge star. In order to obtain examples of larger order, one can consider the disjoint union of copies of the two {\it extremal graphs} we just described.

It is easy to see that the Erd\H os--S\'os conjecture is true for stars and  double stars (the latter are graphs obtained by joining the centres of two stars with an edge). A classical result of Erd\H os and Gallai~\cite{Erdos1959} implies that it also holds for paths. In the early 90's Ajtai, Koml\'os, Simonovits and Szemer\'edi announced a proof of the Erd\H os--S\'os conjecture for large~$k$. Nevertheless, many particular cases has been settled since then. For instance, Brandt and Dobson~\cite{bradob} proved that the Erd\H os--S\'os conjecture is true for graphs with girth at least $5$, and Sacl\'e and Wo\'zniak~\cite{sacwoz} proved it for $C_4$-free graphs. Goerlich and Zak~\cite{goerlich2016} proved the Erd\H os--S\'os conjecture for graphs of order $n=k+c$, where~$c$ is a given constant and $k$ is sufficiently large depending on $c$. More recently,   Rozho\v{n}~\cite{rohzon} gave an approximate version of the Erd\H os--S\'os conjecture for trees with linearly bounded maximum degree and dense host graph. Independently, the authors proved in~\cite{BPS1} a similar result but for trees with maximum degree bounded by $k^{\frac 1{67}}$ and dense host graphs. 

Given a positive integers $k$ and $\Delta$, let $\mathcal T(k,\Delta)$ denote the set of all trees $T$ with~$k$ edges and $\Delta(T)\le\Delta$. The main result of this paper is that the Erd\H os--S\'os conjecture holds for all trees whose maximum degree is bounded by a constant and whose size is linear in the order of the host graph. 

\begin{theorem}\label{thm:main}
For all $\delta>0$ and $\Delta\in \mathbb N$, there is $n_0\in\NN$ such that for each $k, n\in\mathbb N$ with $n\ge n_0$ and $n\ge k\ge \delta n$, and for each $n$-vertex graph $G$ the following holds. If $G$ satisfies $d(G)> k-1$, then $G$ contains every tree $T\in\mathcal T(k,\Delta)$.\end{theorem}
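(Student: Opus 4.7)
The plan is to follow the well-established regularity method for tree embedding, combined with a stability-type case analysis to close the gap between the approximate version (proved by the authors in~\cite{BPS1} and by Rozho\v{n}~\cite{rohzon}) and the exact bound $d(G)>k-1$. Since $k\ge\delta n$ and $\Delta(T)\le \Delta$ is constant, the regularity lemma together with blow-up--type embeddings is well suited for the task.

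First, I would apply Szemer\'edi's Regularity Lemma to $G$ with parameters depending on $\delta$ and $\Delta$, obtaining a regular partition and a reduced graph $R$ whose edges correspond to $\eps$-regular pairs of density at least $d$, for suitable small $\eps$ and $d\ll\delta$. The condition $d(G)>k-1$ translates into an average-degree condition on $R$ (weighted by cluster sizes), so $R$ itself has average degree essentially at least $(1+\gamma)k/n$ times the number of clusters, for some tiny $\gamma$ depending on $\eps,d$, or else we are in a near-extremal configuration. The main dichotomy is:
\begin{itemize}
\item[(i)] \emph{Non-extremal case:} $R$ admits a substructure (a connected subgraph, a matching, or a pair of adjacent clusters supported by many other clusters) whose total cluster weight exceeds $(1+\gamma)k$;
\item[(ii)] \emph{Extremal case:} $G$ is very close in edit distance to a disjoint union of copies of the two extremal graphs, namely $K_k$ and the balanced complete bipartite graph on $2k-2$ vertices.
\end{itemize}

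In the non-extremal case, I would cut the tree $T$ into a small ``seed'' (a subtree containing all high-branching vertices and some pending leaves) together with a collection of small subtrees (bare paths and small pieces), then embed the seed greedily into the reduced graph using regularity, and finally embed the small pieces into the remaining regular pairs using the Koml\'os--S\'ark\"ozy--Szemer\'edi tree embedding technique, which is well behaved under the bounded-degree assumption. The slack $\gamma k$ in the reduced graph is used to absorb losses from the atypical vertices of the regular partition and to route the embedding across cluster boundaries; this part is essentially the approximate Erd\H os--S\'os result applied locally.

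In the extremal case, the real work begins. Here I would run a careful stability argument: on the one hand, since $d(G)>k-1$ and $G$ is close to $K_k\cup (n-k)K_1$ or close to $K_{k-1,k-1}\cup (n-2k+2)K_1$, some vertices outside the extremal block must still have positive degree into it, and the extremal block itself cannot be exactly $K_k$ or $K_{k-1,k-1}$. I would exploit this by splitting the tree $T$ along a centroid or a suitable edge into two (or a few) subtrees of controlled sizes, fitting each into the extremal block in a way that mirrors its structure (using a star decomposition in the $K_k$ case, a bipartition-respecting embedding in the bipartite case), and using the extra edges between the block and its complement to accommodate the vertices that do not fit inside. The bounded-degree hypothesis on $T$ is essential here, since it guarantees that splittings produce subtrees whose sizes behave predictably and that a constant number of ``adjustments'' suffices.

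The main obstacle I expect is precisely this extremal analysis: handling both extremal configurations \emph{simultaneously and exactly}, with no error term, seems to require a detailed case split according to the bipartition ratio of $T$ (since the bipartite extremal graph is tight only for trees with balanced bipartition, while $K_k$ is tight only up to the star issue), and according to how close $G$ is to the respective extremal graph. Building a robust splitting lemma for bounded-degree trees that produces subtrees matching any prescribed bipartition target up to $O(\Delta)$, and then embedding each piece into the near-extremal structure of $G$ with no slack to spare, is where the technical heart of the argument will lie.
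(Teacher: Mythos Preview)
Your overall architecture---regularity followed by a stability dichotomy into a robust case and a near-extremal case---matches the paper's. The robust case in the paper is handled not by a seed-plus-small-pieces decomposition \`a la Koml\'os--S\'ark\"ozy--Szemer\'edi, but by structural lemmas on each connected component of the reduced graph (bipartite versus non-bipartite, large versus small) imported from~\cite{BPS1}; this is a difference of packaging rather than of substance.

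The substantive divergence is in the extremal case, and here there is a gap in your plan. You correctly identify the extremal configuration as a disjoint union of near-$K_k$'s and near-$K_{k-1,k-1}$'s, but your proposed treatment then focuses on a \emph{single} block: embed $T$ into that block, using edges to its complement to absorb whatever does not fit. This is exactly the step you flag as the main obstacle, and it is genuinely delicate with no slack. The paper sidesteps it. After the standard reduction to $\delta(G)\ge k/2$, every component of the near-extremal decomposition is a full block (no low-degree debris). The paper then argues a clean dichotomy: either there are \emph{no} edges between distinct blocks, in which case averaging forces some block $U_{i^\star}$ to satisfy $d(G[U_{i^\star}])>k-1$ exactly, and Theorem~\ref{smalltheorem} (the standalone near-complete and near-complete-bipartite propositions) applies verbatim to $G[U_{i^\star}]$; or there \emph{is} a cross-edge $u_iu_j$ between two distinct blocks $U_i,U_j$, in which case one cuts $T$ at a centroid vertex $t^*$, sends $t^*$ to $u_i$ and the root of one large subtree to $u_j$, and embeds the two resulting pieces greedily into $U_i$ and $U_j$ separately. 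Each piece has at most $(1-c)k$ vertices for some $c=c(\Delta)>0$, so the near-completeness of each block gives ample room and only the crude greedy Lemmas~\ref{lem:greedy} and~\ref{lem:greedy3} are needed.

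Thus the exact-threshold work is entirely confined to Theorem~\ref{smalltheorem}, which is proved independently and tolerates a small multiplicative error in $|G|$; the cross-edge trick means one never has to squeeze all of $T$ into a single near-extremal block with zero slack. Your proposal does not contain this mechanism, and without it the extremal case as you describe it would require precisely the hard single-block argument you were worried about.
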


Our proof of Theorem~\ref{thm:main} splits into two cases. If $G$ is connected and considerably larger than $k$, we proceed as follows. After regularising $G$ we 
 inspect the components of the reduced graph, at least one of which has to have large average degree. If this component is large enough, then we can show it is either bipartite or contains a useful matching structure, and can embed any given tree $T\in\mathcal T(k,\Delta)$ using regularity and tools from~\cite{BPS1}. Otherwise,  the reduced graph is a union of graphs corresponding to the description given in the first paragraph of the Introduction, that is, graphs which are almost complete and of size roughly $k$ or balanced almost complete bipartite graphs of size roughly $2k$. In that case we use an edge of $G$ to connect two components and embed $T$ there.

If, on the other hand, the order of the host graph is very close to $k$,  if the host graph is close to being a  bipartite graph of order $2k$, or if the host graph is the disjoint union of such graphs, then a different approach is needed. To take care of these cases, we prove the following result, Theorem~\ref{smalltheorem}.

This theorem might be of independent interest as it greatly improves the main result from~\cite{goerlich2016} for bounded degree trees. Note that given a graph $G$ with $d(G)>k-1$,
a standard argument\footnote{We iteratively remove from $G$ vertices of degree less than $\frac k2$. This will not affect the average degree, and result in the desired minimum degree, unless we end up removing all vertices. However, that cannot happen, as then $|E(G)|<\frac k2\cdot n\le d(G)\cdot  \frac{n}2$, a contradiction.} shows that $G$ has a subgraph of minimum degree  $\delta(G)\ge\frac k2$ that preserves the average degree. So, since in the Erd\H os-S\'os conjecture and all our theorems, we are looking for subgraphs, we may always assume that in addition to the average degree condition, $G$ fulfills a minimum degree condition. (In particular, this is assumed in Theorem~\ref{smalltheorem}.)

Given $\beta>0$, we say that a graph $H$ is $\beta$-bipartite  if there is a partition $V(H)=A\cup B$ such that $e(A)+e(B)\le\beta e(H).$

\begin{theorem}\label{smalltheorem}
For each $k, \Delta \in\mathbb N$ and each graph $G$  with  $d(G)>k-1$ and  $\delta(G)\ge \frac k2$ the following holds.
\begin{enumerate}[(a)]
\item If $k\ge 10^{6}$ and  $|G|\le (1+10^{-11})k$  then $G$ contains each tree $T\in\mathcal T(k,\frac{\sqrt k}{1000})$.
\item If $k\ge 8\Delta^2$ and $G=(A,B)$ is $\frac 1{50\Delta^2}$-bipartite with  $|A|,|B|\le (1+\frac 1{25\Delta^2})k$  then $G$ contains each tree $T\in\mathcal T(k,\Delta)$. 
\end{enumerate}
 \end{theorem}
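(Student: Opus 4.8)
Both parts will be proved by a common scheme, but separately, because the host graph has a different shape in each: in~(a) it is an almost complete graph on $\approx k$ vertices, in~(b) an almost complete balanced bipartite graph on $\approx 2k$ vertices. The plan for~(a) is to reserve a ``buffer'' consisting of leaves (or of short bare paths) of $T$, to embed the rest of $T$ greedily in breadth-first order, and then to complete the embedding using the still-unused vertices of $G$ via a single application of Hall's theorem. To set this up I would first record that, from $d(G)>k-1$ and $|G|=n\le(1+\epsilon)k$ with $\epsilon=10^{-11}$, one gets $n\ge k+1$ and that the complement $\bar G$ has fewer than $n(n-k)/2$ edges, hence is very sparse; calling a vertex \emph{bad} if it has at least $\sqrt\epsilon\,k$ non-neighbours, there are fewer than $\sqrt\epsilon\,k$ bad vertices, every other (\emph{good}) vertex has degree at least $n-1-\sqrt\epsilon\,k$, and deleting the bad vertices leaves an almost complete graph $G'$ on at least $k+1-\sqrt\epsilon\,k$ vertices. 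Since $|V(T)|=k+1$, embedding $T$ is essentially a near-spanning problem, so a plain greedy embedding would run out of room at the very end --- which is exactly the role of the buffer.

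To choose the buffer, I would use that suppressing the degree-$2$ vertices of $T$ turns it into a ``topological tree'' with at most $2\ell-3$ edges of total length $k$, where $\ell$ is the number of leaves of $T$; hence $T$ either has many leaves, or has many pairwise vertex-disjoint \emph{bare paths}, i.e.\ paths whose interior vertices all have degree $2$ in $T$. Fix a buffer size $b$ equal to a small multiple of $\sqrt\epsilon\,k$, so that $\Delta(T)=\sqrt k/1000\ll b\le k/2$ for $k$ large. If $T$ has at least $b$ leaves, reserve a set $L$ of $b$ of them (automatically at most $\Delta(T)$ of them share any parent); otherwise reserve a family $\mathcal P$ of $b$ vertex-disjoint bare paths of length $2$. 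Let $T^\circ$ be $T$ with $L$ deleted, respectively with each path of $\mathcal P$ contracted to a single edge; then $T^\circ$ is a tree on at most $k+1-b$ vertices. I would embed $T^\circ$ greedily in breadth-first order into $G'$: when branching out from an already embedded vertex $\phi(u)$, the images to avoid number at most $|V(T^\circ)|$ used vertices plus the at most $\sqrt\epsilon\,k$ non-neighbours of $\phi(u)$, which is strictly less than $\deg_{G'}(\phi(u))\ge n-1-2\sqrt\epsilon\,k$ since $b\gg3\sqrt\epsilon\,k$; so a free neighbour exists at each step.

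It remains to extend this embedding of $T^\circ$ to an embedding of $T$, using the at least $b$ vertices of $G$ that are still unused. In the leaf case this means placing the $b$ leaves of $L$ on distinct unused vertices so that each lands on a $G$-neighbour of the image of its parent, which is a perfect matching in an auxiliary bipartite graph; in the bare-path case one must route $b$ vertex-disjoint length-$2$ paths with prescribed endpoints through the unused vertices, again reducible to a matching. This last step is the crux and I expect it to be the main obstacle: the \emph{bad} vertices of $G$ (of degree only about $k/2$) must be used as images, and they clash with the ``heavy'' vertices of $T$ (those carrying up to $\sqrt k/1000$ reserved leaves, or several reserved bare paths), because an unused vertex lying outside the neighbourhoods of many distinct good images would itself need a large $\bar G$-degree, and such vertices are scarce. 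To make Hall's condition hold I would have to choose the buffer and the breadth-first order so that the heavy vertices of $T$ are embedded early --- while $G$ is still almost empty --- and so that the bad vertices of $G$ only ever receive reserved leaves whose parents are well spread out; arranging all of these constraints at once is where the small constants $10^{-11}$ and $1/1000$ are spent.

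For~(b) I would run the same scheme inside the almost complete bipartite graph $G=(A,B)$. One first checks, from $d(G)>k-1$, the $\tfrac1{50\Delta^2}$-bipartiteness, and the bounds on $|A|,|B|$, that $n\approx 2k$, that each of $|A|,|B|$ is within a factor $1\pm O(\Delta^{-2})$ of $k$, and that the bipartite part of $G$ is almost complete; the at most $\tfrac1{50\Delta^2}e(G)$ edges inside $A$ or $B$ can be discarded. Writing $X\supseteq Y$ for the bipartition classes of the (necessarily bipartite) tree $T$, the size bounds let one map $X$ into the larger of $A,B$ and $Y$ into the smaller with room to spare, and one then proceeds as before: a bipartition-respecting buffer of leaves or bare paths, a greedy embedding of $T^\circ$ alternating between $A$ and $B$, and a bipartite finishing matching. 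Beyond the obstacle above, the extra difficulty in~(b) is keeping both sides under control at once, in particular securing the finishing matching on whichever side of $G$ is spanned most tightly.
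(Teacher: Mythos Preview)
Your plan for part~(b) is correct in spirit but needlessly elaborate; the paper's argument is far simpler and you are missing the key observation that makes it so. With $\eps=\tfrac1{25\Delta^2}$ one checks (as you do) that $|A|,|B|=(1\pm O(\eps))k$ and that, after discarding the few vertices of low cross-degree, the remaining bipartite graph $H=(A',B')$ has $\delta(H)\ge(1-5\sqrt\eps)k=(1-\tfrac1\Delta)k$. Now the point you overlook: for any $T\in\mathcal T(k,\Delta)$ with colour classes $C,D$, Fact~\ref{thisisafact} gives $\min\{|C|,|D|\}\ge k/\Delta$, hence $\max\{|C|,|D|\}\le(1-\tfrac1\Delta)k$. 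So both sides of $T$ fit with slack $\Theta(k/\Delta)$ into $H$, and a plain greedy bipartite embedding (Lemma~\ref{lem:greedy3}) finishes immediately. No buffer, no Hall step, no near-spanning difficulty arises: part~(b) is not a tight embedding problem at all.

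For part~(a) your acknowledged ``crux'' is a genuine gap, not just a matter of bookkeeping. You embed $T^\circ$ entirely inside the good vertices $G'$, so at the matching stage every bad vertex of $G$ is still unused; when $n$ is close to $k+1$ the number of unused vertices is essentially $b$, and your matching must hit each bad vertex. But a bad vertex has degree only $\ge k/2$, so it may miss all of the $O(\sqrt\eps\,k)$ parent images; counting via $e(\bar G)<\eps k^2$ gives no useful bound here because $\Delta(T)=\sqrt k/1000$ makes the number of distinct parents as small as $O(\sqrt{\eps k})$. Nothing in a breadth-first greedy embedding forces the parent images to cover the neighbourhoods of the bad vertices, and ``embedding heavy vertices early'' does not address this.

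The paper handles~(a) by reversing your order: it first embeds a \emph{small} subtree $T^*$ of size $\Theta(\sqrt\eps\,k)$ (obtained from Lemma~\ref{cut_off}) and uses the bare-path/leaf dichotomy on $T^*$, not on $T$, to \emph{deliberately absorb} the low-degree vertices of $G$ during this phase --- either by routing bare paths of $T^*$ through them, or by first parking the parents of leaves of $T^*$ on a reserved set $Y'\subseteq Y$ of full-degree vertices and then hanging leaves into $X$. Only after the bad vertices are used does the bulk of $T$ get embedded, and this bulk step is not greedy-plus-Hall but an application of an external black box (Lemma~\ref{lem:mindeg1}) to the almost-complete graph spanned by a carefully chosen vertex $v^\star\in Y$ of degree $\ge k$ together with $|T'|-1$ of its unused neighbours. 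The specific choice of $v^\star$ (minimising $|N(v)\cap X|$ over $Y$) is what makes the accounting close.
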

As a third result we prove an approximate version of the Erd\H os--S\'os conjecture for trees with linearly bounded maximum degree and dense host graph; this was independently proved by Rozho\v{n}~\cite{rohzon}. 

\begin{theorem}\label{thm:es_app}
For all $\delta\in (0,1)$ there are $n_0\in\NN$ and $\gamma\in (0,1)$ such that for each $k$ and for each $n$-vertex graph $G$  with $n\ge n_0$ and $n\ge k\ge \delta n$  the following holds. If $G$ satisfies $d(G)\ge (1+\delta)k$, then $G$ contains every tree $T\in\mathcal T(k,\gamma k)$. 
\end{theorem}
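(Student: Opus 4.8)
The plan is to derive Theorem~\ref{thm:es_app} from Szemer\'edi's regularity lemma together with the regularity-based tree-embedding machinery of~\cite{BPS1}. The key point in the approximate regime is that we are free to choose $\gamma$ as small as we please in terms of $\delta$ --- in particular much smaller than the reciprocal of the number of clusters the regularity lemma produces --- so that the maximum degree of the tree is negligible compared with the cluster size, and no essentially new ideas beyond~\cite{BPS1} are needed to handle degrees that large.

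Fix $\delta$ and choose auxiliary parameters $d,\epsilon>0$ small in terms of $\delta$ (with $\epsilon$ also small in terms of $d$). First apply the elementary minimum-degree clean-up quoted after Theorem~\ref{smalltheorem}, so that we may assume $\delta(G)\ge\tfrac{(1+\delta)k}2$ while keeping $d(G)\ge(1+\delta)k$; this forces $|V(G)|\ge d(G)\ge(1+\delta)k$, and combined with $k\ge\delta n$ we get $(1+\delta)k\le|V(G)|\le k/\delta$. Apply the regularity lemma to $G$ with parameters $\epsilon,d$, obtaining clusters of size $m$ (their number $L$ lying between the lemma's bounds) and an exceptional set, and let $R$ be the reduced graph, with a pair of clusters forming an edge of $R$ precisely when it is $\epsilon$-regular of density at least $d$. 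Deleting from $G$ all edges inside clusters, incident to the exceptional set, or in irregular or sparse pairs destroys only a small fraction of $e(G)\ge\tfrac{(1+\delta)k|V(G)|}2$ (using $|V(G)|\le k/\delta$ to bound the contributions of the exceptional set and of the sparse pairs), and a short computation gives $d(R)>(1+\tfrac\delta2)\tfrac km$; a local version of the same estimate gives $\delta(R)>\tfrac k{2m}$.

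The combinatorial core is to find a large connected matching in $R$. Since $d(R)>D:=(1+\tfrac\delta2)\tfrac km$, some component $C$ of $R$ has average degree at least $D$, whence $|V(C)|\ge D+1$ and $e(C)\ge D|V(C)|/2$. I claim $\nu(C)\ge D/2$. If not, say $\nu(C)\le s<D/2$, then the Erd\H os--Gallai theorem on matchings gives $e(C)\le\max\{\binom{2s+1}2,\ \binom s2+s(|V(C)|-s)\}$. The second term is at most $s|V(C)|$, so with $e(C)\ge D|V(C)|/2$ we get $s\ge D/2$, a contradiction; the first term together with $e(C)\ge D|V(C)|/2\ge D(D+1)/2$ gives $(2s)^2+2s\ge D^2+D$ and hence $2s\ge D$, again a contradiction. (Here $D$ is large, since $k\ge\delta n_0$ with $n_0$ large and since the regularity lemma's lower bound on $L$ may be taken large.) A maximum matching $M$ of $C$ is a \emph{connected} matching, and $2m|M|\ge mD=(1+\tfrac\delta2)k$.

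It remains to embed $T$. Here one uses the embedding machinery of~\cite{BPS1}: given a connected matching $M$ in the reduced graph with $2m|M|\ge(1+\epsilon)k$, one first makes the pairs of $M$ super-regular by discarding at most $\epsilon m$ vertices from each relevant cluster (absorbed into the slack $(\tfrac\delta2-\epsilon)k$ of the matching), and then embeds $T$ subtree by subtree into the super-regular pairs of $M$, with the macro-structure of the decomposition following a spanning tree of $C$ so that the edges of $T$ joining different subtrees always run between adjacent clusters of $C$. This embeds every tree with $k$ edges and maximum degree at most $c(d)\cdot m$ for a suitable constant $c(d)>0$; since $m>k/L$ with $L$ bounded in terms of $\delta$, choosing $\gamma=\gamma(\delta)$ small enough ensures $\gamma k\le c(d)m$, and we are done. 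The crux of the argument is this last embedding step: while for bounded-degree (or mildly growing) trees it is essentially~\cite{BPS1}, one must check that all estimates survive when $\Delta(T)$ is a small but fixed fraction of the cluster size, which is where the super-regularity of the matching pairs is essential; the combinatorial extraction of the connected matching and the regularity bookkeeping are routine.
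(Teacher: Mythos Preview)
Your approach --- regularise, extract a large connected matching in the reduced graph via Erd\H{o}s--Gallai, then embed --- is natural, but the last step has a genuine gap. You defer the embedding to ``the machinery of~\cite{BPS1}'', asserting it handles trees with $\Delta(T)\le c(d)m$; however, the results from~\cite{BPS1} that this paper imports for connected-matching embeddings (Lemmas~\ref{emb:forest} and~\ref{lem:nonbipcon}) are stated for $T\in\mathcal T(k,\Delta)$ with~$\Delta$ a \emph{constant}, and the passage to maximum degree linear in~$k$ is precisely the new content of Section~\ref{sec:linear}. Conceding that ``one must check that all estimates survive'' and that this is ``the crux'' is not a proof of the crux.

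More concretely, a bare connected matching of the size you obtain is insufficient when the component is bipartite. Consider $G=K_{(1+\delta)k/2,\,b}$ with $b$ large: the reduced graph is bipartite, any matching saturates the small side, and your matching covers only about $(1+\delta)k/2$ vertices of~$G$ on each side. But a tree $T\in\mathcal T(k,\gamma k)$ can have one colour class of size close to~$k$ (take a caterpillar with $\gamma k-2$ leaves hanging from every other spine vertex). Your scheme maps each colour class of~$T$ into one side of the bipartition, so the large class cannot fit inside the $(1+\delta)k/2$ vertices the matching provides on that side; you would have to use clusters outside the matching, which your super-regular pairs do not reach. The paper deals with exactly this obstruction via the richer structure of Lemma~\ref{lem:structure}: alongside two matchings one keeps a bipartite piece $\mathscr H=(\mathscr A,\mathscr B)$ whose $\mathscr A$-clusters have large degree into~$\mathscr B$, and the unbalanced part of~$T$ is routed there (see the case analysis in the proof of Lemma~\ref{prop:connected_linear}).

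Finally, the paper's own proof of Theorem~\ref{thm:es_app} is structurally different from yours. It is a short dichotomy: after passing to $G'$ with $\delta(G')\ge(1+\delta)\tfrac k2$, either at least $\delta^{12}|G'|$ vertices have degree $\ge(1+\delta^{12})k$, and one invokes Lemma~\ref{prop:connected_linear} (which carries the real work, via the structure above), or else the concentration estimate of Lemma~\ref{lem:con} forces almost every vertex to have degree $\ge(1+\tfrac\delta2)k$, whereupon one deletes the few exceptional vertices and embeds~$T$ \emph{greedily}. Your Erd\H{o}s--Gallai matching extraction is correct, but it does not replace the missing embedding argument.
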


Finally, let us briefly mention a well-known consequence of the Erd\H os--S\'os conjecture in Ramsey theory. Given an integer $\ell\ge 2$ and a graph $H$, the $\ell$-colour Ramsey number $r_\ell(H)$ of $H$ is the smallest $n\in\mathbb N$ such that every $\ell$-colouring of the edges of $K_n$ yields a monochromatic copy of $H$. In 1973, Erd\H os and Graham conjectured~\cite{ErdosGraham} that every tree $T$ with $k$ edges satisfies
	\begin{equation}\label{ramsey:trees} r_\ell(T)=\ell(k+1)+O(1),\end{equation}
	and they established the lower bound $r_\ell(T)>\ell(k+1)+1$ for large enough $\ell$ satisfying $\ell\equiv1\mod k$. Erd\H os and Graham also observed that the upper bound in~\eqref{ramsey:trees} would follow from the Erd\H os--S\'os conjecture. Indeed, for $n\ge \ell(k-1)+2$  note that the most popular colour in any $\ell$-colouring of $K_n$ has at least $\frac{1}{\ell}\binom{n}2$ edges and thus average degree at least $\frac{n-1}{\ell}>k-1$. So the Erd\H os--S\'os conjecture would imply that the most popular colour contains a copy of every tree with $k$ edges. Therefore, from Theorem~\ref{thm:main} we deduce the following result.
	\begin{corollary}\label{cor:Ramsey}
	For all $\ell\ge 2$, $\Delta\in\mathbb N$ there exists $k_0\in\mathbb N$ such that for every $k\ge k_0$ and every tree $T\in\mathcal T(k,\Delta)$ we have
		$r_\ell(T)\le \ell(k-1)+2.$
\end{corollary}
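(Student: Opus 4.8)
The plan is to deduce this directly from Theorem~\ref{thm:main} via the pigeonhole argument already sketched in the paragraph preceding the corollary. Fix $\ell\ge 2$ and $\Delta\in\mathbb N$. First I would set the density parameter $\delta:=\tfrac1{2\ell}$ and apply Theorem~\ref{thm:main} with this $\delta$ and the given $\Delta$ to obtain a threshold $n_0\in\mathbb N$. Then I would put $k_0:=n_0$.

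Now fix $k\ge k_0$ and a tree $T\in\mathcal T(k,\Delta)$, and set $n:=\ell(k-1)+2$. The first step is to check that the pair $(n,k)$ meets the hypotheses of Theorem~\ref{thm:main}. Since $\ell\ge 2$ we have $n=\ell(k-1)+2\ge 2(k-1)+2=2k\ge k\ge k_0=n_0$, so both $n\ge n_0$ and $n\ge k$ hold; moreover $\delta n=\tfrac1{2\ell}\bigl(\ell(k-1)+2\bigr)=\tfrac{k-1}{2}+\tfrac1\ell\le k$, so $k\ge\delta n$ as well.

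The second step is the colouring argument. Given any $\ell$-colouring of the edges of $K_n$, let $G$ be the spanning subgraph of $K_n$ consisting of the edges in the most frequently used colour. Then $e(G)\ge\tfrac1\ell\binom n2$, so the average degree of $G$ satisfies $d(G)=\tfrac{2e(G)}{n}\ge\tfrac{n-1}{\ell}=(k-1)+\tfrac1\ell>k-1$. By Theorem~\ref{thm:main}, $G$ contains a copy of $T$, which is a monochromatic copy of $T$ in the given colouring. Since the colouring was arbitrary, $r_\ell(T)\le n=\ell(k-1)+2$, as required.

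There is no real obstacle here: the whole difficulty is already contained in Theorem~\ref{thm:main}, and the only point that needs a moment's care is to choose $\delta$ small enough as a function of $\ell$ so that the pigeonhole graph $G$, which lives on $n\approx\ell k$ vertices, still satisfies the linear-size condition $k\ge\delta n$ required to invoke Theorem~\ref{thm:main}.
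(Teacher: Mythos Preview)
Your proof is correct and follows exactly the pigeonhole argument the paper sketches in the paragraph preceding the corollary; you have simply filled in the routine verification that the hypotheses $n\ge n_0$, $n\ge k$ and $k\ge\delta n$ of Theorem~\ref{thm:main} are met for the choice $\delta=\tfrac1{2\ell}$.
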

We remark that in Corollary~\ref{cor:Ramsey} one can actually find a copy of every tree $T\in\mathcal T(k,\Delta)$ in the same colour, at the same time.

The paper is organised as follows.  After some preliminaries in Section~\ref{sec:prelim}, we prove Theorem~\ref{thm:main}  in Section~\ref{sec:smaaall}. That Section also contains the proof of Theorem~\ref{smalltheorem}, more precisely, Theorem~\ref{smalltheorem} follows directly from Propositions~\ref{prop:small:bip} and~\ref{prop:small} stated and proved in that section.
We finally prove  Theorem~\ref{thm:es_app} in Section~\ref{sec:linear}.

\section{Preliminaries}\label{sec:prelim}
\subsection{Notation}
For $\ell\in\mathbb{N}$, we write $[\ell]$ for the discrete interval $\{1,\ldots,\ell\}$. We write $a\ll b$ to indicate that  given a constant $b$, constant $a$ is chosen significantly smaller. The explicit value for such~$a$ can be calculated from the proofs. Also, we write $a=b\pm c$ if $a\in[b-c,b+c]$.

Given a graph $H$, write $|H|=|V(H)|$  and $e(H)=|E(H)|$. Let $\delta(H)$, $d(H)$ and $\Delta(H)$ denote the minimum, average and maximum degree of $H$, respectively.  As usual, $\deg_H(x)$ denotes the degree of a vertex $x\in V(H)$, and we write $N_H(x)$ for its neighbourhood in $H$,   $N_H(x,S)=N_H(x)\cap S$ for its neighbourhood in $S\subseteq V(H)$ and $\deg_H(x,S)$ for the respective degree. For two sets $X,Y\subseteq V(H)$, we write $E_H(X,Y)$ for the family of edges $xy\in E(H)$ with $x\in X$ and $y\in Y$ and set $e_H(X,Y):=|E_H(X,Y)|$. Note that edges lying in the intersection of $X$ and $Y$ are counted twice.
In all of the above, we omit the subscript~$H$ if it is clear from the context. 
Given $U\subset V(H)$ we write $H[U]$ for the graph induced in~$H$ by the vertices in $U$, and we say a vertex $x$ {\em sees} $U$ if it has at least one neighbour in $U$. 

Given a collection of sets $\mathcal{F}$, we write $\bigcup \mathcal{F}$ for the union of all members of $\mathcal{F}$. If $\mathcal G$ is a collection of graphs, then $\bigcup\mathcal G$ denotes the graph which is the union of all graphs in $\mathcal G$.

\subsection{Regularity Lemma}\label{sec:regul}
Let us fix two parameters $\eps,\eta\in(0,1)$. Let $H=(A,B;E)$ be a bipartite graph with density $d(A,B):=\frac{e(A,B)}{|A||B|}$. We say that the pair $(A,B)$ is {\em $\varepsilon$-regular} if 
$$|d(X,Y)-d(A,B)|<\varepsilon$$
for all $X\subseteq A$ and $Y\subseteq B$, with $|X|>\varepsilon|A|$ and $|Y|> \varepsilon |B|$.
Furthermore, we say that $(A,B)$ is $(\eps,\eta)$-regular if $(A,B)$ is $\eps$-regular and $d(A,B)\ge\eta$. Given an $\varepsilon$-regular pair $(A,B)$, with density $d$, we say that a subset $X\subseteq A$ is \textit{$\varepsilon$-significant} if $|X|> \varepsilon |A|$ (analogously for subsets of $B$). A vertex $x\in A$ is called \textit{$\varepsilon$-typical} to a significant set $Y\subseteq B$ if $\deg(x,Y)> (d-\varepsilon)|Y|$, and similar for a vertex $x\in B$. We will write just \textit{regular}, \textit{significant} or \textit{typical} if $\varepsilon$ is clear from the context.

Regular pairs behave like a typical random graph of the same edge density. For instance, almost every vertex is typical to any given significant set, and regularity is inherited by subpairs. Let us state these well-known facts in a precise form (see~\cite{regu} for a proof). 
\begin{fact}\label{fact:1}Let $(A,B)$ be an $\varepsilon$-regular pair with density $d$. Then the following holds:
	\begin{enumerate}[(i)]
		\item For any $\varepsilon$-significant $Y\subseteq B$, all but at most $\varepsilon|A|$ vertices from $A$ are $\varepsilon$-typical to $Y$.
		\item\label{fact:1,2} Let $\alpha\in (0,1)$. For any subsets $X\subseteq A$ and $Y\subseteq B$, with $|X|\ge\alpha|A|$ and $|Y|\ge\alpha|B|$, the pair $(X,Y)$ is $\frac{2\varepsilon}{\alpha}$-regular with density $d\pm\varepsilon$.
	\end{enumerate}
\end{fact}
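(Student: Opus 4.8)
The plan is to derive both parts directly from the definition of $\varepsilon$-regularity by short counting arguments; since these are textbook facts about regular pairs, there is no substantive obstacle, and the write-up is mostly a matter of keeping the quantifiers straight.

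For part (i) I would argue by contradiction. Write $d:=d(A,B)$ and let $X\subseteq A$ be the set of vertices that fail to be $\varepsilon$-typical to $Y$, i.e.\ $\deg(x,Y)\le (d-\varepsilon)|Y|$ for every $x\in X$. Suppose $|X|>\varepsilon|A|$. Then $X$ is $\varepsilon$-significant, and $Y$ is $\varepsilon$-significant by hypothesis, so $\varepsilon$-regularity of $(A,B)$ gives $|d(X,Y)-d|<\varepsilon$. On the other hand, summing the degree bound over $x\in X$ yields $e(X,Y)\le (d-\varepsilon)|Y|\,|X|$, hence $d(X,Y)\le d-\varepsilon$, contradicting the previous inequality. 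Therefore $|X|\le\varepsilon|A|$, as claimed.

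For part (ii) I would first observe that one may assume $\alpha>\varepsilon$, since otherwise $\frac{2\varepsilon}{\alpha}\ge 2>1$ and there are no subsets $X'\subseteq X$ with $|X'|>\frac{2\varepsilon}{\alpha}|X|$, so the regularity requirement is vacuous (and the density statement for such small sets is not asserted in a meaningful way). Assuming $\alpha>\varepsilon$, from $|X|\ge\alpha|A|>\varepsilon|A|$ and $|Y|\ge\alpha|B|>\varepsilon|B|$ the $\varepsilon$-regularity of $(A,B)$ immediately gives $d(X,Y)=d\pm\varepsilon$, which is the density claim. For the regularity claim, take any $X'\subseteq X$ and $Y'\subseteq Y$ with $|X'|>\frac{2\varepsilon}{\alpha}|X|$ and $|Y'|>\frac{2\varepsilon}{\alpha}|Y|$. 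Then $|X'|>\frac{2\varepsilon}{\alpha}\cdot\alpha|A|=2\varepsilon|A|>\varepsilon|A|$, and likewise $|Y'|>\varepsilon|B|$, so $X'$ and $Y'$ are $\varepsilon$-significant in $A$ and $B$ respectively. Applying $\varepsilon$-regularity of $(A,B)$ to both pairs $(X',Y')$ and $(X,Y)$ and combining via the triangle inequality gives $|d(X',Y')-d(X,Y)|<2\varepsilon<\frac{2\varepsilon}{\alpha}$, the last step using $\alpha<1$. Hence $(X,Y)$ is $\frac{2\varepsilon}{\alpha}$-regular, which completes the proof. The only point needing a moment's care, rather than a genuinely hard step, is tracking which sets are significant in which ground set when invoking regularity of $(A,B)$, together with the mild case split on whether $\alpha$ exceeds $\varepsilon$.
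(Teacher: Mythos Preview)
Your argument is the standard textbook proof and is correct. The paper itself does not prove this fact; it simply cites a survey (\cite{regu}) and moves on, so there is nothing to compare against beyond noting that your write-up is exactly the kind of proof one finds in such references.

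One minor remark on your case split in (ii): when $\alpha\le\varepsilon$ the regularity claim is indeed vacuous, but the density assertion $d(X,Y)=d\pm\varepsilon$ is still a genuine claim and can fail if $|X|$ or $|Y|$ happens to lie in $[\alpha|A|,\varepsilon|A|]$. This is a defect of the statement rather than of your proof, and in every application in the paper one has $\alpha\gg\varepsilon$, so no harm is done; but strictly speaking your parenthetical dismissal of that case is slightly too casual.
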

\noindent
Given a graph $G$, we say that a vertex partition $V(G)=V_1\cup\ldots\cup V_\ell$ is $(\varepsilon,\eta)$-regular if 
\begin{enumerate}
	\item $|V_1|=|V_2|=\ldots=|V_\ell|$;
	\item  $V_i$ is independent for all $i\in[\ell]$; and
	\item for all $1\le i<j\le \ell$, the pair $(V_i,V_j)$ is $\varepsilon$-regular with density either $d(V_i,V_j)\ge\eta$ or $d(V_i,V_j)=0$.
\end{enumerate}
Szemer\'edi's regularity lemma~\cite{Sze78} states that every large graph has an almost spanning subgraph that admits a regular partition. We will use the following version (see for instance~\cite{regu}).
\begin{lemma}[Regularity lemma]\label{reg:deg}
	For all $\varepsilon>0$ and $m_0\in\NN$ there are $N_0, M_0$ such that the following holds for all $\eta\in[0,1]$ and $n\ge N_0$. Any $n$-vertex graph $G$ has a subgraph $G'$, with $|G|- |G'|\le \varepsilon n$ and $\deg_{G'}(x)\ge \deg_G(x)-(\eta+\varepsilon)n$ for all $x\in V(G')$, such that $G'$ admits an $(\varepsilon,\eta)$-regular partition $V(G')=V_1\cup\ldots\cup V_\ell$, with $m_0\le \ell\le M_0$.
\end{lemma}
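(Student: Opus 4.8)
The plan is to deduce this degree form from the more basic \emph{partition form} of Szemer\'edi's regularity lemma~\cite{Sze78} (see also~\cite{regu}): for every $\varepsilon'>0$ and $m_0'\in\NN$ there are $N_0,M_0$ so that every graph on $n\ge N_0$ vertices admits a partition $V(G)=V_0'\cup V_1\cup\dots\cup V_\ell$ with $m_0'\le\ell\le M_0$, $|V_0'|\le\varepsilon' n$, $|V_1|=\dots=|V_\ell|=:L$, and at most $\varepsilon'\ell^2$ of the pairs $(V_i,V_j)$ failing to be $\varepsilon'$-regular. I would apply this with $m_0'=\max\{2m_0,\lceil1/\varepsilon'\rceil\}$ and with $\varepsilon'$ chosen so small (for instance $\varepsilon'=(\varepsilon/20)^2$) that $\tfrac{2\varepsilon'}{1-\sqrt{\varepsilon'}}<\varepsilon$; note that then $L\le n/m_0'\le\varepsilon' n$. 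The desired $G'$ will be obtained from $G$ by discarding an enlarged exceptional set, all edges inside clusters, and all edges spanned by ``bad'' pairs.

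The cleanup uses two pigeonhole steps. First, since there are at most $\varepsilon'\ell^2$ non-$\varepsilon'$-regular pairs, fewer than $2\sqrt{\varepsilon'}\ell$ clusters lie in more than $\sqrt{\varepsilon'}\ell$ of them; call these clusters \emph{bad}. Second, for a non-bad cluster $V_i$ and a regular pair $(V_i,V_j)$ of density $d_{ij}<\eta+\varepsilon'$, it follows directly from the definition of an $\varepsilon'$-regular pair that at most $\varepsilon' L$ vertices of $V_i$ have $\deg_G(\cdot,V_j)\ge(d_{ij}+\varepsilon')|V_j|$; summing over $j$ and applying Markov, in each non-bad cluster all but at most $\sqrt{\varepsilon'}L$ vertices are \emph{atypical} to at most $\sqrt{\varepsilon'}\ell$ such low-density pairs. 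Now move into an exceptional set $V_0$ all bad clusters together with, from every non-bad cluster, a uniform set of $t:=\lceil\sqrt{\varepsilon'}L\rceil$ vertices including all its atypical-to-many vertices; write $V_1',\dots,V_{\ell'}'$ for the remaining, trimmed clusters, which all have size $L'=L-t\ge(1-\sqrt{\varepsilon'})L$. Finally let $G'$ be the spanning subgraph of $G[V_1'\cup\dots\cup V_{\ell'}']$ obtained by deleting every edge inside some $V_i'$ and every edge between $V_i'$ and $V_j'$ for which $(V_i,V_j)$ is not $\varepsilon'$-regular or has $d_{ij}<\eta+\varepsilon'$.

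It then remains to verify the three conclusions, which amounts to routine bookkeeping. The sets $V_1',\dots,V_{\ell'}'$ are independent in $G'$, have equal size, and satisfy $m_0\le m_0'/2\le\ell'\le\ell\le M_0$; and for each retained pair, $G'[V_i',V_j']=G[V_i',V_j']$ is, by Fact~\ref{fact:1}(ii) with $\alpha=L'/L\ge1-\sqrt{\varepsilon'}$, a $\tfrac{2\varepsilon'}{1-\sqrt{\varepsilon'}}$-regular (hence $\varepsilon$-regular) pair of density $d_{ij}\pm\varepsilon'\ge\eta$, while deleted pairs have density $0$ in $G'$, so $V(G')=V_1'\cup\dots\cup V_{\ell'}'$ is an $(\varepsilon,\eta)$-regular partition. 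Also $|G|-|G'|=|V_0|\le\varepsilon' n+2\sqrt{\varepsilon'}\ell L+\ell t\le(\varepsilon'+O(\sqrt{\varepsilon'}))n<\varepsilon n$. For the degrees, fix $x\in V_i'$ and classify the edges of $G$ at $x$ missing from $G'$: those into $V_0$ (at most $|V_0|$), those staying inside $V_i$ (at most $L\le\varepsilon' n$), those into non-regular pairs at $V_i$ (at most $\sqrt{\varepsilon'}\ell L=\sqrt{\varepsilon'}n$, since $V_i$ is non-bad), and those into low-density retained-index pairs $(V_i,V_j)$; for the last type one loses at most $L$ per pair for the at most $\sqrt{\varepsilon'}\ell$ pairs to which $x$ is atypical, and at most $(d_{ij}+\varepsilon')|V_j|$ otherwise, the latter summing to at most $(\eta+2\varepsilon')n$. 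Altogether $\deg_G(x)-\deg_{G'}(x)\le\eta n+O(\sqrt{\varepsilon'})n\le(\eta+\varepsilon)n$.

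The only real obstacle is that the ``$\eta n$'' slack must hold for \emph{every} surviving vertex, whereas the partition form controls irregular pairs only in total and, a priori, a single vertex could send many edges into a low-density regular pair; both phenomena are neutralised by the two pigeonhole steps that exile the few offending clusters and vertices into $V_0$, and the remaining work is just checking that this enlarged $V_0$ stays below $\varepsilon n$ and that the barely-trimmed clusters still inherit $\varepsilon$-regularity via Fact~\ref{fact:1}(ii).
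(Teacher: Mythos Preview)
Your derivation is correct and is essentially the standard way of obtaining the degree form from the partition form of Szemer\'edi's regularity lemma; the two pigeonhole steps (first on clusters, then on vertices within each surviving cluster) are exactly what is needed to make the degree loss uniform, and the bookkeeping with $\varepsilon'=(\varepsilon/20)^2$ works. The paper itself does not prove Lemma~\ref{reg:deg}: it is quoted as a known variant with a pointer to the survey~\cite{regu}, where precisely this argument can be found. So there is nothing to compare---you have simply supplied the proof the paper takes for granted.
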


The {\em $(\varepsilon,\eta)$-reduced graph} $\mathscr R$ corresponding to the $(\varepsilon,\eta)$-regular partition that is given by Lemma~\ref{reg:deg} has vertex set $V(\mathscr R)=\{V_i:i\in[\ell]\}$, called \textit{clusters}, and  an edge $V_iV_j$ for each $i,j$ with $d(V_i,V_j)\ge\eta$. We use calligraphic letters to refer to the reduced graph, or to subsets of its vertex set. Moreover, given $\mathscr C\subseteq V(\mathscr R)$, we write $|\mathscr C|$ for the number of clusters in $\mathscr C$. In contrast, we write $|\bigcup \mathscr C|$ for the number of vertices of the subgraph $\bigcup \mathscr C$ of $G$. Now we state some useful facts about the reduced graph (see~\cite{regu} for a proof). 

\begin{fact}\label{fact:3}Let $G$ be a $n$-vertex graph and let $\mathscr R$ be an $(\eps,\eta)$-reduced graph of $G$. Then the following holds.
	\begin{enumerate}[(i)]
		\item\label{fact:3,1}Given a cluster $C\in V(\mathscr R)$ we have
		\[\deg_{\mathscr R}(C)\ge \frac{1}{|C|}\sum_{v\in C}\deg(v)\cdot\frac{|\mathscr R|}n. \]
		In particular, summing over all clusters we have $d(\mathscr R)\ge d(G)\cdot \frac{|\mathscr R|}{n}$.
	\item\label{fact:3,2} Let $\mathscr{Y}$ be a collection of significant sets of clusters in $\mathscr R$ and let $C\in V(\mathscr R)$. Then 
	\[|\{Y\in \mathscr{Y}: v\text{ is typical to }Y\}|\ge (1-\sqrt{\eps})|\mathscr{Y}|\]
	for all but at most $\sqrt{\eps}|C|$ vertices $v\in C$.
	\end{enumerate}
\end{fact}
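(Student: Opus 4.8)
The plan is to obtain both parts by elementary counting, using only the defining properties of a regular partition together with Fact~\ref{fact:1}. Throughout I would write $G'$ for the subgraph carrying the $(\eps,\eta)$-regular partition underlying $\mathscr R$ (the one produced by Lemma~\ref{reg:deg}, which omits at most $\eps n$ vertices of $G$), so that all clusters share one size $c=|C|$ with $c\cdot|\mathscr R|=|V(G')|\le n$, and the degrees in the statement are taken in $G'$.

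For the first inequality I would fix a cluster $C$ and a vertex $v\in C$ and bound $\deg_{G'}(v)$ by asking which clusters can meet $N_{G'}(v)$. Since each cluster is independent, $v$ has no $G'$-neighbour inside $C$; and since any two distinct clusters span a pair of density $0$ or at least $\eta$, a cluster $C'\ne C$ meeting $N_{G'}(v)$ has $d(C,C')\ge\eta$, i.e.\ $CC'\in E(\mathscr R)$. There are exactly $\deg_{\mathscr R}(C)$ such clusters, each of size $c$, hence $\deg_{G'}(v)\le c\cdot\deg_{\mathscr R}(C)$. Summing over $v\in C$ and using $c=|C|\le n/|\mathscr R|$ gives $\deg_{\mathscr R}(C)\ge c^{-2}\sum_{v\in C}\deg_{G'}(v)\ge |C|^{-1}\sum_{v\in C}\deg_{G'}(v)\cdot|\mathscr R|/n$. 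The ``in particular'' part then follows by summing this bound over all clusters and dividing by $|\mathscr R|$, since $\sum_{C}c^{-1}\sum_{v\in C}\deg_{G'}(v)=|\mathscr R|\,d(G')$ and $|V(G')|\le n$ (if one insists on $d(G)$ rather than $d(G')$ here, one additionally invokes the degree bound $\deg_{G'}(x)\ge\deg_G(x)-(\eta+\eps)n$ of Lemma~\ref{reg:deg}).

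For the second statement the idea is a double count followed by Markov's inequality. Fix $C$ and set $m=|\mathscr Y|$. Each $Y\in\mathscr Y$ is a significant subset of a cluster $C_Y$, and $(C,C_Y)$ is $\eps$-regular, so Fact~\ref{fact:1}(i) applied with $A=C$ and $B=C_Y$ shows that all but at most $\eps c$ vertices of $C$ are typical to $Y$. Summing these ``bad'' sets over $Y\in\mathscr Y$, the number of pairs $(v,Y)$ with $v\in C$ not typical to $Y$ is at most $\eps c m$. By Markov's inequality, the number of vertices $v\in C$ that fail to be typical to more than $\sqrt\eps\,m$ of the sets of $\mathscr Y$ is at most $\eps c m/(\sqrt\eps\,m)=\sqrt\eps\,c=\sqrt\eps\,|C|$; every other vertex of $C$ is typical to at least $(1-\sqrt\eps)|\mathscr Y|$ members of $\mathscr Y$, which is the claim.

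I do not expect a genuine obstacle: both parts are folklore about reduced graphs and the computations are immediate. The only points needing (minor) care are bookkeeping ones — keeping straight that $\mathscr R$ lives on the cleaned subgraph $G'$ (so $|V(G')|\le n$, which is exactly what turns the exact counts into the stated inequalities involving $|\mathscr R|/n$), and observing that Fact~\ref{fact:1}(i) may be applied to $(C,C_Y)$ even when $d(C,C_Y)=0$, in which degenerate case the pair is vacuously $\eps$-regular and every vertex of $C$ is vacuously typical to $Y$, so the bound holds trivially.
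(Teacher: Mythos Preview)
Your proposal is correct and is the standard folklore argument. The paper itself does not prove this fact but merely cites a survey (\cite{regu}); your double count for~(i) and the Fact~\ref{fact:1}(i)-plus-Markov averaging for~(ii) are exactly the expected proofs, and you have correctly flagged the only bookkeeping subtleties (that $c\cdot|\mathscr R|\le n$, and the degenerate zero-density case).
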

We close this subsection with a well-known lemma that illustrates why regularity is so useful for embedding trees. It states that a tree will always fit into a regular pair, if the  tree is small enough (but it may still be linear in the size of the pair). A proof can be found for instance in~\cite{AKS,BPS1}.
\begin{lemma}\label{lem:T1}
	Let $0<\beta\le\varepsilon\le \tfrac{1}{25}$. Let $(A,B)$ be a $(\varepsilon,5\sqrt{\varepsilon})$-regular pair with $|A|=|B|=m$, and let $X\subseteq A, Y\subseteq B, Z\subseteq A\cup B$ be such that $\min\{|X\setminus Z|, |Y\setminus Z|\}>\sqrt \varepsilon m$. \\
	Then any tree $T$ on at most $\beta m$ vertices can be embedded into $(X\cup Y)\setminus Z$.
	Moreover, for each $v\in V(T)$ there are at least $2\varepsilon m$ vertices from $(X\cup Y)\setminus Z$ that can be chosen as the image of $v$.
\end{lemma}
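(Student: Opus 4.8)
The plan is to build the embedding greedily, one vertex at a time, following a breadth-first search order of $T$. First I would fix an arbitrary root $r$ of $T$ and enumerate $V(T)=\{v_1,\dots,v_t\}$ with $t\le\beta m$ and $v_1=r$, so that the parent of $v_i$ in $T$ precedes $v_i$ for every $i\ge 2$. Since $T$ is bipartite, fix a proper $2$-colouring of $T$ and agree to send one colour class into $X\setminus Z$ and the other into $Y\setminus Z$. Write $d:=d(A,B)\ge 5\sqrt\varepsilon$. Because $\varepsilon\le\tfrac1{25}<1$ we have $|X\setminus Z|,|Y\setminus Z|>\sqrt\varepsilon m>\varepsilon m$, so both $X\setminus Z$ and $Y\setminus Z$ are $\varepsilon$-significant and the $\varepsilon$-regularity of $(A,B)$ applies to them.

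The embedding will be maintained so as to satisfy the invariant that every already-embedded vertex $v_j$ is mapped to a vertex $\varphi(v_j)$ which is \emph{typical} to the significant set on the opposite side --- i.e.\ typical to $Y\setminus Z$ if $\varphi(v_j)\in A$, and typical to $X\setminus Z$ if $\varphi(v_j)\in B$. For the root I would use Fact~\ref{fact:1}(i): all but at most $\varepsilon m$ vertices of $A$ are typical to $Y\setminus Z$, so at least $|X\setminus Z|-\varepsilon m>\sqrt\varepsilon m-\varepsilon m>2\varepsilon m$ vertices of $X\setminus Z$ are admissible images for $v_1$ (the last inequality since $\sqrt\varepsilon\le\tfrac15<\tfrac13$). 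For the inductive step, let $i\ge2$, suppose the parent $v_j$ of $v_i$ is already embedded to $x:=\varphi(v_j)$, and treat the case $x\in A$ (the other is symmetric). By the invariant $x$ is typical to $Y\setminus Z$, so
\[
  |N(x)\cap(Y\setminus Z)|\;>\;(d-\varepsilon)\,|Y\setminus Z|\;>\;4\varepsilon m,
\]
the second inequality using $d\ge5\sqrt\varepsilon$, $|Y\setminus Z|>\sqrt\varepsilon m$ and $\varepsilon\le\sqrt\varepsilon$. Of these vertices, at most $i-1<\beta m\le\varepsilon m$ have already been used as images, and by Fact~\ref{fact:1}(i) applied to $X\setminus Z\subseteq A$ at most $\varepsilon m$ of them fail to be typical to $X\setminus Z$. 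Hence more than $4\varepsilon m-\varepsilon m-\varepsilon m=2\varepsilon m$ vertices of $N(x)\cap(Y\setminus Z)$ are simultaneously unused and typical to $X\setminus Z$; choosing any one of them as $\varphi(v_i)$ extends the embedding and restores the invariant. This yields the embedding of $T$ into $(X\cup Y)\setminus Z$, and since at every vertex of $T$ at least $2\varepsilon m$ admissible images were available, the ``moreover'' statement drops out simultaneously.

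I do not expect a genuine obstacle here: this is the standard greedy tree-embedding into a regular pair (as in the references cited just before the lemma). The only point to watch is the arithmetic --- the density parameter $5\sqrt\varepsilon$ is chosen precisely so that the typical neighbourhood of an already-embedded vertex has size more than $4\varepsilon m$, which is exactly enough to absorb the deletion of the at most $\varepsilon m$ used images and the at most $\varepsilon m$ atypical vertices while still leaving the required $2\varepsilon m$ choices, and this is also where the hypotheses $\beta\le\varepsilon\le\tfrac1{25}$ are used. The remaining details (the base case for the root and the bookkeeping of the two colour classes of $T$) are routine.
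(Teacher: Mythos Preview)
Your argument is correct and is precisely the standard greedy embedding into a regular pair that the paper has in mind; note that the paper does not give its own proof of this lemma but refers to~\cite{AKS,BPS1}, where essentially the same argument appears. The arithmetic checks out: typicality gives more than $(5\sqrt\varepsilon-\varepsilon)\sqrt\varepsilon m\ge 4\varepsilon m$ candidate neighbours, and after discarding fewer than $\beta m\le\varepsilon m$ used vertices and at most $\varepsilon m$ atypical ones you retain more than $2\varepsilon m$ choices, while for the root $\sqrt\varepsilon m-\varepsilon m>2\varepsilon m$ since $\sqrt\varepsilon\le\tfrac15$.
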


\subsection{Trees}\label{sec:trees}
Let us give some notation for trees. We will write $(T,r)$ for a tree $T$ rooted at $r\in V(T)$. Given any rooted tree $(T,r)$ and $x,y\in V(T)$, we say that $x$ is {\it below} $y$ (resp. $y$ is {\it above} $x$) if $y$ lies on the unique path from $x$ to $r$ (our trees grow from the top to the bottom). If in addition, $xy\in E(T)$,  we say $x$ is a {\it child} of $y$, and $y$ is the {\it parent} of $x$. 

The following lemma  allow us to find a cut vertex which splits the tree into connected components of convenient sizes. See~\cite{BPS1, 2k3:2016, RS19a} for other variants and a proof.

\begin{lemma}\label{cut_off}
	For all $0<\gamma\le 1$ and for all $k\ge \frac {200}\gamma$, any given tree $T$ with $k$ edges has a  subtree $(T^*,t^*)$ such that
	\begin{enumerate}[(i)]
		\item $\frac{\gamma k}2\le |V(T^*)|\le\gamma k$; and \label{Tstarsmall}
		\item every component of $T-T^*$  is adjacent to $t^*$.\label{compo}
	\end{enumerate}
	\end{lemma}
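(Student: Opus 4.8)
The plan is to find $T^*$ by a greedy descent from the root, stopping as soon as the subtree below the current vertex first drops below a suitable threshold. Root $T$ at an arbitrary vertex $r$, and for each $v\in V(T)$ let $T_v$ denote the subtree of $T$ consisting of $v$ and all vertices below it (so $|V(T_r)|=k+1$). Starting at $r$, repeatedly move to a child $v$ of the current vertex that maximises $|V(T_v)|$, and stop at the first vertex $t^*$ for which $|V(T_{t^*})|\le \gamma k$. Since $|V(T_r)|=k+1>\gamma k$ (as $\gamma\le 1\le k$), the descent does not stop immediately; since each step strictly decreases the size, it terminates; and when it stops, the parent $p$ of $t^*$ satisfies $|V(T_p)|>\gamma k$. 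I would then set $(T^*,t^*):=(T_{t^*},t^*)$.

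The upper bound $|V(T^*)|\le\gamma k$ holds by the stopping rule. For the lower bound, use that $t^*$ was chosen as a child of $p$ with $|V(T_v)|$ maximum among the children of $p$. The vertex $p$ has some number $d$ of children, and the subtrees hanging below them partition $V(T_p)\setminus\{p\}$, which has size $>\gamma k-1$. Hence the largest of these subtrees, namely $T_{t^*}$, has at least $\frac{\gamma k-1}{d}$ vertices; but this bound is only useful if $d$ is small. So instead I would argue directly: $d\le \Delta(T)$ need not be bounded here, so the cleaner route is to compare $|V(T_{t^*})|$ with $\frac12|V(T_p)|$. If $|V(T_{t^*})|<\frac{\gamma k}{2}$, then every child-subtree of $p$ has fewer than $\frac{\gamma k}{2}$ vertices; but that does not immediately contradict anything either. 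The correct standard fix is to choose the descent threshold more carefully: stop at the first vertex $t^*$ with $|V(T_{t^*})|\le\gamma k$, and observe that its parent $p$ has $|V(T_p)|>\gamma k$, so $|V(T_p)\setminus V(T_{t^*})|\ge 1$; combining with maximality, $|V(T_{t^*})|\ge\frac{1}{2}\big(|V(T_p)|-1\big)$ would need $p$ to have at most two children contributing, which fails in general. I will therefore instead track the running total: let $p$ have children $c_1,\dots,c_d$ with $|V(T_{c_1})|\ge\cdots\ge|V(T_{c_d})|$, so $t^*=c_1$. Since $|V(T_p)|=1+\sum_i |V(T_{c_i})|>\gamma k$ and each $|V(T_{c_i})|\le |V(T_{c_1})|\le\gamma k$, if additionally $|V(T_{c_1})|<\frac{\gamma k}{2}$ then peeling off $c_1$ and recursing inside $T_p$ (or rather, continuing the descent is wrong — one must re-root the search) still leaves a large subtree; iterating this peeling at $p$, after removing subtrees one at a time the remaining total stays above $\frac{\gamma k}{2}$ until some prefix $c_1,\dots,c_j$ has total in $[\frac{\gamma k}{2},\gamma k]$, because each removed piece has size $<\frac{\gamma k}{2}$ and we started above $\gamma k$. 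Then $T^*$ is taken to be $p$ together with $T_{c_1}\cup\cdots\cup T_{c_j}$ and the edges $pc_1,\dots,pc_j$; this is a subtree of size between $\frac{\gamma k}{2}$ and $\gamma k+1$, and adjusting constants (using $k\ge 200/\gamma$ to absorb the additive slack) gives (i).

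Property (ii) then follows by construction: every component of $T-T^*$ is either $T_{c_i}$ for some $i>j$ — which is attached to $T^*$ via the vertex $p=t^*$ — or it is the part of $T$ lying at or above $p$, which contains the parent of $p$ (if $p\ne r$), and that vertex is adjacent to $p=t^*$; if $p=r$ this second component is empty. In every case the component meets $N(t^*)$, as required. The main obstacle is purely the bookkeeping in the lower bound of (i): one must resist the temptation to take a single child-subtree and instead bundle together a prefix of child-subtrees of $p$ so that their combined size lands in the window $[\frac{\gamma k}{2},\gamma k]$, which works precisely because no single child-subtree is too large (each is $\le\gamma k$ by the stopping rule at the previous step, and if some child-subtree already exceeds $\frac{\gamma k}{2}$ we are immediately done). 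The hypothesis $k\ge 200/\gamma$ is exactly what lets the additive $+1$'s and the rounding be swallowed into the stated bounds.
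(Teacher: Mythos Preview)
The paper does not actually prove this lemma; it only cites \cite{BPS1, 2k3:2016, RS19a} for variants and a proof. So there is no in-paper argument to compare against.

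Your approach is the standard one and is essentially correct: descend from the root along children with large subtrees until you reach a vertex $p$ with $|V(T_p)|>\gamma k$ but every child-subtree of size at most $\gamma k$; if some child-subtree already has size at least $\tfrac{\gamma k}{2}$ take that subtree (rooted at the child), and otherwise bundle child-subtrees of $p$ together with $p$ itself until the running total lands in the target window, setting $t^*:=p$. Property~(ii) then falls out exactly as you describe.

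One point needs tightening. With your stated threshold you obtain $|V(T^*)|\le \gamma k+1$, and you claim the hypothesis $k\ge 200/\gamma$ ``absorbs the additive slack''. It does not: if $\gamma k\notin\mathbb Z$ you may genuinely end up with $|V(T^*)|=\lceil\gamma k\rceil>\gamma k$, and no largeness of $k$ repairs that. The fix is trivial but different from what you wrote: in the bundling step, let $j$ be minimal with $1+\sum_{i\le j}|V(T_{c_i})|\ge \tfrac{\gamma k}{2}$ (i.e.\ count the vertex $p$ from the start). Then the previous partial sum satisfies $1+\sum_{i<j}|V(T_{c_i})|<\tfrac{\gamma k}{2}$, and since each increment is $<\tfrac{\gamma k}{2}$ you get $|V(T^*)|=1+\sum_{i\le j}|V(T_{c_i})|<\gamma k$ on the nose. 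The hypothesis $k\ge 200/\gamma$ is only needed to guarantee $\gamma k\ge 200$, so that the target interval is nonempty and the various strict inequalities make sense.

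A final remark on exposition: your write-up reads as a sequence of attempts and self-corrections rather than a proof. Once you have settled on the two-case argument above (single large child-subtree versus bundled small child-subtrees at $p$), the whole thing is four or five clean sentences; the false starts (``but this bound is only useful if $d$ is small'', ``which fails in general'', etc.) should be deleted.
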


			

	
A \textit{bare path} in a tree is a path all whose internal vertices have degree  $2$ in the tree.  The next lemma has been extensively used in the literature of tree embeddings. It states that the structure of any given tree satisfies a certain dichotomy. Namely, each tree contains either a large number of leaves or a large number of bare paths of some fixed constant length (we refer to~\cite{Krivilevich, Montgomery} for a more general statement and a proof, and note that here,  the length of a path is its number of edges).
\begin{lemma}\label{bare}Let $\ell>2$ and let $T$ be a tree. Then either $T$ has at least $|T|/4\ell$ leaves or it has at least $|T|/4\ell$ vertex disjoint bare paths, each of length $\ell$. 
\end{lemma}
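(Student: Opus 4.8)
The plan is to prove the stated dichotomy directly: if $T$ has at least $|T|/4\ell$ leaves we are in the first case, so assume from now on that $T$ has fewer than $|T|/4\ell$ leaves, and let us produce the bare paths. Write $n=|T|$ (we may assume $n\ge 2$), let $L$ be the set of leaves of $T$ and $B$ the set of vertices of degree at least $3$. Splitting the identity $\sum_{v}\deg_T(v)=2(n-1)$ according to whether a vertex has degree $1$, exactly $2$, or at least $3$ gives, after a line of arithmetic, both $|B|\le|L|$ and $\sum_{v\in L\cup B}\deg_T(v)=2(|L|+|B|)-2$. Since $|L|<n/4\ell$, the first inequality yields $|L|+|B|<n/2\ell$; and since every tree on at least two vertices has at least two leaves, our assumption also forces $n>8\ell$.

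I would then look at the forest $F:=T-(L\cup B)$. Each of its vertices has degree exactly $2$ in $T$, so every component of $F$ is a path; write the components as $Q_1,\dots,Q_p$, where $Q_i$ has $c_i$ vertices, so $\sum_i c_i=n-|L|-|B|>n\bigl(1-\tfrac1{2\ell}\bigr)$. A second, equally easy edge count bounds $p$: each $Q_i$ sends exactly two edges of $T$ into $L\cup B$, and these $2p$ edges are distinct, whence $2p\le\sum_{v\in L\cup B}\deg_T(v)=2(|L|+|B|)-2<n/\ell$, that is, $p<n/2\ell$ and so $p\ell<n/2$.

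Finally, in each path $Q_i$ I would pick a maximum family of pairwise vertex-disjoint subpaths with exactly $\ell$ edges; since such a subpath spans $\ell+1$ vertices, there are $\lfloor c_i/(\ell+1)\rfloor\ge(c_i-\ell)/(\ell+1)$ of them. Every subpath chosen is a bare path of $T$ — its $\ell-1$ internal vertices belong to $Q_i$ and hence have degree $2$ in $T$ — and subpaths picked in different components $Q_i$ are automatically vertex-disjoint. Summing over $i$, and using $\sum_i c_i>n\bigl(1-\tfrac1{2\ell}\bigr)$ together with $p\ell<n/2$, the total number of bare paths obtained is at least
\[
\sum_{i=1}^{p}\frac{c_i-\ell}{\ell+1}=\frac{\sum_i c_i-p\ell}{\ell+1}>\frac{n\bigl(1-\tfrac1{2\ell}\bigr)-\tfrac n2}{\ell+1}=\frac{n(\ell-1)}{2\ell(\ell+1)}\ge\frac{n}{4\ell},
\]
the last inequality being precisely the hypothesis $\ell\ge 3$, rewritten as $2(\ell-1)\ge\ell+1$.

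Everything here is explicit, so I do not foresee a genuine difficulty; the argument is really just a couple of counting identities. The only point that needs a little attention is that the estimate in the last display is essentially tight at $\ell=3$: because vertex-disjointness forces one to pack blocks of $\ell+1$ (not $\ell$) vertices into each $Q_i$, and because a fraction of $T$ sits in $L\cup B$, a looser bookkeeping would fail for $\ell\in\{3,4\}$. Hence one must carry both $|L|+|B|<n/2\ell$ and $p\ell<n/2$ through the computation. The degenerate case $B=\varnothing$ (where $T$ is a path and $p\le 1$) needs no separate argument, since all the inequalities above hold trivially in it.
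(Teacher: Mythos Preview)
Your argument is correct. Note, however, that the paper does not give its own proof of this lemma; it simply cites \cite{Krivilevich, Montgomery} for the statement and proof. Your approach is exactly the standard counting argument found in those references: use the handshake identity to bound $|B|\le|L|$ and $\sum_{v\in L\cup B}\deg(v)=2(|L|+|B|)-2$, delete $L\cup B$ so that what remains is a union of paths whose number is controlled by the edges leaving them, and then chop each such path into vertex-disjoint length-$\ell$ pieces. The final inequality $2(\ell-1)\ge\ell+1$ is precisely where the hypothesis $\ell>2$ enters, as you observe.
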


Another well-known fact we shall use in our proof is the following. One can prove it by rooting the tree at any vertex in the smaller bipartition class, and comparing the number of vertices in a odd level to the number of vertices in the preceding level.
\begin{fact}\label{thisisafact}
Let $T$ be a tree with bipartition $V(T)=C\cup D$ and maximum degree $\Delta(T)\le \Delta$. Then 
$\min\{|C|,|D|\}\ge \frac k\Delta.$
\end{fact}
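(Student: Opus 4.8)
The plan is simply to count the edges of $T$ by summing degrees over one side of the bipartition. Recall that $T$ has $k$ edges. Assume without loss of generality that $|C|\le|D|$, so $\min\{|C|,|D|\}=|C|$. Since $C$ is an independent set (being one class of a bipartition of $T$), every edge of $T$ has exactly one endpoint in $C$, whence
\[
k=e(T)=\sum_{v\in C}\deg_T(v).
\]
Bounding each summand by the maximum degree hypothesis, $\deg_T(v)\le\Delta(T)\le\Delta$ for all $v\in C$, we get $k\le\Delta|C|$, i.e. $|C|\ge k/\Delta$, which is exactly the claim. (The identical computation over $D$ gives $|D|\ge k/\Delta$ too, so the assumption $|C|\le|D|$ is not even needed.)

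There is no real obstacle here: the only point that needs care is the identity $k=\sum_{v\in C}\deg_T(v)$, which rests on the fact that $C$ is independent, part of the definition of a bipartition of the bipartite graph $T$.

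Alternatively, following the hint in the statement, one roots $T$ at a vertex $r$ of the smaller class $C$, so that the levels alternate between $C$ (even levels) and $D$ (odd levels). Every vertex other than $r$ has a parent and hence at most $\Delta-1$ children, while $r$ has at most $\Delta$ children; summing the number of children over the even levels gives $|D|\le\Delta+(\Delta-1)(|C|-1)=(\Delta-1)|C|+1$, so $k=|C|+|D|-1\le\Delta|C|$ and again $|C|\ge k/\Delta$. I would nonetheless prefer the first, degree-counting argument, since it is a one-line computation and avoids the bookkeeping with the root.
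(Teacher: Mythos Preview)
Your proof is correct. Your first argument---counting edges by summing degrees over one colour class---is in fact cleaner than the route the paper hints at: the paper suggests rooting in the smaller class and comparing levels, which is exactly your second argument, but the direct identity $k=\sum_{v\in C}\deg_T(v)$ gets there in one line without any bookkeeping about roots or children. Both arrive at the same inequality $k\le\Delta|C|$, so there is nothing to add.
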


\subsection{Tree embeddings}\label{sec:embeddings}
A greedy argument shows that every $k$-edge tree can be embedded into any graph of minimum degree at least $k$. We give two lemmas that generalise this simple observation.

\begin{lemma}\label{lem:greedy}
	Let $\Delta,h,k\in\NN$, let $(T,r)$ be a tree with $k-h$ edges and $\Delta(T)\le \Delta$, and let $G$ be a graph satisfying
	\begin{enumerate}[(i)]
		\item $\delta(G)\ge\Delta+h$;
		\item there are at most  $h$ vertices $x\in V(G)$ with $\deg(x)< k$. 
	\end{enumerate}
	Then $T$  can be embedded in $G$. Moreover, any vertex $v$ of $G$ can be chosen as the image of~$r$.
	
\end{lemma}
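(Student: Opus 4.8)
The plan is to embed $T$ greedily, processing the vertices of $T$ in a breadth-first order starting from the root $r$, while maintaining the invariant that at every step the vertex of $G$ chosen as the image of a tree vertex has enough unused neighbours to accommodate all its children. First I would set $v$ as the image of $r$, which is allowed since $v$ is arbitrary. Then, processing tree vertices one by one in BFS order, when I come to embed the children of an already-embedded vertex $x\in V(T)$ with image $g(x)\in V(G)$, I need $\deg_T(x)$ (at most $\Delta$) free neighbours of $g(x)$ outside the image of the already-embedded part of $T$. Since at any moment the embedded part has at most $k-h$ vertices (the total size of $T$), the image uses at most $k-h-1$ vertices other than $g(x)$ itself.

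The key counting step is this: a vertex $g(x)$ of $G$ has at least $\delta(G)\ge \Delta+h$ neighbours. If $\deg_G(g(x))\ge k$, then at most $k-h-1$ of these are already used, leaving at least $k-(k-h-1)=h+1>\Delta$ free neighbours, which is more than enough. The only danger is when $g(x)$ is one of the (at most $h$) low-degree vertices, i.e.\ $\deg_G(g(x))<k$; such a vertex might have as few as $\Delta+h$ neighbours, of which $k-h-1$ could in principle be used, leaving potentially too few. To handle this, I would refine the invariant: throughout the embedding I ensure that the set $L$ of low-degree vertices of $G$ (those with degree $<k$) used as images is ``paid for'' by the slack coming from the $+h$ term in the minimum degree. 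Concretely, I argue that whenever I embed the children of $x$, the number of already-used neighbours of $g(x)$ is at most $(k-h-1) - (h - |L\cap\mathrm{im}(g)|)$ is not quite the right bookkeeping; instead the clean way is: the total number of vertices of $T$ still to be embedded together with those already embedded is $|V(T)|=k-h+1$, so at the moment we embed $x$'s children, the number of vertices already occupying $V(G)$, excluding $g(x)$, is at most $k-h-1$, and hence the number of free neighbours of $g(x)$ is at least $\deg_G(g(x))-(k-h-1)$.

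If $g(x)$ has degree at least $k$ this is at least $h+1\ge \Delta+1$ (using $h\ge 1$; if $h=0$ the low-degree exception is vacuous and $\delta(G)\ge\Delta$ alone suffices), so we can always pick the required $\deg_T(x)\le\Delta$ children images. For the at most $h$ low-degree vertices we instead observe they can be avoided as images altogether: since $\delta(G)\ge\Delta+h$, whenever we need to choose an image for a child, among the free neighbours of $g(x)$ there are at least $\Delta+h - (k-h-1)$ — this is not obviously positive, so the honest approach is to avoid low-degree vertices proactively. So I would strengthen the procedure: when choosing the $\deg_T(x)$ images for the children of $x$, first try to choose them among free high-degree neighbours of $g(x)$; a high-degree neighbour is free unless already used, and $g(x)$ has $\ge\delta(G)\ge \Delta+h$ neighbours, of which at most $h$ are low-degree and at most (number already embedded) are used — and crucially we maintain that no low-degree vertex is ever used as an image except possibly $g(r)=v$ if $v$ itself is low-degree. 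Counting: free high-degree neighbours of $g(x)$ number at least $\deg_G(g(x)) - h - |\mathrm{im}(g)\setminus\{g(x)\}| \ge (\Delta+h) - h - (k-h-1)$, which again needs $\deg_G(g(x))\ge k$ to be comfortably large; for the $\le h$ low-degree $g(x)$ this fails, so those vertices must never be assigned any children — meaning each low-degree vertex, if used at all, is used as the image of a leaf of $T$.

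Thus the main obstacle, and the crux of the argument, is the bookkeeping that ensures the at most $h$ low-degree vertices of $G$ are only ever used as images of \emph{leaves} of $T$ (except possibly the root). I would make this work by choosing, at each branching step, images for the children of $x$ that are leaves of $T$ first from among any convenient free neighbours of $g(x)$ (low-degree allowed), and images for the children of $x$ that are non-leaves only from among free neighbours of $g(x)$ of degree $\ge k$. The count for the latter: $g(x)$ itself has degree $\ge k$ in this case (since $x$ is a non-leaf being given children, wait — $g(x)$ might still be low-degree). The cleanest fix is the standard one: process $T$ so that the at most $h$ low-degree vertices of $G$ are simply forbidden as images, which is possible because $T$ has at most $|V(T)|=k-h+1$ vertices while $G$ has $\ge\delta(G)+1$ vertices and, more to the point, at each step the current image $g(x)$ has $\deg_G(g(x))\ge \Delta+h$ neighbours, of which at most $(k-h-1)$ are used by $\mathrm{im}(g)$ and at most $h-1$ are the \emph{remaining} forbidden low-degree vertices, giving at least $(\Delta+h)-(k-h-1)-(h-1)$ — still not positive in general. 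I conclude that the only viable route, and hence the real content of the proof, is the leaf/non-leaf dichotomy: embed $T$ in BFS order, and when embedding the children of $x$, reserve the non-leaf children for high-degree free neighbours of $g(x)$ — whose count is $\ge \deg_G(g(x)) - h - (\text{used}) \ge k - h - (k-h-1) = 1$ too weak, so one sharpens ``used'' to ``used by non-leaf vertices of $T$ plus the $\le h$ leaf-images on low-degree vertices''. I expect writing this bookkeeping precisely — tracking that non-leaf images of $T$ occupy only high-degree vertices and leaf images occupy the remaining budget — to be the one genuinely delicate step; everything else is a routine greedy embedding using $\Delta(T)\le\Delta$ and $\delta(G)\ge\Delta+h$.
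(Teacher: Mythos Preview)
You actually stated the correct invariant midway through---``no low-degree vertex is ever used as an image except possibly $g(r)=v$''---and then immediately abandoned it because you miscounted. The point you missed is that this invariant \emph{itself} guarantees $\deg_G(g(x))\ge k$ for every $x\neq r$, so when you wrote
\[
\deg_G(g(x)) - h - |\mathrm{im}(g)\setminus\{g(x)\}| \ \ge\ (\Delta+h) - h - (k-h-1),
\]
you plugged in the wrong lower bound for $\deg_G(g(x))$. Using $\deg_G(g(x))\ge k$ (which holds by the invariant) and $|\mathrm{im}(g)\setminus\{g(x)\}|\le k-h-1$ (since $|V(T)|=k-h+1$ and the child being placed is not yet embedded), you get at least $k-h-(k-h-1)=1$ free high-degree neighbour, so children can be placed one at a time while maintaining the invariant. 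The root case is separate and easy: at that moment only $v$ is used, $v$ has at least $\Delta+h$ neighbours of which at most $h$ are low-degree, so there are at least $\Delta\ge\deg_T(r)$ high-degree neighbours available for the children of $r$.

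This is exactly the paper's three-line proof: embed $r$ at $v$, use $\delta(G)\ge\Delta+h$ to send all children of $r$ to vertices of degree $\ge k$, and then greedily embed the rest levelwise staying inside the high-degree vertices. Your leaf/non-leaf dichotomy and the associated ``delicate bookkeeping'' are unnecessary; the argument is not delicate at all once you apply your own invariant.
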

\begin{proof}
	We construct an embedding $\phi$ as follows. We set $\phi(r):=v$. Since $\deg(v)\ge \Delta+h$, we can embed each neighbour of $r$ into a neighbour of $v$ that has degree at least $k$. Since $T$ has $k-h$ vertices, we can then embed the rest of $T$ levelwise using only vertices of degree at least $k$ at each step. 
\end{proof}

Observe that  for $h=0$ Lemma~\ref{lem:greedy} recovers the greedy procedure  we mentioned above.

\noindent If the host graph $G$ is bipartite, 
 one can relax the minimum degree condition for one side of the bipartition of $G$. We leave the proof of the following lemma to the reader.
\begin{lemma}\label{lem:greedy3}
	Let $\Delta,h,k_1,k_2\in\NN$, let $(T,r)$ be a tree with colour classes  $C,D$ of sizes $k_1-h$ and $k_2-h$, respectively, and  $\Delta(T)\le\Delta$. Let $G=(A,B)$ be a bipartite graph  such that
	\begin{enumerate}[(i)]
		\item $\delta(G)\ge\Delta+h$;
		\item there are at most $h$ vertices $a\in A$ with $\deg(x)< k_2$;
		\item there are at most $h$ vertices $b\in B$ with $\deg(x)< k_1$.  
	\end{enumerate}
	Then $T$  can be embedded into $G$ with $C$ going to $A$ and $D$ going to $B$. Moreover, if $r\in C$ (resp. $D$), then any vertex $a\in A$ (resp. $b\in B$) can be chosen as the image of $r$.
	\end{lemma}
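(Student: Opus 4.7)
My plan is to mimic the greedy BFS embedding from Lemma~\ref{lem:greedy}, but keep separate accounts for each side of the bipartition of $G$. Without loss of generality assume $r\in C$ (the case $r\in D$ is symmetric), and set $\phi(r):=a$ for the prescribed vertex $a\in A$. I then embed $T$ in BFS order from $r$, maintaining the following invariant: every non-root embedded vertex $v\in C$ has image in $A$ of degree at least $k_2$, and every non-root embedded vertex $v\in D$ has image in $B$ of degree at least $k_1$. Note that the image $a$ of $r$ itself is not required to satisfy any lower bound beyond (i).

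For the first step, (i) gives $\deg(a)\ge\Delta+h$ neighbours of $a$ in $B$, of which by (iii) at most $h$ are ``bad'' (have degree less than $k_1$). This leaves at least $\Delta\ge\deg_T(r)$ good neighbours, all unused, so I can embed the children of $r$ into distinct good neighbours of $a$, establishing the invariant for them. For any subsequent step, let $v$ be the currently processed non-root vertex with image $y$. Say $v\in C$, so $y\in A$ with $\deg(y)\ge k_2$ by the invariant, and each child $u$ of $v$ must be embedded into $B$. Among the at least $k_2$ neighbours of $y$ in $B$, condition (iii) marks at most $h$ as bad, yielding at least $k_2-h$ good candidates. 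The total number of $B$-vertices ever used in the embedding is at most $|D|=k_2-h$, so strictly before placing $u$ at most $k_2-h-1$ of these are occupied. Hence at least one good unused neighbour of $y$ in $B$ is available for $\phi(u)$, and the invariant is preserved. The case $v\in D$ is symmetric, using (ii) and $|C|=k_1-h$. Iterating yields an embedding of $T$ with $C\to A$ and $D\to B$.

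There is really no substantive obstacle here beyond the bookkeeping: one must ensure simultaneously that each chosen image is adjacent to the parent's image, is unused, and is itself good enough to support further children. The invariant above decouples these requirements cleanly, and the final count is tight precisely because $|C|=k_1-h$ and $|D|=k_2-h$ exactly match the allowances of $h$ low-degree vertices on each side provided by (ii) and (iii).
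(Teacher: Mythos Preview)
Your proof is correct and is precisely the natural argument the paper has in mind; indeed, the paper explicitly leaves this proof to the reader, referring to the analogous (non-bipartite) Lemma~\ref{lem:greedy}, whose proof is the same greedy level-by-level embedding you give here, with the two-sided bookkeeping you describe.
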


	\subsection{Matching lemma}

Later on we will need the following lemma on matchings in graphs with large minimum degree. This lemma is a slight variation of Lemma~5.7  from~\cite{BPS1}.
\begin{lemma}\label{lem:matching2}
	Let $\varepsilon,\eta\in(0,1)$,  let $t,\ell\in\mathbb{N}$, and let $G$ be a graph on $n\ge 2t+\ell$ vertices with $\delta(G)\ge t+\ell$ which has an $(\varepsilon,\eta)$-regular partition into $\ell$ parts. Then $G$ has a subgraph $G'$ with $|G'|\ge n-\ell$ that admits a $(5\varepsilon,\eta-\varepsilon)$-regular partition with $2\ell$ parts whose corresponding reduced graph $\mathscr R$ contains a matching $\mathscr M$ and an independent family of clusters~$\mathscr I$, disjoint from~$\mathscr M$, such that	
	\begin{enumerate}[(i)]
		\item $\bigcup V(\mathscr M)\cup V(\bigcup\mathscr{I})=V(G')$;\label{cover}
		\item $|\bigcup V(\mathscr M)|\geq 2t$; and\label{size}
		\item there is a  partition $V(\mathscr M)=\mathscr V_1\cup \mathscr V_2$ such that $N_{\mathscr R}(\mathscr I)\subseteq \mathscr V_1$  and  every edge in $\mathscr M$ has one endpoint in $\mathscr V_1$ and one endpoint in $\mathscr V_2$.\label{partition}
	\end{enumerate}
\end{lemma}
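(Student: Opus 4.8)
The plan is to take the given $(\varepsilon,\eta)$-reduced graph $\mathscr R_0$ of $G$ on $\ell$ clusters, and work inside $\mathscr R_0$ to produce the desired matching and independent family. First I would record the basic facts about $\mathscr R_0$: since $\delta(G)\ge t+\ell$ and each cluster has size roughly $n/\ell$, Fact~\ref{fact:3}(i) gives a lower bound on the minimum degree of $\mathscr R_0$ that is large relative to $\ell$; more importantly, any cluster together with its $\mathscr R_0$-neighbourhood covers at least $\delta(G)\ge t+\ell$ vertices of $G$, so the union of clusters incident to any vertex cover of $\mathscr R_0$ is large. Next, take a maximum matching $\mathscr M_0$ in $\mathscr R_0$; then $V(\mathscr M_0)$ is a vertex cover and $\mathscr I_0:=V(\mathscr R_0)\setminus V(\mathscr M_0)$ is an independent set of clusters. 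By maximality, $N_{\mathscr R_0}(\mathscr I_0)\subseteq V(\mathscr M_0)$, and moreover $N_{\mathscr R_0}(\mathscr I_0)$ is a \emph{transversal} of $\mathscr M_0$ (a standard König-type observation: each edge of a maximum matching has at most one endpoint with a neighbour in the independent complement, else one could augment). This transversal property is exactly what gives the partition $V(\mathscr M_0)=\mathscr V_1\cup\mathscr V_2$ in~\eqref{partition}: put the transversal-vertices (and hence all of $N_{\mathscr R_0}(\mathscr I_0)$) into $\mathscr V_1$ and their matching-partners into $\mathscr V_2$.

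The remaining issue is the size condition~\eqref{size}, $|\bigcup V(\mathscr M)|\ge 2t$, which need not hold for $\mathscr M_0$ itself if the matching is small. Here I would use that $V(\mathscr M_0)$ is a vertex cover: every edge of $\mathscr R_0$ is incident to $V(\mathscr M_0)$, so $V(\mathscr M_0)\cup N_{\mathscr R_0}(V(\mathscr M_0))=V(\mathscr R_0)$, whence $\bigl|\bigcup(V(\mathscr M_0)\cup N_{\mathscr R_0}(V(\mathscr M_0)))\bigr|=|G'|\ge n-\varepsilon' n$. Combined with the fact that $\bigcup(V(\mathscr M_0))$ together with neighbourhoods covers $G'$ and each cluster has size about $n/\ell$, one derives that $\bigcup V(\mathscr M_0)$ is already fairly large — but to reach the clean bound $2t$ and to move the leftover mass of $\mathscr I_0$ into the matching, the trick (this is where the "slight variation of Lemma~5.7 from~\cite{BPS1}" comes in, and where the doubling from $\ell$ to $2\ell$ parts is used) is to refine each cluster $V_i$ of $\mathscr R_0$ into two halves $V_i', V_i''$. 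This gives a new partition into $2\ell$ parts; by Fact~\ref{fact:1}(ii) each resulting sub-pair is $(5\varepsilon,\eta-\varepsilon)$-regular (taking $\alpha=1/2$ gives regularity parameter $2\varepsilon/\alpha=4\varepsilon\le 5\varepsilon$ and density $\ge\eta-\varepsilon$). In the refined reduced graph $\mathscr R$, one can then take a maximum matching again, or more cleverly, pair up the two halves of each cluster in $\mathscr I_0$ that has a neighbour — exploiting that $\delta(\mathscr R)$ is still large — so as to absorb essentially all of the independent clusters into the matching while keeping the bipartition structure $\mathscr V_1,\mathscr V_2$ intact. Iterating or a direct König argument on $\mathscr R$ then yields $|\bigcup V(\mathscr M)|\ge 2t$ from $|G'|\ge n-\ell\ge 2t+\ell-\ell=2t$ together with the cover property, since the non-matching clusters form an independent set and their total mass is controlled.

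I would organise the final write-up as: (1) pass to $G'\subseteq G$ and the $2\ell$-part refinement, recording the $(5\varepsilon,\eta-\varepsilon)$-regularity and the minimum-degree bound $\delta(\mathscr R)\gtrsim$ (something $\ge$ a few); (2) take a maximum matching $\mathscr M$ of $\mathscr R$, set $\mathscr I:=V(\mathscr R)\setminus V(\mathscr M)$, and verify~\eqref{cover} (immediate) and~\eqref{partition} via the König transversal argument; (3) prove~\eqref{size} by the vertex-cover / size-counting argument above, using $n\ge 2t+\ell$ and $\delta(G)\ge t+\ell$. The main obstacle I expect is step~(3): one has to be careful that after refining and re-matching, the leftover independent family $\mathscr I$ cannot be so large that $|\bigcup V(\mathscr M)|$ drops below $2t$ — this requires quantifying how much of $V(G')$ the independent clusters can occupy, which is where the precise interplay between $\delta(G)\ge t+\ell$, the cluster sizes, and the regularity error terms must be handled. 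Since this is stated as a minor variation of an existing lemma, I would lean on the argument of Lemma~5.7 in~\cite{BPS1} for the delicate counting and only flag the changes needed to get the bipartition refinement in~\eqref{partition} and the exact constant $5\varepsilon$.
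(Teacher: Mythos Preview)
The paper does not actually prove this lemma; it is quoted as ``a slight variation of Lemma~5.7 from~\cite{BPS1}'' and no argument is supplied. So there is no proof in the paper to compare against, and I can only assess your plan on its own terms.

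Your outline (refine each cluster in half, take a maximum matching in the refined reduced graph, set $\mathscr I$ to be the unmatched clusters, then verify (i)--(iii)) is the right skeleton, and the regularity bookkeeping via Fact~\ref{fact:1}\eqref{fact:1,2} is fine. But the justification you give for property~\eqref{partition} is wrong. Your ``K\"onig-type observation'' --- that each edge of a maximum matching has at most one endpoint with a neighbour in the independent complement, ``else one could augment'' --- fails in non-bipartite graphs: in a triangle with maximum matching $\{ab\}$, the unmatched vertex $c$ is adjacent to \emph{both} endpoints $a$ and $b$, and there is no augmenting path. Your augmenting argument only rules out the case where the two endpoints see \emph{distinct} unmatched vertices; it says nothing when they share a common one. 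So in $\mathscr R_0$ the transversal claim is simply false, and in the refined graph $\mathscr R$ your stated reason is still insufficient.

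What actually makes \eqref{partition} work after refinement is the \emph{twin structure}: in the doubled reduced graph every cluster has a non-adjacent twin with identical neighbourhood. This forces, via the Gallai--Edmonds decomposition, every factor-critical component of the $D$-set to be a singleton (a twin-closed component has even order, contradicting factor-criticality), so $D(\mathscr R)$ is independent and $N_{\mathscr R}(\mathscr I)\subseteq A(\mathscr R)$. Since a maximum matching pairs each vertex of $A(\mathscr R)$ with a vertex of $D(\mathscr R)$ and perfectly matches $C(\mathscr R)$, putting $A(\mathscr R)$ (and one endpoint of each $C$-edge) into $\mathscr V_1$ gives the required bipartition. You should replace the K\"onig hand-wave with this argument. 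Incidentally, you have the relative difficulty of (ii) and (iii) backwards: once \eqref{partition} holds, \eqref{size} is almost immediate --- any $I\in\mathscr I$ has all of its at least $\delta(G')\ge t$ neighbours in $\bigcup\mathscr V_1$, so $|\bigcup V(\mathscr M)|=2|\bigcup\mathscr V_1|\ge 2t$ (and if $\mathscr I=\emptyset$ then $|\bigcup V(\mathscr M)|=|G'|\ge n-\ell\ge 2t$).
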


\section{Trees with constant maximum degree }\label{sec:smaaall}
In this section we work towards the proof of our main result, Theorem~\ref{thm:main}, and along the way, we prove Theorem~\ref{smalltheorem}. This latter theorem follows directly from Propositions~\ref{prop:small:bip} and~\ref{prop:small}. These are proved in  Sections~\ref{sec:almostcomplete} and~\ref{sec:almostcompletebip}, respectively. In Section~\ref{sec:robust} we use a regularity approach and results from~\cite{BPS1} to cover the case when the host graph is significantly larger than the tree.
Finally, in Subsection~\ref{sec:proof_main}, we put everything together to prove Theorem~\ref{thm:main}.

\subsection{Almost complete bipartite graphs}\label{sec:almostcomplete}

Recall that $H$ is $\beta$-bipartite if at least a $(1-\beta)$-fraction of its edges lie between $A$ and $B$. 

\begin{proposition}\label{prop:small:bip}
Let $k, \Delta\in \mathbb N$ such that $k\ge 8\Delta^2$. Let $G=(A,B)$ be a $\frac 1{50\Delta^2}$-bipartite graph, with  $|A|,|B|\le (1+\frac 1{25\Delta^2})k$,  $d(G)>k-1$ and $\delta(G)\ge \frac k{2}$. Then $G$ contains each tree $T\in\mathcal T(k,\Delta)$. 
\end{proposition}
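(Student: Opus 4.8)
The plan is to embed $T$ greedily using Lemma~\ref{lem:greedy3}, after first setting up a suitable bipartition of $G$ and splitting $T$ at a well-chosen cut vertex. Write $V(T)=C\cup D$ for the bipartition of $T$, say with $|C|\ge|D|$; by Fact~\ref{thisisafact} we have $|D|\ge k/\Delta$, and of course $|C|+|D|=k+1$. The first task is to pass from the given $\frac{1}{50\Delta^2}$-bipartite partition $V(G)=A\cup B$ to a ``cleaned'' one: iteratively move to the other side any vertex having fewer than, say, $\frac{k}{4\Delta}$ neighbours on the opposite side. Each such move strictly decreases $e(A)+e(B)$, so the process terminates; since it started below $\frac{1}{50\Delta^2}e(G)\le \frac{1}{50\Delta^2}\cdot\frac{n^2}{2}$ and each move is triggered by a low cross-degree vertex, one can bound the total number of vertices ever moved (hence the final sizes $|A|,|B|$) by something like $\frac{2}{25\Delta^2}k$ more than the original, keeping $|A|,|B|\le (1+\tfrac{1}{10\Delta^2})k$ or so, while guaranteeing $\deg(a,B)\ge \frac{k}{4\Delta}$ for all $a\in A$ and $\deg(b,A)\ge\frac{k}{4\Delta}$ for all $b\in B$ in the resulting bipartite subgraph $G'=(A,B)$. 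I also keep from $\delta(G)\ge k/2$ that, in $G'$, every vertex still has cross-degree at least $k/2 - \tfrac{1}{10\Delta^2}k \ge \tfrac{k}{3}$, say, for vertices that were never moved, and at least $\frac{k}{4\Delta}$ always.

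Next I use Lemma~\ref{cut_off} with a small constant $\gamma$ (something like $\gamma=\frac{1}{4\Delta^2}$, legal since $k\ge 8\Delta^2\ge 200/\gamma$ would need checking — if not, $k$ is bounded and the statement is a finite check, or one takes $\gamma$ as large as allowed) to obtain a subtree $(T^*,t^*)$ with $|V(T^*)|\le\gamma k$ such that every component of $T-T^*$ meets $t^*$. The idea is to embed $T-T^*$ first, greedily, using Lemma~\ref{lem:greedy3} on $G'$: the two colour classes of $T-T^*$ have sizes $|C|-h_C$ and $|D|-h_D$ for small $h_C,h_D\le|V(T^*)|\le\gamma k$, and since $|C|,|D|\le k+1\le |A|,|B|$ roughly, the ``at most $h$ vertices of small degree'' hypotheses are met: a vertex $a\in A$ is ``bad'' only if $\deg(a,B)<|D|$, i.e.\ $\deg(a,B)< k+1$, and by the average-degree/minimum-degree bookkeeping the number of such vertices is at most $O(\Delta^2)\le \gamma k$ — this is where the $\frac{1}{25\Delta^2}$-slack on $|A|,|B|$ and $\delta(G)\ge k/2$ combine. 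Lemma~\ref{lem:greedy3} then embeds $T-T^*$ with its classes going to $A,B$ respectively, and crucially lets us choose the image of $t^*$ freely, as any vertex on the appropriate side.

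Finally I need to attach $T^*$. Having embedded $T-T^*$ with image $t^*\mapsto v$, I want to extend to $T^*$ inside $G'$ minus the already-used vertices. The point is that $|V(T^*)|\le\gamma k$ is tiny, and $v$ together with its descendants in $T^*$ only need cross-degree roughly $\gamma k$ into the *unused* part of the opposite side; since every vertex of $G'$ has cross-degree $\ge \frac{k}{4\Delta}\gg \gamma k$ and only about $k$ vertices total have been used, plenty of room remains, so a second application of Lemma~\ref{lem:greedy} (or Lemma~\ref{lem:greedy3}) on $G'$ restricted to the unused vertices, rooted at $t^*\mapsto v$, finishes the embedding. One must pick the order of operations and the split vertex so that the colour class that ends up ``tight'' (the larger one, $C$, since $|C|$ could be close to $|A|$) is handled when the most room is available; if it is cleaner, one can instead split off $T^*$ so that $t^*$ lies in the class with more slack. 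The main obstacle I expect is the bookkeeping in the cleaning step: controlling simultaneously (a) that the final $|A|,|B|$ do not grow past the budget, (b) that every vertex ends with large cross-degree, and (c) that the number of vertices with cross-degree below $|C|$ (resp.\ $|D|$) stays below $\gamma k$ so that Lemma~\ref{lem:greedy3} applies — these three want to pull the threshold in different directions, and getting constants like $\frac{1}{50\Delta^2}$ and $\frac{1}{25\Delta^2}$ to close the loop is the delicate part.
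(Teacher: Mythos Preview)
Your approach is substantially more complicated than the paper's, and in its current form it has genuine gaps. The paper's proof is a direct one-shot application of Lemma~\ref{lem:greedy3} with no tree cutting and no cleaning of the bipartition. The point you are missing is that Fact~\ref{thisisafact} already forces the tree to be well balanced: with $\Delta(T)\le\Delta$ one has $\max\{|C|,|D|\}\le (1-\tfrac{1}{\Delta})k$. On the host side, a short averaging/concentration argument (Lemma~\ref{lem:con2}) shows that after deleting at most $O(\sqrt\eps)|A|$ vertices from $A$ and $O(\sqrt\eps)|B|$ from $B$ (where $\eps=\tfrac{1}{25\Delta^2}$), the remaining bipartite graph $H$ has $\delta(H)\ge(1-5\sqrt\eps)k=(1-\tfrac{1}{\Delta})k\ge\max\{|C|,|D|\}$. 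So a single greedy embedding via Lemma~\ref{lem:greedy3} suffices; there is no need to split off a subtree $T^*$ or to iteratively rebalance $A$ and $B$.

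Beyond being unnecessary, the cutting step actually fails as written: Lemma~\ref{cut_off} with $\gamma=\tfrac{1}{4\Delta^2}$ requires $k\ge 200/\gamma=800\Delta^2$, while you only assume $k\ge 8\Delta^2$. Your fallback ``$k$ is bounded and the statement is a finite check'' is not a proof, since it would have to be carried out separately for every $\Delta$. There is also a confusion in the embedding order: $t^*$ belongs to $T^*$, not to $T-T^*$, so ``embed $T-T^*$ and choose the image of $t^*$ freely'' does not parse; moreover $T-T^*$ is a forest with several components all attached to $t^*$, and Lemma~\ref{lem:greedy3} is stated for a single rooted tree. These issues are fixable in principle, but the fix is precisely to drop the cutting altogether and use the balance from Fact~\ref{thisisafact} directly.
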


\begin{proof} 
Set $\eps:= \frac 1{25\Delta^2}$ and  write $n=|V(G)|$. Then, $n\le (1+\eps)2k$. Since $G$ is $\frac \eps 2$-bipartite, 
	we know that $e(A,B)\ge (1-\eps)\frac{kn}{2}.$
	Suppose that $|B|\ge \frac n2\ge |A|$. Then
	\begin{equation}\label{avg:A}\frac{1}{|A|}\sum_{a\in A}\deg(a,B)>\frac{(1-\eps)kn}{2|A|}\ge (1-\eps)k,\end{equation}
	and thus $|B|\ge(1-\eps)k$. Furthermore, since $n=|A|+|B|$, we have
	$$|A||B|\ge e(A,B)\ge (1-\eps)\frac{kn}{2}\ge (1-\eps)k\sqrt{|A||B|},$$
	and thus, the fact that $|B|\le (1+\eps)k$ implies that
	$|A|\ge\frac{(1-\eps)^2}{1+\eps}k\ge (1-3\eps)k.$
	Now we can give a lower bound for the average degree from $B$ to $A$ by using the first inequality from~\eqref{avg:A} and the fact that $n=|A|+|B|$ to calculate
	\begin{equation}\label{avg:B}\frac{1}{|B|}\sum_{b\in B}\deg(b,A)> (1-\eps)\frac{k}{2}\Big(1+\frac{|A|}{|B|}\Big)\ge
	\frac{1-\eps}{2}\Big(1+\frac{1-3\eps}{1+\eps}\Big)k
	\ge(1-4\eps)k.
	\end{equation}
	Using Lemma~\ref{lem:con2} with $f_A(a)=\deg(a,B)$ for $a\in A$, $t_A=(1-\eps)k$ and $\eps_A=4\eps$, and with $f_B(b)=\deg(b,A)$ for $b\in B$, $t_B=(1-3\eps)k$ and $\eps_B=9\eps$, we see that 
 all but at most $2\sqrt\eps|A|$ vertices from $A$ have degree at least $(1-2\sqrt\eps)k$ to $B$, and all but at most $3\sqrt\eps|B|$ vertices from $B$ have degree at least $(1-3\sqrt\eps)k$ to $A$.
	Let $A_0$ and $B_0$ be the set of vertices of low degree in $A$ and $B$ respectively, and let $H$ be the bipartite graph induced by $A'=A\setminus A_0$ and $B'=B\setminus B_0$. Then the minimum degree of $H$ is at least $(1-5\sqrt{\eps})k$. Now, given a tree $T\in\mathcal T(k,\Delta)$, if  $V(T)=C\cup D$ is its natural bipartition, Fact~\ref{thisisafact} implies that
	\[\max\{|C|,|D|\}\le \Big(1-\frac 1\Delta\Big)k\le (1-5\sqrt{\eps})k,\]
	and therefore, by Lemma~\ref{lem:greedy3}, we can embed $T$ in $H$.%
	\end{proof}

\subsection{Almost complete graphs}\label{sec:almostcompletebip}

Now we turn to the non-bipartite case. In this case we can embed trees with maximum degree in $o(\sqrt k)$. As a first step we will embed a small but linear size subtree $T^*\subseteq T$ trying to fill up as many low degree vertices of $G$ as possible.  We can then use the following result to embed the leftover vertices from $T-T^*$.

\begin{lemma}[\cite{2k3:2016}, Lemma 4.4]\label{lem:mindeg1}Let $0<\nu<\tfrac{1}{200}$, let $k\in\mathbb{N}$ and let $H$ be a $k+1$-vertex graph with $\delta (H)\ge (1-2\nu)k$, and let $v\in V(H)$ be a vertex of degree $k$. If $(T,r)$ is a tree with at most $k$ edges such that every vertex is adjacent to at most $\nu k/2$ leaves, then $T$ can be embedded in $H$ and any vertex in $H-v$ can be chosen as the image of~$r$. \end{lemma}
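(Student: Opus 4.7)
The plan is a two-phase embedding using the vertex $v$ of full degree $k$ as a safety reservoir. Let $L$ denote the set of leaves of $T$ other than $r$ and set $T^- := T - L$; thus $r \in T^-$. Phase~1 embeds $T^-$ in BFS order starting with $r \mapsto u$ into $H \setminus \{v\}$, greedily picking an unused neighbor of each parent's image. Phase~2 then embeds the leaves $L$ into the remaining unused vertices of $H$ (including $v$) via Hall's theorem.

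In Phase~1, the greedy procedure succeeds whenever $|T^-| \le (1 - 2\nu)k$: at step~$i$, the image of the parent has at least $(1-2\nu)k - 1 - (i - 2) \ge 1$ unused non-$v$ neighbors available. This holds whenever $|L| \ge 2\nu k + 1$. When $|L| < 2\nu k + 1$, the tree $T$ has few leaves, so by Lemma~\ref{bare} it contains many vertex-disjoint bare paths of some fixed length~$\ell$. In this regime we first remove the interior vertices of enough such bare paths to reduce $T^-$ to a subtree $T''$ of size at most $(1 - 2\nu)k$, embed $T''$ greedily in $H \setminus \{v\}$, and finally reinsert the missing path interiors using a Dirac-type linkage argument in the unused portion of $H \setminus \{v\}$ (whose minimum degree comfortably exceeds half its order).

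For Phase~2 we verify Hall's condition on the bipartite graph $B$ between $L$ and the unused vertices. Given $S \subseteq L$ with parent-set $Y_S := \{y_\ell : \ell \in S\}$: if $|Y_S| > 2\nu k$, then no unused vertex can be a common non-neighbor of all $\{\phi(y) : y \in Y_S\}$ (each vertex of $H$ has at most $2\nu k$ non-neighbors), so $|N_B(S)|$ equals the entire unused set and Hall holds; if $|Y_S| \le 2\nu k$, the leaf-bound yields $|S| \le |Y_S| \cdot \nu k / 2$, and combined with the lower bound $|N_B(S)| \ge |L| - 2\nu k$ (from the non-neighborhood of any single $\phi(y)$) together with the ever-available joker $v$, Hall's condition is again verified. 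The main obstacle is the tight regime $|T| = k + 1$ with few leaf-parents but many leaves per parent, where the direct Hall count can fail: here one must arrange Phase~1 strategically so that no ``unused common non-neighbor'' of the leaf-parent images persists, combining the bare-path flexibility of Lemma~\ref{bare} with the universality of $v$ to close the argument.
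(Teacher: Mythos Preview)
The paper does not prove this lemma at all: it is quoted verbatim from~\cite{2k3:2016} and used as a black box inside Proposition~\ref{prop:small}. So there is no ``paper's own proof'' to compare against, and your attempt must stand on its own.

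As a standalone argument, the proposal is not complete. The two-phase plan (embed $T^-=T-L$ greedily avoiding $v$, then match the leaves via Hall) is a natural strategy, but the Hall verification in Phase~2 breaks exactly where you say it does. In the regime $|T|=k+1$, $|L|$ just above $2\nu k$, and $|Y_S|\le 2\nu k$, your bounds give $|N_B(S)|\ge |L|-2\nu k$ (plus the single joker $v$) while $|S|$ can be as large as $|Y_S|\cdot \nu k/2$, which is quadratic in $k$; the inequality $|S|\le |N_B(S)|$ simply fails. Your final paragraph acknowledges this and gestures at a fix (``arrange Phase~1 strategically so that no unused common non-neighbor of the leaf-parent images persists''), but this is the entire difficulty and you have not carried it out. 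Note also that in this bad regime $T$ may have very few bare paths relative to $|L|$ (all that Lemma~\ref{bare} guarantees is $|T|/4\ell$ of one or the other), so invoking ``bare-path flexibility'' does not obviously help.

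There is a second gap in the few-leaves branch of Phase~1. Removing the interiors of bare paths from $T^-$ produces a forest, not a tree, and after greedily embedding its components into $H\setminus\{v\}$ you must reconnect prescribed pairs of embedded endpoints by vertex-disjoint paths of a \emph{fixed} length through the unused vertices. A Dirac-type minimum-degree bound on the unused graph gives Hamiltonicity but not, without further work, a system of short disjoint paths with prescribed endpoints and lengths; and your parameter choice $|T''|\le (1-2\nu)k$ leaves the unused graph with minimum degree possibly close to zero, so even the Dirac hypothesis is not secured. Both issues are repairable, but the proposal as written is a sketch rather than a proof.
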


\begin{proposition}\label{prop:small}
Let $k\ge 10^{6}$ and let $G$ be a graph on $n\le (1+10^{-11})k$ vertices such that $d(G)>k-1$ and  $\delta(G)\ge \frac k2$. Then $G$ contains every tree $T\in\mathcal T(k,\frac{\sqrt k}{1000})$.
\end{proposition}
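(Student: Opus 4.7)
My plan is to combine a ``low-degree absorption'' step via a small subtree with an application of Lemma~\ref{lem:mindeg1}. I fix parameters $\varepsilon = 10^{-11}$ so that $n \le (1+\varepsilon)k$, a constant $\mu < 1/200$ for use in Lemma~\ref{lem:mindeg1}, and a small constant $\gamma \in (0,1)$ for Lemma~\ref{cut_off}. Using $d(G) > k-1$ and $n \le (1+\varepsilon)k$, the complement $\overline{G}$ has at most $n(n-k)/2 = O(\varepsilon k^2)$ edges, so a double-counting argument shows that the set $L := \{v \in V(G) : \deg_G(v) < (1-2\mu)k\}$ of ``low-degree'' vertices satisfies $|L| = O(\varepsilon k/\mu)$, which is much smaller than $\gamma k$. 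An averaging argument using $d(G) > k-1$ also produces a vertex $v^* \in V(G)$ with $\deg_G(v^*) \ge k$.

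Next I would apply Lemma~\ref{cut_off} to obtain a subtree $T^* \subseteq T$ rooted at $t^*$ with $|T^*| \in [\gamma k/2, \gamma k]$ and every component of $T - T^*$ adjacent to $t^*$. The heart of the argument is to embed $T^*$ into $G - \{v^*\}$ so that (a) the root image $u := \phi(t^*)$ lies in $N_G(v^*) \setminus L$ (possible because $L$ is small relative to $\deg_G(v^*)$), and (b) $L \subseteq \phi(V(T^*))$. To guarantee (b), I would apply Lemma~\ref{bare} to $T$: if $T$ has many bare paths of length three, each such bare path contains an internal vertex of $T$-degree~$2$ that can absorb a vertex of $L$, since its two $T$-neighbors only need images inside $N_G(w)$, which has size at least $k/2$. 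If instead $T$ has many leaves, then the bound $\Delta(T) \le \sqrt{k}/1000$ implies that the number of distinct parents of leaves is $\Omega(\sqrt{k})$, which exceeds $|L|$; a Hall-type matching then places each $w \in L$ at a distinct leaf whose parent is forced to lie in $N_G(w)$. The remainder of $T^*$ is embedded greedily, which is possible since $\delta(G - \{v^*\}) \ge k/2 - 1 \gg |T^*|$.

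Let $G' := G - (\phi(V(T^*)) \setminus \{u\})$ and let $T' := T - (T^* \setminus \{t^*\})$, which has $k' := k - |T^*| + 1$ edges. By construction, every vertex of $V(G') \setminus \{u\}$ has degree at least $(1-2\mu)k$ in $G$. I then set $H' := \{v^*\} \cup S$ where $S$ consists of $u$ together with $k' - 1$ further neighbors of $v^*$ in $G'$; this selection is possible because $|N_{G'}(v^*)| \ge \deg_G(v^*) - (|T^*|-1) \ge k'$. By construction $v^*$ has full degree $k'$ in $H'$, and a short computation shows that $\delta(H') \ge (1-2\mu')k'$ for some $\mu' < 1/200$ slightly larger than $\mu$, while the leaf-condition in Lemma~\ref{lem:mindeg1} is automatic from $\Delta(T) \le \sqrt{k}/1000 \le \mu' k'/2$. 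Applying Lemma~\ref{lem:mindeg1} to embed $T'$ into $H'$ with root $t^* \mapsto u$ then completes the embedding of $T$ in $G$.

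The main obstacle is condition (b) in the embedding of $T^*$: forcing the entire set $L$ into $\phi(V(T^*))$ requires both a careful balancing of parameters (so that $|T^*|$ accommodates $|L|$ absorption slots while still leaving $|N_{G'}(v^*)| \ge k'$ available for the concluding step) and a subtle use of Lemma~\ref{bare}, since its two structural cases require genuinely different absorption strategies, combined with a Hall-type matching in the many-leaves case that exploits the $\Omega(\sqrt{k})$ lower bound on distinct leaf-parents.
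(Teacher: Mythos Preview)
Your overall architecture---absorb the low-degree vertices into a small subtree $T^*$, then finish with Lemma~\ref{lem:mindeg1} on a neighbourhood of a vertex $v^*$ of degree at least $k$---matches the paper's. But there is a real gap in your leaf case, and a smaller slip in where you apply Lemma~\ref{bare}.

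The counting in the leaf case does not close. You assert that the number of distinct parents of leaves is $\Omega(\sqrt{k})$ and that this exceeds $|L|$. However $|L|=O(\varepsilon k/\mu)$ is \emph{linear} in $k$, while the number of leaf-parents in $T^*$ is at most $|T^*|/(12)\cdot \Delta^{-1}\le \tfrac{\gamma k}{12}\cdot \tfrac{1000}{\sqrt{k}}=O(\gamma\sqrt{k})$. For all sufficiently large $k$ (beyond roughly $(\gamma\mu/\varepsilon)^2$) one has $|L|\gg \sqrt{k}$, so there are not enough parents to assign one per vertex of $L$, and the Hall-type matching you describe cannot be set up. A related slip: you apply Lemma~\ref{bare} to $T$ rather than to $T^*$, so the bare paths or leaves it produces may lie entirely in $T-T^*$ and thus be unavailable during the embedding of $T^*$.

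The paper avoids this obstacle by two devices you are missing. First, it does not try to cover all of the low-degree set $X$: it chooses $v^\star\in Y:=\{v:\deg(v)\ge k\}$ minimising $|N(v)\cap X|$, and since the final host for Lemma~\ref{lem:mindeg1} lives inside $N(v^\star)$, only $X_{v^\star}:=N(v^\star)\cap X$ needs to be absorbed. Second, and crucially, in the leaf case the paper reserves a small set $Y'\subseteq Y$ and forces an independent set of leaf-parents into $Y'$; by the minimality of $v^\star$ every vertex of $Y$ has at least $|X_{v^\star}|$ neighbours in $X$, so the leaves hanging below $Y'$ can be sent into $X$ one by one until $|X_{v^\star}|$ of them are used. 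This relay through $Y$ is what replaces your parent-counting argument and makes the leaf case go through for all $k$.
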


\begin{proof}
Given $G$ and $k$, set $\varepsilon:=10^{-11}$ and note that necessarily, $n>k$. Moreover, for the complement $\bar G$ of~$G$, we have that $d(\bar G)<n-k$. Thus,
	\begin{equation}\label{G:size}2e(\bar{G})<n(n-k)\le (1+\varepsilon )k\cdot \varepsilon k\le 2\varepsilon k^2.
	\end{equation}
	
	Let $X$  be  the set of all  vertices of $G$ having degree at most $\lfloor(1-\sqrt\eps)k\rfloor$ in~$G$, and let $Y$ be the set of all vertices of $G$ having degree at least $k$ in $G$. 
Since $\deg(v)\le k-1$ for all $v\not\in Y$, we have that
$$\sum_{v\in V(G)\setminus (X\cup Y)}\deg(v)\le (k-1)|V(G)\setminus (X\cup Y)|$$ and thus, since $d(G)>k-1$ and hence $\sum_{v\in V(G)}\deg(v)> (k-1)|V(G)|$, we obtain
	\begin{equation*}\label{avg:X0Y02}(k-1)|X\cup Y|\ <\ \sum_{v\in X\cup Y}\deg(v)
		\ \le \ |X|(1-\sqrt{\eps})k+|Y|(1+\eps)k. \end{equation*}
	Therefore, 
	\begin{equation}\label{size:Y}|X| < 2\sqrt\eps|Y| < 3\sqrt\eps k.\end{equation}
	For each $v\in Y$ set $X_v:=N(v)\cap X$. Let $v^{\star}\in Y$ be a vertex that minimises $|X_v|$ among all $v\in Y$. So, 
	\begin{equation}\label{X_v}\text{for each }v\in Y, \hspace{.3cm}\deg(v,X)\ge |X_{v^\star}|.\end{equation}
	Let $T\in\mathcal T(k,\frac{\sqrt k}{1000})$. Now if $X_{v^\star}=\emptyset$, then the graph induced by $v^\star$ and a $k$-subset of $N(v^\star)$ satisfies the conditions of Lemma~\ref{lem:mindeg1}, with  $\nu:=\sqrt\eps$, and thus we can embed $T$. So, we will  from now on assume that $X_{v^\star}\not=\emptyset$.
		
	We use~Lemma~\ref{cut_off}, with  $\gamma:=168\sqrt\eps $,  to  obtain a subtree $(T^*,t^*)$ such that 
	\begin{equation}\label{mca}
	84\sqrt\eps k\le|T^*|\le 168\sqrt\eps k
	\end{equation}
	 and such that every component of $T-T^*$ is adjacent to $t^*$. We will now embed $T^*$ in a way that at least $|X_{v^\star}|$ vertices from $X$ will be used. Then, we embed the rest of $T$ into $G-X$ with the help of Lemma~\ref{bare}.
	 Before we start, we quickly prove two claims that will be helpful for the embedding of $T^*$.
	
	First, using~\eqref{size:Y} and the fact that $\delta(G)\ge \frac k2$, the following claim is easy to see. 
	
	\begin{claim}\label{paths} For every $x,x'\in V(G)$, there are more than $2^{-4}k$ internally disjoint paths of length at most $3$ connecting $x$ and $x'$.\end{claim}
	
	Second, we will see now that a useful subset of $Y$ can be `reserved' for later use.
	\begin{claim}\label{claim:leaves}There is a subset $Y'\subseteq Y\setminus\{v^\star\}$ of size at most $\lfloor5\sqrt{\eps}k\rfloor$ such that  all but at most $\lfloor2\eps k\rfloor$ vertices in $G-X$ have at least $|X|$ neighbours in $Y'$.\end{claim}
	To see this, suppose first that $|Y|\ge \lfloor 5\sqrt\eps k\rfloor+1$ and take any subset $Y'\subseteq Y\setminus\{v^\star \}$ of size $\lfloor 5\sqrt\eps k\rfloor$. Since every vertex $v$ in $G-X$ has degree at least $\lceil(1-\sqrt{\eps})k\rceil$ and since $n\le (1+\eps)k$, we know that $v$ has at least $\lceil3\sqrt\eps k\rceil\ge |X|$ neighbours in $Y'$, and we are done.  
	
	Assume now that $|Y|\le \lfloor 5\sqrt\eps k\rfloor$ and let us write $Z$ for the set of vertices in $G-X$ having less than $|X|$ neighbours in $Y\setminus\{v^\star \}$. Then one has the estimates 
	$$e(Y\setminus\{v^\star \},G)=\sum_{y\in Y\setminus\{v^\star \}}\deg(y)\ge (|Y|-1)k, $$
	and 
	$$e(Y\setminus\{v^\star \},G)=\sum_{z\in Z}\deg(z,Y\setminus\{v^\star \})+\sum_{z\not\in Z}\deg(z,Y\setminus\{v^\star \})\le |Z||X|+(n-|Z|)(|Y|-1).$$
	Therefore, as $|X|< 2\sqrt{\eps}|Y|$ by \eqref{size:Y}, and since by assumption $n\le(1+\eps)k$, we have $|Z|< 2\eps k$ and we can take $Y'=Y\setminus\{v^\star \}$. This finishes the proof of Claim~\ref{claim:leaves}.\\
	
	By applying Lemma~\ref{bare}, with $\ell =3$, we deduce that $T^*$ has either $|T^*|/12$ bare paths, each of length $3$, or it has at least $|T^*|/12$ leaves. The embedding of $T^*$ splits into two cases depending on the structure of $T^*$. 
	
	\begin{flushleft}\textbf{Case 1:} $T^*$ has  a set $\mathcal B$ of $|T^*|/12$ vertex disjoint bare paths, each of length $3$.\end{flushleft}
	
	We embed $T^*$ vertex by vertex in a pseudo-greedy fashion always avoiding $v^\star$. We start by embedding $t^*$ arbitrarily into any vertex of degree at least $(1-\sqrt\eps)k$ of $G-v^\star$. Now suppose we are about to embed a vertex $u'$ whose parent $u$ has already been embedded into a vertex $\phi(u)$. 
	If $u'$ is not the starting point of a path from $\mathcal B$ or if all of $X_{v^\star}$ is already used, we embed $u'$ greedily. Now assume that~$u'$ is the starting point of some $B\in\mathcal B$ and there is at least one unused vertex $x\in X_{v^\star}$. By Claim~\ref{paths} and since $|T^*|<2^{-4}k$, vertices $x$ and $\phi(u)$ are connected by a path~$P$ of length at most $3$ that uses only unoccupied vertices.  Embed $B$ (including $u$) into~$P$, and if $|B|>|P|$, choose its last vertices greedily.  Since by~\eqref{size:Y} and~\eqref{mca},
	$$|X|\le 3\sqrt{\eps} k< \frac{|T^*|}{12}=|\mathcal B|,$$
	we know that after embedding $T^*$ every vertex in $X_{v^\star}$ is used.

	\begin{flushleft}\textbf{Case 2:} $T^*$ has at least $|T^*|/12$ leaves.\end{flushleft}
	In this case, we cannot ensure that every vertex in $X_{v^\star}$ is used for the embedding of $T^*$, however, we can still guarantee that at least $|X_{v^\star}|$ vertices from $X$ are used.

	Because of our bound on the maximum degree of $T$, we can find a set $U^*\subseteq V(T^*)\setminus \{t^*\}$ of parents of leaves such that the number of leaves pending from $U^*$ is at least $6\sqrt\eps k$, which by~\eqref{size:Y} is greater than $2|X|$. We then take an independent set $U\subseteq U^*$ such that  for the set~$L$ of leaves pending from $U$ we have $|L|\ge |X|$, and such that $|U|\le |X|$.
	
	Starting from $t^*$ we embed $T^*$, following its natural order but leaving out the vertices from $L$. All vertices are embedded  greedily into $G-Y'$, except vertices from $U$ and their parents which are embedded in a different way. Assume $v\in V(T^*)$ is a parent of some vertex in $U$. Since $T^*$ is small, because of~\eqref{size:Y}, because of our assumption on the minimum degree of~$G$, and because of Claim~\ref{claim:leaves}, we may embed $v$ into a vertex having at least $|X|$ neighbours in~$Y'$. After this, we embed the children of $v$ in $U$ into unoccupied vertices of~$Y'$. Other children of $v$ are embedded greedily. At the end of this process we have embedded all of $T^*-L$. If we have used at least $|X_{v^\star}|$ vertices from $X$, we complete the embedding of $T^*$ greedily, so let us assume we have used less than $|X_{v^\star}|$ vertices from $X$. We embed the leaves pending from $U$ one by one into vertices from $X$ until we use $|X_{v^\star}|$ vertices, which is possible since $U$ was embedded into $Y'$ and because of~\eqref{X_v}. After this point, we simply embed the leftover leaves of $T^*$ greedily but always avoiding $v^\star$.\smallskip

	This finishes the case distinction.
Set $T':=T-(T^*-t^*)$. Denoting by $\phi$ the embedding we note that
\[|N(v^\star)\setminus (\phi(T^*)\cup X_{v^\star})|\ge k-|\phi(T^*)|-|X_{v^\star}|+|\phi(T^*)\cap X_{v^\star}|+|\phi(T^*)\setminus N(v^\star)|\ge |T'|-2.\]
Therefore, the graph $H$ induced by $v^\star$, $\phi(t^*)$ and any $(|T'|-2)-$subset of $|N(v^\star)\setminus (\phi(T^*)\cup X_{v^\star})|$ has order $|T'|$ and we may complete the embedding of $(T',t^*)$ by using Lemma~\ref{lem:mindeg1} for~$H$, with  $\nu:=86\sqrt\eps$,  fixing the image of $t^*$ as $\phi(t^*)$.
\end{proof}

\subsection{Using the regularity method}\label{sec:robust}

In this section we embed a given tree $T\in \mathcal T(k,\Delta)$ into $G$ using tools developed in~\cite{BPS1}. 
 The first auxiliary result that we need is stated as Proposition 5.1 and Remark~5.2 in~\cite{BPS1}. 
\begin{lemma}$\!\!${\rm\bf\cite{BPS1}}\label{emb:forest}
For all $\varepsilon\in (0,10^{-8})$ and $M_0,\Delta\in\NN$ there is $k_0$ such that for all $n,k_1, k_2\geq k_0$ 
	the following holds. 
	Let $G$ be an $n$-vertex graph having an $(\varepsilon,5\sqrt\eps)$-reduced graph $\mathscr R$ such that $|\mathscr R|\le M_0$ and $\mathscr R=(\mathscr A,\mathscr B)$ is connected and bipartite. If there is a subset $\mathscr V\subseteq \mathscr A$ such that
	\begin{enumerate}[(i)]
		\item $\deg(C)\ge (1+100\sqrt\eps)k_2 \cdot \frac{|\mathscr R|}n$ for all $C\in\mathscr V$; and
		\item $|\mathscr V|\ge (1+100\sqrt\eps)k_1 \cdot \frac{|\mathscr R|}n$,\end{enumerate}
then every tree $T\in\mathcal T(k,\Delta)$, with colour classes $A$ and $B$ obeying $|A|\le k_1$ and $|B|\le k_2$, can be embedded into $G$, with $A$ going to clusters in $\mathscr V$ and $B$ going to clusters in $\mathscr B$. 
\end{lemma}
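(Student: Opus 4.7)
The plan is to chop $T$ into many subtrees small enough to each fit inside a single regular pair, fix a balanced assignment of subtrees to pairs of clusters in the reduced graph, and embed them one by one, using Lemma~\ref{lem:T1} inside each assigned pair and the connectivity of $\mathscr R$ to chain the embeddings together. Write $m$ for the common cluster size. Hypotheses (i) and (ii) translate (up to the factor $m$) to two global capacity bounds that will drive everything: the total $A$-capacity of $\mathscr V$ is $|\mathscr V|\cdot m\ge (1+100\sqrt\eps)k_1$, and, for every $C\in\mathscr V$, the total $B$-capacity of its $\mathscr R$-neighbours in $\mathscr B$ is $\deg_{\mathscr R}(C)\cdot m\ge (1+100\sqrt\eps)k_2$.

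Pick a constant $\beta\ll \eps/M_0$, root $T$ at an arbitrary vertex $r$ of its colour class $A$, and by iterated application of Lemma~\ref{cut_off} decompose $T$ into subtrees $T_1,\dots,T_s$, each of order at most $\beta m$, with $r\in T_1$ and with a unique ``seed edge'' $u_iv_i$ for every $i\ge 2$, where $u_i$ lies in some $T_j$ with $j<i$ and $v_i\in T_i$; then $s=O(k/(\beta m))$ is bounded in terms of $\eps$ and $M_0$. Before embedding, fix an assignment $i\mapsto (C_i,D_i)$ with $C_i\in\mathscr V$, $D_i\in\mathscr B$, $C_iD_i\in E(\mathscr R)$, such that for every $\mathscr V$-cluster the total number of $A$-vertices of $T$ slated for it is at most $(1-10\sqrt\eps)m$, the analogous bound holds for every $\mathscr B$-cluster, and the assignment is consistent with the parent--child structure of the pieces, in the sense that whichever of $C_i,D_i$ is to contain the image of $v_i$ is $\mathscr R$-reachable from the cluster containing the image of $u_i$ via a short path. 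Then process the subtrees in order, embedding $T_i$ into the pair $(C_i,D_i)$ via Lemma~\ref{lem:T1} with $Z$ equal to the set of already-used vertices, using the moreover-clause to seat the image of $v_i$ on a neighbour of the image of $u_i$, and inserting at most $M_0$ extra vertices per step to bridge along the $\mathscr R$-path from the parent's cluster when $(C_i,D_i)$ is not directly adjacent to it.

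The crux is constructing the balanced assignment of the previous paragraph. A naive online greedy may concentrate too much $B$-load on the $\mathscr R$-neighbourhood of a single $\mathscr V$-cluster, saturating it. The idea is to fix the assignment ahead of time via a fractional/double-counting argument: distribute the $A$-load of $T$ across $\mathscr V$ proportionally to $1/|\mathscr V|$, so every $\mathscr V$-cluster receives at most $k_1/|\mathscr V|\le (1-100\sqrt\eps)m$, well below the $(1-10\sqrt\eps)m$ threshold; similarly, spread the $B$-load across the $\mathscr B$-neighbours of each chosen $\mathscr V$-cluster proportionally to $1/\deg_{\mathscr R}$, using (i) to guarantee every $\mathscr B$-cluster stays below threshold. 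The bipartiteness of $\mathscr R$ ensures the parity requirements match the bipartition of $T$, and the $100\sqrt\eps$ slack absorbs both the $O_{\eps,M_0}(1)$ overhead coming from the routing insertions and from the iteration of Lemma~\ref{cut_off}, and the $\sqrt\eps$-slack required by each application of Lemma~\ref{lem:T1}. The bound $\Delta(T)\le\Delta$ is used in the decomposition step to keep each $T_i$'s seed vertex from carrying too many children at once.
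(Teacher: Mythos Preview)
This lemma is not proved in the paper under review: the authors import it from~\cite{BPS1} (where it is Proposition~5.1 together with Remark~5.2) and give no argument of their own. There is therefore no in-paper proof to compare your attempt against.

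Your outline follows the standard regularity template---cut $T$ into sub-linear pieces, pre-assign pieces to reduced-graph edges so that no cluster is overloaded, then embed piece by piece via Lemma~\ref{lem:T1}---and this is almost certainly also the approach taken in~\cite{BPS1}. The fractional load-balancing you describe is sound: spreading the pieces uniformly over $\mathscr V$ caps the $A$-load on each $C\in\mathscr V$ at $|A|/|\mathscr V|\le m/(1+100\sqrt\eps)$, and then spreading each $C$'s share of the $B$-load uniformly over $N_{\mathscr R}(C)$ caps the $B$-load on each $D\in\mathscr B$ at $\sum_{C\sim D}b_C/\deg_{\mathscr R}(C)\le |B|\cdot m/((1+100\sqrt\eps)k_2)\le m/(1+100\sqrt\eps)$.

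The one place where the sketch is genuinely thin is the bridging step. The ``at most $M_0$ extra vertices'' you insert to walk from the parent's cluster to $(C_i,D_i)$ must be vertices of $T$---specifically, they must form a path in $T_i$ starting at the seed $v_i$ whose bipartition parity matches the $\mathscr A/\mathscr B$ alternation along the $\mathscr R$-path. You have not argued that such a path of the required length exists (small or shallow pieces need not contain one), and since $\mathscr R$ is bipartite you cannot adjust parity by adding a single step. This is exactly where the real work in such proofs lies, and your sketch does not yet supply it. As a minor point, iterated Lemma~\ref{cut_off} is an awkward decomposition tool here, since each call peels off one subtree rather than producing a partition; a direct ``seeds'' lemma such as Lemma~\ref{prop:cut2} (stated later in the paper) would serve you better.
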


Now we show that Lemma~\ref{emb:forest} is enough for embedding large trees in large graphs having a reduced graph which is connected and bipartite. 

\begin{lemma}\label{prop:robust_bip}
For all $\Delta\ge 2$, $M_0\in\mathbb N$, $\delta, \eps, \eta\in (0,1)$ with $\varepsilon\ll\eta\le\frac{\delta^2}{10^4}$ there is $k_0\in\mathbb{N}$ such that for all $k\ge k_0$, $n\in\mathbb N$ with $\delta^{-1}k\ge n\ge k$ the following holds.\\	
	Let $G$ be an $n$-vertex graph with an $(\varepsilon,\eta)$-regular partition and corresponding reduced graph~$\mathscr R$, with $|\mathscr R|\le M_0$, which is connected and bipartite with parts $\mathscr A$ and $\mathscr B$ such that $|\mathscr A|\ge |\mathscr B|$. If
	\begin{enumerate}[(i)]
		\item  $d(G)\ge(1-3\sqrt\eta)k$; \label{lagos1}
		\item $\delta(G)\ge(1-3\sqrt\eta)\tfrac{k}{2}$; and\label{lagos2}
		\item $|\bigcup\mathscr A|\ge (1+\delta)k$,\label{sucodetangerina}
	\end{enumerate}
	then $G$ contains every tree $T\in\mathcal T(k,\Delta)$.
\end{lemma}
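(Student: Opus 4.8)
The plan is to reduce Lemma~\ref{prop:robust_bip} to Lemma~\ref{emb:forest} by checking that the hypotheses (i) and (ii) of the latter can be met for suitable $k_1,k_2$ with $k_1+k_2\ge k$ and $k_1,k_2$ at least as large as the two colour classes of any given $T\in\mathcal T(k,\Delta)$. First I would set up the constant hierarchy: pick $\varepsilon'\ll \eta$ so that the conclusion of Lemma~\ref{emb:forest} applies (with the $\varepsilon$ there playing the role of our $\varepsilon$, and $M_0$, $\Delta$ as given), and ensure $k_0$ is large enough for both Lemma~\ref{emb:forest} and the various $o(k)$ error terms below to be absorbed. Observe that $\mathscr R$ is an $(\varepsilon,\eta)$-reduced graph with $\eta\le\frac{\delta^2}{10^4}$, so in particular $\eta\ge 25\varepsilon$ and $\mathscr R$ is a valid $(\varepsilon,5\sqrt\varepsilon)$-reduced graph once $\varepsilon\ll\eta$, so Lemma~\ref{emb:forest} is indeed applicable to the pair $(G,\mathscr R)$.

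The core computation is to produce the set $\mathscr V\subseteq\mathscr A$. Write $m=|\bigcup V(\mathscr R)|/|\mathscr R|$ for the common cluster size and $N=|\bigcup V(\mathscr R)|\le n$. For the degree threshold in Lemma~\ref{emb:forest}(i), I would use Fact~\ref{fact:3}\eqref{fact:3,1}: in the bipartite reduced graph $\mathscr R$, every cluster $C\in\mathscr A$ has $\deg_{\mathscr R}(C)\ge \frac{1}{|C|}\sum_{v\in C}\deg_{G'}(v,\bigcup\mathscr B)\cdot\frac{|\mathscr R|}{n}$, and by hypothesis~\eqref{lagos2} together with the degree loss in the regularity lemma, $\delta(G')\ge(1-3\sqrt\eta)\frac k2-(\eta+\varepsilon)n\ge(1-c\sqrt\eta)\frac k2$ for a suitable constant $c$; this gives $\deg_{\mathscr R}(C)\ge(1-c\sqrt\eta)\frac k2\cdot\frac{|\mathscr R|}{n}$ for \emph{every} cluster in $\mathscr A$. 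That is not quite the $(1+100\sqrt\varepsilon)k_2\cdot\frac{|\mathscr R|}{n}$ we want with $k_2$ close to $k/2$; so the right move is to take $\mathscr V$ to be the clusters of \emph{above-average} degree. Using Fact~\ref{fact:3}\eqref{fact:3,1} in the averaged form $d(\mathscr R)\ge d(G')\cdot\frac{|\mathscr R|}{n}\ge(1-c\sqrt\eta)k\cdot\frac{|\mathscr R|}{n}$ and the bipartiteness $|\mathscr A|\ge|\mathscr B|$, a short averaging argument over $\mathscr A$ shows that a large sub-collection $\mathscr V\subseteq\mathscr A$ has $\deg_{\mathscr R}(C)\ge(1-c'\sqrt\eta)\frac k2\cdot\frac{|\mathscr R|}{n}$ while simultaneously $|\mathscr V|\ge(1-c'\sqrt\eta)\cdot\frac{|\bigcup\mathscr A|}{m}\ge(1-c'\sqrt\eta)(1+\delta)k\cdot\frac{|\mathscr R|}{n}$ by hypothesis~\eqref{sucodetangerina}. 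Then I would set $k_2:=\frac{k}{2}$ (shrunk by a tiny factor so that $(1+100\sqrt\varepsilon)k_2\le(1-c'\sqrt\eta)\frac k2\cdot n/n$—here the point is $100\sqrt\varepsilon\ll\sqrt\eta\ll\delta$, so the loss is comfortably covered) and $k_1:=(1+\frac\delta2)k$; since $\eta\le\delta^2/10^4$ gives $c'\sqrt\eta\le\delta/10$, both conditions (i) and (ii) of Lemma~\ref{emb:forest} hold after translating the $\frac{|\mathscr R|}{n}$ normalisations.

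Finally, for any $T\in\mathcal T(k,\Delta)$ with colour classes $A,B$, we have $|A|+|B|=k+1$ and, by Fact~\ref{thisisafact}, $\min\{|A|,|B|\}\ge k/\Delta$, hence $\max\{|A|,|B|\}\le(1-\frac1\Delta)k+1\le(1+\frac\delta2)k=k_1$ (using $k\ge k_0$ large and $\delta,\Delta$ fixed), and of course $\min\{|A|,|B|\}\le\frac{k+1}{2}\le k_2$ after the tiny adjustment, provided we are careful to assign the \emph{larger} class of $T$ to $k_1$ and the smaller to $k_2$; since $k_1\le n\le\delta^{-1}k$ and $\mathscr V\subseteq\mathscr A$, the sizes are consistent. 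Lemma~\ref{emb:forest} then embeds $T$ into $G$, completing the proof. The main obstacle I anticipate is bookkeeping the chain of multiplicative error terms—the regularity degree-loss $(\eta+\varepsilon)n$, the averaging loss producing $\mathscr V$, the switch between the $\frac{|\mathscr R|}{n}$-normalised quantities in Fact~\ref{fact:3} and the $k_1,k_2$ thresholds in Lemma~\ref{emb:forest}, and the $(1+100\sqrt\varepsilon)$ slack—and verifying they all fit inside the budget $\eta\le\delta^2/10^4$ and $\varepsilon\ll\eta$; none of it is deep, but the inequalities must be arranged so that $k_2$ stays genuinely below $k/2\cdot(1-c'\sqrt\eta)/(1+100\sqrt\varepsilon)$ while $k_1+k_2\ge k+1$, which forces $k_1$ to absorb essentially all of the deficit, and this is exactly what hypothesis~\eqref{sucodetangerina} (the $(1+\delta)k$ bound on $|\bigcup\mathscr A|$) is there to supply.
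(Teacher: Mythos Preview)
Your reduction to Lemma~\ref{emb:forest} is the right idea, and the case where the tree is unbalanced goes through much as you describe. But there is a genuine gap when the tree is nearly balanced. You set $k_2$ to be $\tfrac{k}{2}$ \emph{shrunk} by a factor $(1-c'\sqrt\eta)/(1+100\sqrt\varepsilon)$, so $k_2<\tfrac k2$. Then you assert $\min\{|A|,|B|\}\le\frac{k+1}{2}\le k_2$, which is false: for a path on $k+1$ vertices the smaller class has $\lfloor(k+1)/2\rfloor$ vertices, strictly larger than your $k_2$. No amount of ``tiny adjustment'' fixes this, because the degree bound you have for clusters in $\mathscr V$ is only $(1-c'\sqrt\eta)\tfrac k2\cdot\tfrac{|\mathscr R|}{n}$, whereas Lemma~\ref{emb:forest}(i) needs $(1+100\sqrt\varepsilon)k_2\cdot\tfrac{|\mathscr R|}{n}$; you cannot simultaneously have $k_2\ge \frac{k+1}{2}$ and $(1+100\sqrt\varepsilon)k_2\le(1-c'\sqrt\eta)\tfrac k2$.

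The paper's proof splits on the shape of $T$. If the smaller class $|B|\le(1-4\sqrt\eta)\tfrac k2$, then the minimum-degree hypothesis alone gives every cluster degree at least $(1+100\sqrt\varepsilon)|B|\cdot\tfrac{|\mathscr R|}{n}$, and one takes $\mathscr V=\mathscr A$, using~(iii) for the size. If instead both classes lie in $(1\pm4\sqrt\eta)\tfrac k2$, one needs clusters of degree \emph{above} $(1+\sqrt\eta)\tfrac k2\cdot\tfrac{|\mathscr R|}{n}$. This does not follow from averaging over $\mathscr A$ alone (the average degree in $\mathscr A$ is $e(\mathscr R)/|\mathscr A|$, which can be well below $\tfrac k2\cdot\tfrac{|\mathscr R|}{n}$ when $|\mathscr A|$ is large). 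Instead, the paper defines $\mathscr V_A\subseteq\mathscr A$ and $\mathscr V_B\subseteq\mathscr B$ as the high-degree clusters on each side and shows, via an edge-count using $d(\mathscr R)\ge(1-3\sqrt\eta)k\cdot\tfrac{|\mathscr R|}{n}$ together with the lower bound $|\mathscr A|,|\mathscr B|\ge(1+\tfrac\delta4)\tfrac k2\cdot\tfrac{|\mathscr R|}{n}$, that $|\mathscr V_A|+|\mathscr V_B|\ge(1+\sqrt\eta)k\cdot\tfrac{|\mathscr R|}{n}$; hence one of the two sides furnishes the required $\mathscr V$. This second case is the missing ingredient in your plan.
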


\begin{proof} Given $\Delta,M_0$, $\eps$ and $\eta$, we choose $k_0$ as the output of Lemma~\ref{emb:forest}. Given $G$  as in Lemma~\ref{prop:robust_bip}, we suppose for contradiction  that some $T\in\mathcal T(k,\Delta)$ cannot be embedded into~$G$. Set $$t=\frac{|\mathscr R|}{n}$$ and let $|\bigcup \mathscr A|=a$ and $|\bigcup\mathscr B|=b$. 
We claim that 
\begin{equation}\label{bbb}
b\ge \Big(1+\frac{\delta}4\Big)\frac{k}{2}.
\end{equation}
Indeed, otherwise can use~\eqref{lagos1} to calculate that 
	\begin{align*}
	 (1-3\sqrt\eta)kn\le 2e(G)\le 2ab
	 \le\Big(1+\frac{\delta}4\Big)ka\le\Big(1+\frac{\delta}4\Big)k\cdot \Big(1-\frac\delta{4}\Big)n\le \Big(1-\frac{\delta^2}{16}\Big)kn
	 \end{align*}
	where the second to last inequality follows from the fact that because of~\eqref{lagos2} we have $a=n-b\le n- (1-3\sqrt\eta)\frac k2\le (1-\frac\delta{4})n$. But this is a contradiction to our assumptions on $\eta$ and~$\delta$. This proves~\eqref{bbb},
		and so, we also know that  	
\begin{equation}\label{offthegrid}
		|\mathscr A|\ge|\mathscr B|\ge \Big(1+\frac\delta4\Big)\frac k2t. 
\end{equation}		
		Now we turn to the tree $T$. Let $A$ and $B$ denote its colour classes, and assume $|A|\ge |B|$. Moreover, we may assume that  	
\begin{equation}\label{bip:sizes}(1-4\sqrt\eta)\frac k2<|B|\le \frac{k+1}2\hspace{.5cm}\text{and}\hspace{.5cm}\frac{k+1}2\le |A|\le (1+4\sqrt\eta)\frac k2.\end{equation}
as otherwise,  since $\eps\ll\eta$ we have $\delta(G)\ge (1+100\sqrt\eps)|B|$ and so, by~\eqref{sucodetangerina}, we can use Lemma~\ref{emb:forest} to embed $T$. 

Let $\mathscr V_A\subseteq \mathscr A$ and $\mathscr V_B\subseteq \mathscr B$ be the sets of all clusters of degree at least $(1+\sqrt\eta)\frac k2t$. We claim that 
\begin{equation}\label{VAVB}
|\mathscr V_A|+|\mathscr V_B|\ge (1+\sqrt\eta)kt.
\end{equation}
 Suppose this is not the case. Then Fact~\ref{fact:3}~\eqref{fact:3,1}, condition~\eqref{lagos1}, and~\eqref{offthegrid} imply that 
\begin{eqnarray*}(1-3\sqrt\eta)kt|\mathscr R| &\le& 2e(\mathscr R)\\ &\le& |\mathscr V_A||\mathscr B|+|\mathscr V_B||\mathscr A|+(1+\sqrt\eta)\frac k2t\Big(|\mathscr R|-|\mathscr V_A|-|\mathscr V_B|\Big)\\
&=&(1+\sqrt\eta)\frac k2t|\mathscr R|+|\mathscr V_A|\Big(|\mathscr B|-(1+\sqrt\eta)\frac k2t\Big)+|\mathscr V_B|\Big(|\mathscr A|-(1+\sqrt\eta)\frac k2t\Big)\\
&<&(1+\sqrt\eta)\frac k2t|\mathscr R|+ (1+\sqrt\eta)kt\cdot \Big(\frac\delta8-\sqrt\eta\Big)\frac k2t.\end{eqnarray*}
 Therefore, and since $n\ge k$, we have
 $$\frac 12t\cdot k\le (1-7\sqrt\eta)tn =(1-7\sqrt\eta)|\mathscr R| <  (1+\sqrt\eta)\Big(\frac\delta4-\sqrt\eta\Big)kt\le \frac 32\cdot \frac\delta4 kt,$$
 a contradiction. So, assuming that $|\mathscr V_A|\ge (1+\sqrt\eta)\frac k2 t$, by Lemma~\ref{emb:forest} we can embed $T$ into~$G$, with $A$ going to clusters in $\mathscr V_A$ and $B$ going to clusters in $\mathscr B$.
 \end{proof}

Now we turn to the case when the reduced graph is connected, non-bipartite and large. 

\begin{lemma}[\cite{BPS1}, Proposition 5.8]\label{lem:nonbipcon}
For all $\Delta\ge 2$ and $\eps\in(0,10^{-8})$, there is $k_0\in\mathbb N$ such that for all $n,k\ge k_0$ and for every $n$-vertex graph $G$ the following holds. If $G$  has an $(\eps,5\sqrt\eps)$-regular partition, and the corresponding reduced graph $\mathscr R$ has a non-bipartite connected component~$\mathscr C$ which contains a matching with at least $(1+100\sqrt\eps)\frac k2 \frac{|\mathscr R|}{n}$ edges, then~$G$ contains every tree $T\in\mathcal T(k,\Delta)$. 
\end{lemma}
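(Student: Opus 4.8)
The plan is to reduce to the bipartite setting handled by Lemma~\ref{emb:forest} by exploiting the fact that a non-bipartite connected graph contains an odd cycle, which lets us "double" a large matching into a bipartite-like structure with the same endpoints but doubled cluster set. Concretely, let $\mathscr C$ be the non-bipartite connected component of $\mathscr R$ and let $\mathscr M$ be the guaranteed matching in $\mathscr C$ with $|\mathscr M|\ge (1+100\sqrt\eps)\frac k2\cdot\frac{|\mathscr R|}{n}$ edges. First I would fix an odd closed walk in $\mathscr C$ and use it, together with the connectivity of $\mathscr C$, to build an auxiliary bipartite graph on (a subset of) the clusters of $\mathscr C$: the idea is the standard trick that a connected non-bipartite graph $\mathscr C$ on cluster set $\mathscr X$ has a bipartite "blow-up" in which each cluster appears on both sides, formally one can find a bipartite reduced graph $\mathscr R'=(\mathscr A,\mathscr B)$ whose clusters are obtained by splitting each $V_i\in\mathscr C$ into two halves $V_i^A,V_i^B$, with $\mathscr R'$ connected and with density bounds only mildly worse than $\mathscr R$ (here one uses Fact~\ref{fact:1}\eqref{fact:1,2} to see that halving clusters keeps pairs regular with parameter $2\eps$ and density $d\pm\eps$, so still $(2\eps,5\sqrt\eps-\eps)$-regular, and $5\sqrt{2\eps}$-type bounds absorb the loss). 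The odd cycle is what guarantees one can orient/two-colour the split consistently so that both $V_i^A\in\mathscr A$ and $V_i^B\in\mathscr B$ for the clusters we care about, in particular for the clusters spanned by $\mathscr M$.

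Next I would transfer the degree and size conditions. Each edge $V_iV_j\in\mathscr M$ gives rise, in $\mathscr R'$, to the edges $V_i^AV_j^B$ and $V_i^BV_j^A$; so the $2|\mathscr M|$ clusters $\{V_i^A,V_i^B: V_iV_j\in\mathscr M\}$ all have $\mathscr R'$-degree at least, say, $1$ — but we need a stronger statement: we want a set $\mathscr V\subseteq\mathscr A$ of size at least $(1+100\sqrt\eps)k_1\cdot\frac{|\mathscr R'|}{n}$ consisting of clusters of $\mathscr R'$-degree at least $(1+100\sqrt\eps)k_2\cdot\frac{|\mathscr R'|}{n}$, with $k_1,k_2$ the colour class sizes of $T$. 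Since $|\mathscr R'|=2|\mathscr C|\le 2|\mathscr R|$ and clusters of $\mathscr R'$ have half the size of clusters of $\mathscr R$, the normalisation factor $\frac{|\mathscr R'|}{n}$ equals $\frac{|\mathscr R|}{n'}$ where $n'$ is the vertex count after halving — so effectively the threshold is unchanged. The point is that a matching cluster $V_i^A$ is adjacent in $\mathscr R'$ to $V_j^B$ where $V_iV_j\in\mathscr M$, and because $\mathscr C$ is connected one can moreover route through $\mathscr C$ to boost degrees; but actually the cleanest route is: each endpoint-cluster of $\mathscr M$ inherits, via the bipartite double cover construction, an $\mathscr R'$-degree at least that of its original in $\mathscr R$ minus a negligible term, and the matching being large means $|\mathscr M|\ge(1+100\sqrt\eps)\frac k2\frac{|\mathscr R|}n$ translates directly to $|\mathscr V_A|+|\mathscr V_B|\ge(1+100\sqrt\eps)k\frac{|\mathscr R'|}{n'}$ where $\mathscr V_A=\{V_i^A:V_i\in V(\mathscr M)\}$ etc. As $T$ has colour classes $A,B$ with $|A|+|B|=k+1$ and $\min\{|A|,|B|\}\ge k/\Delta$ by Fact~\ref{thisisafact}, after symmetrising (matching edges give both $V_i^A$ and $V_i^B$, so we may assume $|\mathscr V_A|\ge(1+100\sqrt\eps)|A|\frac{|\mathscr R'|}{n'}$) we are exactly in the hypothesis of Lemma~\ref{emb:forest}, which embeds $T$. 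Finally I would note that Lemma~\ref{emb:forest} requires $k_0$ large in terms of $\eps,M_0,\Delta$, which is fine: we pass $M_0:=2M_0'$ for whatever bound $M_0'$ on $|\mathscr R|$ we have, but actually $|\mathscr R|$ is not a priori bounded here — so instead I would first apply a clean-up removing all clusters outside $\mathscr C$, work inside $\mathscr C$ alone (whose size is at most $n/|C|$, still unbounded) — hmm, this needs care; the fix is that Lemma~\ref{emb:forest} as stated needs $|\mathscr R|\le M_0$, so I would restrict $\mathscr R'$ to a bounded-size subgraph containing $\mathscr V$ and one common neighbour structure, discarding the rest, which is allowed since we only need the hypotheses (i)–(ii) of Lemma~\ref{emb:forest} on the restricted graph.

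The main obstacle I anticipate is precisely this last point — controlling the size of the reduced graph and making the "bipartite double" construction rigorous while preserving both the degree lower bounds and the connectivity needed by Lemma~\ref{emb:forest}. The odd-cycle-to-bipartite-cover argument is standard in spirit, but one must be careful that (a) the halving of clusters keeps all relevant pairs $(2\eps,5\sqrt\eps')$-regular for a suitable $\eps'$ comparable to $\eps$, so that the $100\sqrt\eps$ slacks still dominate the errors; (b) the set $\mathscr V$ of high-degree clusters on the $\mathscr A$-side genuinely has the required size, which is where the factor-of-$2$ bookkeeping between $|\mathscr M|$, $|\mathscr R'|$, and $n'$ must be done exactly; and (c) connectivity of the restricted bipartite reduced graph — this should follow because $\mathscr C$ is connected and the double cover of a connected non-bipartite graph is connected. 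Given these, the embedding itself is entirely outsourced to Lemma~\ref{emb:forest}.
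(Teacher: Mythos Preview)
The paper does not prove this lemma at all: it is quoted verbatim as Proposition~5.8 from~\cite{BPS1} and then immediately used. So there is no ``paper's own proof'' to compare against; the relevant question is whether your sketch stands on its own.

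Your double-cover idea is the right intuition for why non-bipartiteness helps, but the reduction to Lemma~\ref{emb:forest} has a genuine gap. That lemma requires, for every cluster $C\in\mathscr V$, the degree bound
\[
\deg_{\mathscr R'}(C)\ \ge\ (1+100\sqrt\eps)\,k_2\cdot\frac{|\mathscr R'|}{n'},
\]
where $k_2$ bounds the \emph{larger} colour class of $T$ (or at least one of them), so $k_2$ can be anywhere up to roughly $k$. The hypothesis of Lemma~\ref{lem:nonbipcon} gives you only a large matching in $\mathscr C$; it says nothing about the degrees of the matched clusters. In the bipartite double cover the degree of $V_i^A$ equals $\deg_{\mathscr C}(V_i)$, which could be as small as~$1$. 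So condition~(i) of Lemma~\ref{emb:forest} simply cannot be verified from the matching alone, and your sentence ``each endpoint-cluster of $\mathscr M$ inherits \dots an $\mathscr R'$-degree at least that of its original in $\mathscr R$'' is true but useless here, because no lower bound on that original degree is available. Swapping the roles of $k_1$ and $k_2$ does not help either: then condition~(ii) fails, since $|\mathscr V_A|$ corresponds to roughly $k$ vertices of $G$ while you would need roughly $2|C_2|\ge k+1$.

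What the matching \emph{does} give you is space, not degree: about $(1+100\sqrt\eps)k$ vertices of $G$ sitting in regular pairs. The standard route (and the one taken in~\cite{BPS1}) is therefore a direct embedding rather than a reduction to Lemma~\ref{emb:forest}: cut $T$ into pieces of size $\ll$ cluster size (via something like Lemma~\ref{prop:cut2}), thread the seeds through a connecting structure in $\mathscr C$, and embed each small piece into a matching edge using Lemma~\ref{lem:T1}. The odd cycle is used not to build a double cover but to let you control, piece by piece, which side of a matching edge receives which colour class of the piece, so that the two clusters of every matching edge fill up at the same rate. That balancing is exactly what the non-bipartiteness buys and what a bipartite reduced graph could not guarantee. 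Your obstacle~(c) about connectivity of the double cover is a red herring; the real obstacle is the missing degree condition, and it is fatal for the Lemma~\ref{emb:forest} route.
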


With the help of Lemma~\ref{lem:nonbipcon} we can derive some useful information on the structure of the reduced graph of~$G$ if it is connected and non-bipartite, and $G$ fails to contain a copy of some tree $T\in \mathcal T(k,\Delta)$. 

\begin{lemma}\label{lem:str_nonbip}
For all $\Delta\ge 2$, $M_0\in \mathbb N$,  $\delta, \eps, \eta\in(0,1)$ with $\eps\ll \eta\le\frac{\delta^4}{10^{8}}$, there is $k_0\in\mathbb{N}$ such that for all $k,n\ge k_0$ with $\delta^{-1}k\ge n\ge (1+\delta)k$ the following holds. Let $G$ be an $n$-vertex graph that admits an $(\varepsilon,\eta)$-regular partition into $M_0$ parts, and assume the corresponding $(\varepsilon,\eta)$-reduced graph is connected and non-bipartite. If furthermore,
	\begin{enumerate}[(i)]
		\item  $d(G)\ge(1-3\sqrt\eta)k$; and
		\item $\delta(G)\ge(1-3\sqrt\eta)\tfrac{k}{2}$,
	\end{enumerate}
and there is a $T\in\mathcal T(k,\Delta)$ that cannot be embedded into $G$, then $G$ has a subgraph $G'\subseteq G$ of size $|G'|\ge |G|-M_0$ such that there is a  partition $V(G')=I\cup V_1\cup V_2$ with
	\begin{enumerate}[(a)]
		\item\label{str:Vismall} $|V_i|=(1\pm3\sqrt\eta)\frac{k}{2}$ for $i=1,2$;
		\item\label{str:I}\label{str:edges} $I$ is an independent set in $G'$ and there are no edges between $I$ and $V_2$ in $G'$;
		\item\label{str:deg1} $\deg_{G'}(x)\ge (1-5\sqrt[4]\eta)n$ for at least $(1-4\sqrt[4]\eta)|V_1|$ vertices $x\in V_1$;
		\item\label{str:deg2} $\deg_{G'}(y)\ge (1-3\sqrt[8]\eta)k$ for at least $(1-2\sqrt[8]\eta)|V_2|$ vertices $y\in V_2$.
	\end{enumerate}
\end{lemma}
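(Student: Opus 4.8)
The plan is to apply Lemma~\ref{lem:matching2} to the graph $G$ and then extract from the resulting matching--independent-set decomposition of the reduced graph the required partition of (a subgraph of) $G$, using Lemma~\ref{lem:nonbipcon} to bound the size of the matching. First, I would fix the hierarchy of constants $\eps\ll\eta\le\delta^4/10^8$ and take $k_0$ large enough to accommodate all the applications of earlier results; note that since $n\ge(1+\delta)k$ and $\delta(G)\ge(1-3\sqrt\eta)k/2$, the graph $G$ on $n\ge 2\cdot\frac{k}{3}+\ell$ vertices (with $\ell=M_0$ the number of parts) satisfies the hypotheses of Lemma~\ref{lem:matching2} with $t=\frac k3$, say; more carefully I would pick $t$ so that $t+\ell\le(1-3\sqrt\eta)\frac k2$, e.g.\ $t=(1-4\sqrt\eta)\frac k2$. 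This gives a subgraph $G'$ with $|G'|\ge n-M_0$ admitting a $(5\eps,\eta-\eps)$-regular partition into $2M_0$ parts, whose reduced graph $\mathscr R'$ contains a matching $\mathscr M$ and an independent family $\mathscr I$ disjoint from $\mathscr M$ covering all clusters, with $|\bigcup V(\mathscr M)|\ge 2t$, and a partition $V(\mathscr M)=\mathscr V_1\cup\mathscr V_2$ with $N_{\mathscr R'}(\mathscr I)\subseteq\mathscr V_1$ and every matching edge crossing between $\mathscr V_1$ and $\mathscr V_2$.

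Next I would argue that $\mathscr M$ cannot be too large. If $\mathscr M$ had at least $(1+100\sqrt{5\eps})\frac k2\cdot\frac{|\mathscr R'|}{n}$ edges, then since $\mathscr R'$ is connected (it is essentially the old reduced graph refined, and connectivity is preserved, or one passes to the component containing most clusters) and non-bipartite — here I would need to check that refining the partition and passing to $G'$ does not destroy non-bipartiteness; if it does, the reduced graph would be bipartite and Lemma~\ref{prop:robust_bip} applied with an appropriate $\delta'$ would embed $T$, a contradiction — Lemma~\ref{lem:nonbipcon} would embed $T$ into $G'\subseteq G$, contradicting the assumption. Hence $|\bigcup V(\mathscr M)|=(1\pm O(\sqrt\eps))k$, and combined with $|\bigcup V(\mathscr M)|\ge 2t=(1-4\sqrt\eta)k$ we get $|\bigcup V(\mathscr M)|=(1\pm3\sqrt\eta)k$ (absorbing the $\eps$-errors into the $\eta$-errors since $\eps\ll\eta$). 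Now set $V_1=\bigcup\mathscr V_1$, $V_2=\bigcup\mathscr V_2$, $I=\bigcup(\bigcup\mathscr I)$. Since every matching edge crosses, $|\mathscr V_1|=|\mathscr V_2|=\frac12|V(\mathscr M)|$, giving $|V_i|=(1\pm3\sqrt\eta)\frac k2$ as in~\eqref{str:Vismall}. Property~\eqref{str:I} follows because $\mathscr I$ is an independent family (no edges inside $I$ of $G'$, up to the regularity partition being edge-respecting — clusters are independent and there are no regular pairs of positive density inside $\mathscr I$) and $N_{\mathscr R'}(\mathscr I)\subseteq\mathscr V_1$ forces no edges between $I$ and $V_2$ in $G'$.

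Finally, for the degree conditions~\eqref{str:deg1} and~\eqref{str:deg2} I would use Fact~\ref{fact:3}\eqref{fact:3,1} together with the average/minimum degree hypotheses, in the spirit of the computation in the proof of Lemma~\ref{prop:robust_bip}. A counting argument on $e(\mathscr R')$ using $d(G)\ge(1-3\sqrt\eta)k$ shows that almost all clusters have degree close to the maximum possible; since $N_{\mathscr R'}(\mathscr I)\subseteq\mathscr V_1$, clusters in $\mathscr V_2$ can only see $V(\mathscr M)$, so their vertices have degree at most about $k$ in $G'$, and the average-degree bound forces all but a $\sqrt[8]\eta$-fraction of $V_2$ to have degree $\ge(1-3\sqrt[8]\eta)k$, giving~\eqref{str:deg2}. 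For~\eqref{str:deg1}: the high-degree clusters that are \emph{not} in $\mathscr V_2$ must lie in $\mathscr V_1$ or $\mathscr I$, and by a similar averaging (now comparing total degree in $\mathscr R'$ against $(1-3\sqrt\eta)kt|\mathscr R'|$ and using that $\mathscr V_2$-clusters contribute $\le\frac k2 t$ each to the $\mathscr V_1$-side) one deduces that almost all $\mathscr V_1$-clusters have degree close to $|\mathscr R'|$, i.e.\ their vertices have degree $\ge(1-5\sqrt[4]\eta)n$ in $G'$, using Fact~\ref{fact:3}\eqref{fact:3,1} in the reverse direction and translating cluster-degrees back to vertex-degrees (losing the usual $\eps n$ and $\eta n$ errors from regularity, absorbed into $\sqrt[4]\eta$). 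The nested radicals $\sqrt\eta,\sqrt[4]\eta,\sqrt[8]\eta$ in the statement are exactly the slack needed to absorb the successive regularity and averaging errors at each of these three levels.

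The main obstacle I expect is bookkeeping the error terms so they genuinely telescope into $\sqrt\eta$, $\sqrt[4]\eta$, $\sqrt[8]\eta$ respectively — in particular making sure the step from ``cluster has high degree in $\mathscr R'$'' to ``almost every vertex of the cluster has high degree in $G'$'' (via Fact~\ref{fact:3}\eqref{fact:3,1}, which only gives an average and an inequality in one direction) is done correctly, possibly requiring a supplementary averaging over the vertices of each good cluster. A secondary technical point is verifying that connectivity and non-bipartiteness of the reduced graph survive the refinement in Lemma~\ref{lem:matching2} (or handling the bipartite case separately via Lemma~\ref{prop:robust_bip}), and that $|\mathscr R'|/n$ is close enough to the old ratio that the threshold in Lemma~\ref{lem:nonbipcon} is the one we want.
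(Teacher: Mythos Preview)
Your plan is correct and matches the paper's proof almost exactly: apply Lemma~\ref{lem:matching2} with $t=(1-3\sqrt\eta)\tfrac{k}{2}$, invoke Lemma~\ref{lem:nonbipcon} in contrapositive to cap $|\mathscr V_i|\le(1+\eta)\tfrac{k}{2}\tfrac{|\mathscr R|}{n}$, and read off (a) and (b) from the matching--independent-set structure. The one place where the paper proceeds more simply than you propose is the derivation of (c) and (d). Rather than working at the cluster level and translating back via Fact~\ref{fact:3}\eqref{fact:3,1} (which, as you note, only gives averages and would force a second round of averaging inside each cluster), the paper works directly with vertex degrees in $G'$: writing $d_A$ for the average $G'$-degree over a set $A\subseteq V(G')$, one has $d_I\le |V_1|\le(1+\eta)\tfrac{k}{2}$ and $d_{V_2}\le |V_1|+|V_2|\le(1+\eta)k$ from (b), while $2e(G')\ge(1-4\sqrt\eta)kn$; plugging in $2e(G')=|I|d_I+|V_1|d_{V_1}+|V_2|d_{V_2}$ and bounding $|I|\le n-(1-3\sqrt\eta)k$ yields $d_{V_1}+d_{V_2}\ge(1-8\sqrt\eta)n+(1-3\sqrt\eta)k$, hence $d_{V_1}\ge(1-12\sqrt\eta)n$ and (using $n\le\delta^{-1}k$) $d_{V_2}\ge(1-\sqrt[4]\eta)k$. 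A single application of the concentration Lemma~\ref{lem:con2} to each of $V_1,V_2$ then gives (c) and (d). This sidesteps the cluster-to-vertex translation obstacle you flagged entirely. Your secondary concern about non-bipartiteness surviving the refinement is legitimate; the paper does not comment on it, but it is routine (splitting each cluster in two turns every edge of the old reduced graph into a $K_{2,2}$ in the new one, so odd cycles persist).
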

\begin{proof}
	Let $k_0\ge \frac{M_0}{\eps}$ be at least as large as the output of Lemma~\ref{lem:nonbipcon} for $\frac\eps5$ and $\Delta$. Applying Lemma~\ref{lem:matching2} to $G$, with $\ell=M_0$ and $t=(1-3\sqrt\eta)\frac k2$, we find a subgraph $G'$ of size $|G'|\ge n-M_0$  that admits an $(5\eps,\frac \eta 2)$-regular partition. Moreover, the  corresponding reduced graph $\mathscr R$ contains a matching $\mathscr M$ and a disjoint independent set $\mathscr I$ such that $V(\mathscr R)=\mathscr I\cup V(\mathscr M)=\mathscr I\cup\mathscr V_1\cup \mathscr V_2$ and $N_{\mathscr R}(\mathscr I)\subseteq \mathscr V_1$. 
	
	Letting $I=\bigcup \mathscr I$ and $V_i=\bigcup\mathscr V_i$ for $i=1,2$ we have~\eqref{str:I}. Furthermore, because of Lemma~\ref{lem:nonbipcon} we know that $|\mathscr V_i|\le(1+\eta)\frac k2\tfrac{|\mathscr R|}{n}$ and thus $| V_i|\le (1+\eta)\tfrac k2$ for $i=1,2$. Therefore, and because of condition~(ii) we have~\eqref{str:Vismall}. 
	
	In order to see~\eqref{str:deg1} and~\eqref{str:deg2}, we do the following. For any subset $A\subseteq V(G')$ let  $d_{A}$ denote the average degree in $G'$ of the vertices in $A$.  By~\eqref{str:edges}, we have  $d_{I}\le|V_1|\le (1+\eta)\frac k2$. By condition~(i) and since $\deg_{G'}(x)\ge \deg_G(x)-M_0$ for every $x\in V(G')$, we have
	\begin{eqnarray*}(1-4\sqrt\eta)kn\ \le \ 2e(G')&=&
		|I|d_I+|V_1|d_{V_1}+|V_2|d_{V_2}\\
		&\le&\textstyle(1+\eta)\frac k2 ( |I|+d_{V_1}+d_{V_2})\\
		&\le&\textstyle(1+\eta)\frac k2(n-(|V_1|+|V_2|) +d_{V_1}+d_{V_2})\\
		&\le&\textstyle(1+\eta)\frac k2(n-(1-3\sqrt\eta)k +d_{V_1}+d_{V_2}),
		\end{eqnarray*}
and therefore,
	\begin{equation}\label{eq:degV1V2}
	d_{V_1}+d_{V_2}\ge (1-8\sqrt\eta)n +(1-3\sqrt\eta)k. 
	\end{equation}
Because of~\eqref{str:edges}, we have $d_{V_2}\le |V_1|+|V_2|\le  (1+\eta)k$. Thus~\eqref{eq:degV1V2} implies that $d_{V_1}\ge (1-12\sqrt\eta)n$. Since $d_{V_1}\le n$ and  since $n\le \delta^{-1}k$, inequality~\eqref{eq:degV1V2} also implies that $d_{V_2}\ge (1-\sqrt[4]\eta)k$. Apply Lemma~\ref{lem:con2} to $f_1(v)=\deg_{G'}(v)$ for $v\in V_1$, with parameters $t_1=(1-12\sqrt\eta)n$, and $\eps_1=16\sqrt\eta$,  and to $f_2(v)=\deg_{G'}(v)$ for $v\in V_2$,  with  $t_2=(1-\sqrt[4]\eta)k$ and $\eps_2=4\sqrt[4]\eta$,  to obtain~\eqref{str:deg1} and~\eqref{str:deg2}. \end{proof}

The next lemma finishes the analysis of the non-bipartite case.

\begin{lemma}\label{prop:robust2}
For all $\Delta\ge 2, M_0\in\mathbb N$, $\delta, \eps, \eta\in (0,1)$ with $\eps\ll \eta\le\frac{\delta^{8}}{10^{80}}$, there is $k_0\in\mathbb{N}$ such that for all $k,n \ge k_0$ with $\delta^{-1}k\ge n\ge (1+\delta)k$ the following holds. Let $G$ be an $n$-vertex graph that admits an $(\varepsilon,\eta)$-regular partition into at most $M_0$ parts and assume the corresponding reduced graph is connected and non-bipartite.
If
	\begin{enumerate}[(i)]
		\item  $d(G)\ge(1-3\sqrt\eta)k$; and
		\item $\delta(G)\ge(1-3\sqrt\eta)\tfrac{k}{2}$,
	\end{enumerate}
	then $G$ contains every tree $T\in\mathcal T(k,\Delta)$.
\end{lemma}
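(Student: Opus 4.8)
The plan is to use Lemma~\ref{lem:str_nonbip} to extract the structural decomposition $V(G')=I\cup V_1\cup V_2$ and then embed an arbitrary tree $T\in\mathcal T(k,\Delta)$ into $G'$ directly, by a greedy argument that mimics the proof of Proposition~\ref{prop:small} but exploits the bipartite-like structure between $V_1\cup I$ and $V_2$. First I would apply Lemma~\ref{lem:str_nonbip} with the same $\Delta, M_0, \delta, \eps, \eta$ (after shrinking $\eta$ if necessary so that both lemmas' hypotheses on $\eta$ hold; recall $\eta\le\delta^8/10^{80}$ here is the more restrictive one), obtaining $G'\subseteq G$ with $|G'|\ge n-M_0$, a partition $V(G')=I\cup V_1\cup V_2$ satisfying (a)--(d): each $|V_i|=(1\pm 3\sqrt\eta)\tfrac k2$, $I$ is independent with no edges to $V_2$, almost all of $V_1$ has degree $\ge(1-5\sqrt[4]\eta)n$ in $G'$, and almost all of $V_2$ has degree $\ge(1-3\sqrt[8]\eta)k$ in $G'$. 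Let $V_1^{\mathrm{bad}}\subseteq V_1$ and $V_2^{\mathrm{bad}}\subseteq V_2$ denote the exceptional (low-degree) vertices; these have size $\le 4\sqrt[4]\eta|V_1|$ and $\le 2\sqrt[8]\eta|V_2|$ respectively, i.e.\ at most $O(\sqrt[8]\eta\,k)$ vertices in total.

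Next I would set up the tree side. Writing $C\cup D$ for the natural bipartition of $T$ with $|C|\ge|D|$, Fact~\ref{thisisafact} gives $\min\{|C|,|D|\}\ge k/\Delta$; and since $\Delta\ge 2$ we have $|C|\le(1-\tfrac1\Delta)k\le(1-\tfrac1\Delta)k$, in particular $|C|,|D|$ are each at most $(1-\tfrac1\Delta)k$, comfortably below $|V_2|$-type thresholds once we observe $|V_1|+|V_2|+|I|=|G'|\ge (1+\delta)k - M_0$. The idea is to embed $T$ so that $C$ maps into $V_1\cup I$ and $D$ maps into $V_2$; since $I$ has no neighbours in $V_2$ this is consistent with $T$ being bipartite as long as we are careful that vertices of $C$ sent into $I$ only have children (which lie in $D$) — but that fails in general, so instead I would send $C$ entirely into $V_1$ and use $I$ only as a reservoir of "spare" capacity by merging $I$ into $V_1$'s role, noting that the relevant quantity is that vertices of $V_1$ have huge degree ($\ge(1-5\sqrt[4]\eta)n\ge|V_2|+|I|$ essentially), so from any vertex of $V_1$ we can reach almost all of $V_2$. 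Concretely: root $T$ at a vertex of $D$, embed level by level in the natural order, always mapping $D$-vertices to $V_2\setminus V_2^{\mathrm{bad}}$ and $C$-vertices to $V_1\setminus V_1^{\mathrm{bad}}$; a good vertex of $V_2$ has $\ge(1-3\sqrt[8]\eta)k\ge |V_1|+O(\sqrt[8]\eta k)$ neighbours, which after discarding the $\le M_0 + |C|\le k$ already-used or bad vertices still leaves room because $|C|\le(1-\tfrac1\Delta)k$ and $\sqrt[8]\eta\ll 1/\Delta$; similarly a good vertex of $V_1$ sees all but $\le 5\sqrt[4]\eta n$ of $G'$, hence all but $O(\sqrt[4]\eta k)$ of $V_2$. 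The arithmetic that makes this go through is exactly $|D|+ (\text{exceptional vertices})< |V_2|$ and $|C| + (\text{exceptional vertices}) < |V_1|$, which follows from (a) together with $|C|,|D|\le(1-\tfrac1\Delta)k$ and $\eta\le\delta^8/10^{80}\ll\Delta^{-8}$ — here we need $\delta$ small relative to $1/\Delta$, or rather we should note the statement quantifies $\delta$ after $\Delta$, so we may assume $\delta\le 1/\Delta$ (or handle $\delta>1/\Delta$ by the same argument with even more slack).

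The main obstacle I anticipate is handling $I$ correctly: a purely greedy level-wise embedding into $V_1\cup V_2$ alone may run out of room in $V_1$ if $|C|$ is close to its upper bound $(1-\tfrac1\Delta)k$ while $|V_1|$ is near its lower bound $(1-3\sqrt\eta)\tfrac k2$ — indeed $(1-\tfrac1\Delta)k$ can exceed $\tfrac k2$ when $\Delta\ge 3$, so $C$ simply does not fit in $V_1$. This is the real issue and it is why $I$ must be used as additional room for $C$-vertices. The fix is to observe that $|V_1\cup I| = |G'| - |V_2| \ge (1+\delta)k - M_0 - (1+3\sqrt\eta)\tfrac k2 \ge (\tfrac12+\delta - o(1))k$, which still need not exceed $(1-\tfrac1\Delta)k$ for large $\Delta$ and small $\delta$. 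So a cleaner route is needed: instead I would \emph{not} insist $C\to V_1\cup I$ and $D\to V_2$, but rather use Lemma~\ref{lem:matching2}'s matching structure more carefully, embedding $T$ across the matching edges of $\mathscr M$ (which is how Lemma~\ref{lem:nonbipcon} is proved in \cite{BPS1}); the point of Lemma~\ref{lem:str_nonbip} extracting the structure is precisely so that the \emph{remaining} case — where the non-bipartite component fails to have a large matching, contradicting Lemma~\ref{lem:nonbipcon} unless $G'$ looks like an almost-complete graph of order $\approx k$ glued structure — reduces to Proposition~\ref{prop:small}. Thus the honest plan is: apply Lemma~\ref{lem:str_nonbip}; observe that conditions (a)--(d) force $G'$ (and hence a slightly smaller induced subgraph obtained by deleting the $O(\sqrt[8]\eta k)$ bad vertices and, if needed, absorbing $I$ into $V_1$) to have $|G'_{\mathrm{good}}|\le (1+O(\sqrt[8]\eta))k \le (1+10^{-11})k$ — wait, that requires $n$ itself close to $k$, contradicting $n\ge(1+\delta)k$; so in fact the non-bipartite connected large case cannot occur at all under $n\ge(1+\delta)k$ once Lemma~\ref{lem:str_nonbip} applies, \emph{unless} $I$ is large — and if $I$ is large we embed $T$ using $I$ as one colour class. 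I would therefore split on $|I|$: if $|I|\ge(1+4\sqrt\eta)\tfrac k2$ then, since $N_{G'}(I)\subseteq V_1$ and good vertices of $V_1$ have degree $\ge(1-5\sqrt[4]\eta)n\ge|I|+|V_2|$, the bipartite graph between $V_1$ and $I\cup V_2$ has the degree profile required by Lemma~\ref{lem:greedy3} (with the small colour class of $T$ going to $V_1$ and after discarding bad vertices), so $T$ embeds; if $|I|<(1+4\sqrt\eta)\tfrac k2$, then $|V_1|+|V_2|+|I| = |G'| \ge (1+\delta)k - M_0$ with $|V_1|,|V_2|\le(1+3\sqrt\eta)\tfrac k2$ forces $|I|\ge (\delta-o(1))k>4\sqrt\eta\cdot\tfrac k2$ once $\delta\gg\sqrt\eta$ — contradiction, so this sub-case is vacuous. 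Hence the only surviving case is $|I|$ large, dispatched by Lemma~\ref{lem:greedy3} as above. The delicate point to get right in the write-up is the bookkeeping of the exceptional sets $V_1^{\mathrm{bad}},V_2^{\mathrm{bad}}$ when applying Lemma~\ref{lem:greedy3}: we need at most $h$ low-degree vertices on each side, which holds with $h=O(\sqrt[4]\eta k)$, and correspondingly the colour classes of $T$ must have size at most $k - h$, which is fine since they are $\le(1-\tfrac1\Delta)k$ and $\sqrt[4]\eta\ll 1/\Delta$.
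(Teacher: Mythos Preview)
Your proposal has two genuine gaps.

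\textbf{The ``vacuous'' sub-case is not vacuous.} You split on whether $|I|\ge(1+4\sqrt\eta)\tfrac k2$. In the complementary case you compute $|I|\ge(\delta-o(1))k$ and declare a contradiction. But there is none: for small $\delta$ (say $\delta=0.01$) one has $(\delta-o(1))k\ll \tfrac k2$, so $|I|\approx\delta k$ is perfectly consistent with $|I|<(1+4\sqrt\eta)\tfrac k2$. In fact this ``medium $I$'' regime is exactly the hard case the lemma has to cover, since $n$ may be as small as $(1+\delta)k$ while $|V_1|+|V_2|\approx k$.

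\textbf{The bipartite embedding via Lemma~\ref{lem:greedy3} does not go through.} Sending the small class $D$ to $V_1$ and the large class $C$ to $I\cup V_2$ requires (condition~(iii)) that most vertices of $I\cup V_2$ have degree at least $|D|+h$ into $V_1$. Good vertices of $V_2$ have degree to $V_1$ only about $(1-3\sqrt[8]\eta)k-|V_2|\approx(\tfrac12-3\sqrt[8]\eta)k$, which is smaller than $|D|+h$ whenever $|D|$ is close to $\tfrac{k+1}2$. Condition~(ii) has the analogous problem: good vertices of $V_1$ have degree to $I\cup V_2$ roughly $(\tfrac12+\delta)k$ (when $n\approx(1+\delta)k$), while the large class $C$ may have size $(1-\tfrac1\Delta)k$, which exceeds $(\tfrac12+\delta)k$ for any $\Delta\ge 3$ once $\delta<\tfrac12-\tfrac1\Delta$. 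Since $\Delta$ and $\delta$ are quantified independently, you cannot assume $\delta$ is large relative to $1/\Delta$.

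The paper's proof does not try to respect the bipartition of $T$. Instead it observes that the good vertices $U_1\cup U_2\subseteq V_1\cup V_2$ induce an almost-complete graph $H$ on $\approx k$ vertices with $\delta(H)\ge(1-6\sqrt[8]\eta)k$. This is just barely not enough room for a $k$-edge tree; the fix is to gain $\tfrac{\delta}{100}k$ extra space by routing part of the tree through $I$. Concretely, one applies Lemma~\ref{cut_off} to extract a subtree $(T^*,t^*)$ with $\tfrac k4\le|T^*|\le\tfrac k2$, then uses Lemma~\ref{bare} on $T^*$: if $T^*$ has many bare paths of length $3$, embed each such path as $U_1\to I\to U_1$ (possible by~\eqref{str:deg1} and since $N_{G'}(I)\subseteq V_1$); if $T^*$ has many leaves, send their parents to $U_1$ and the leaves to $I$. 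Either way one occupies $\ge\tfrac{\delta}{100}k$ vertices of $I$ during the embedding of $T^*$, so at most $|T^*|-\tfrac{\delta}{100}k$ vertices of $H$ are used. The remaining graph $H'$ then has $\delta(H')\ge(1+\tfrac{\delta}{200})k-|T^*|\ge |T-T^*|$, and $T-T^*$ embeds greedily.
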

\begin{proof} Let $k_0$ be the output of Lemma~\ref{lem:str_nonbip} and let $G$ and  $T\in\mathcal T(k,\Delta)$ be given. If we cannot embed $T$ into $G$, then by Lemma~\ref{lem:str_nonbip} we find a subgraph $G'\subseteq G$ and a partition $V(G')=I\cup V_1\cup V_2$ fulfilling the  properties of Lemma~\ref{lem:str_nonbip}.

 Let $U_1\subseteq V_1$ be the set of all vertices $x\in V_1$ with $\deg_{G'}(x)\ge (1-5\sqrt[4]\eta)n$, and let $U_2\subseteq V_2$ be the set of all vertices $x\in V_2$ with $\deg_{G'}(x)\ge (1-3\sqrt[8]\eta)k$.  In particular, because of Lemma~\ref{lem:str_nonbip}~\eqref{str:Vismall}, we have that
	\begin{equation}\label{deg:I}\textstyle\text{each vertex $x\in U_1$ has at least $(1-\sqrt\eta)|I|$ neighbours in $I$}.\end{equation}
	Also, note that $|U_1|\ge (1-4\sqrt[4]\eta)|V_1|\ge |V_1|-\sqrt[8]\eta k$ and $|U_2|\ge (1-2\sqrt[8]\eta)|V_2|$,
	by Lemma~\ref{lem:str_nonbip}~\eqref{str:Vismall}, ~\eqref{str:deg1} and~\eqref{str:deg2}.
	Let $H$ be the graph induced by $U_1$ and $U_2$. Note that because of Lemma~\ref{lem:str_nonbip}~\eqref{str:edges} and~\eqref{str:deg2}, 
	we know that the vertices from $U_2$ have minimum degree at least $(1-6\sqrt[8]\eta)k$ in $H$, and 
	because of Lemma~\ref{lem:str_nonbip}~\eqref{str:Vismall} and~\eqref{str:deg2}, 
the vertices from $U_1$ have minimum degree at least $(1-9\sqrt[4]{\eta})n-2\sqrt[8]{\eta}k-|I|\ge (1-3\sqrt[8]\eta)k$ in $H$.
Hence, 
\begin{equation}\label{mindegH__}
\delta(H)\ge (1-6\sqrt[8]\eta)k.
\end{equation}
 So, by Lemma~\ref{lem:greedy} every tree with at most $(1-6\sqrt[8]\eta)k$ edges can be embedded greedily into~$H$. Let $(T^*,t^*)$ be the subtree given by Lemma~\ref{cut_off} for $\gamma=\frac 12$, so that $\frac k4\le|T^\star|\le\frac k2$ and every component of $T-T^*$ is adjacent to $t^*$. We apply Lemma~\ref{bare} to $T^*$, with $\ell=3$, which splits the proofs into two cases.
		\begin{flushleft}\textbf{Case 1:} $T^\star$ has a set $\mathcal B$ of $|T^\star|/12$ vertex disjoint bare paths, each of length $3$.\end{flushleft}
		Note that each vertex from $H$ has at least $\Delta$ neighbours in $U_1$, because of Lemma~\ref{lem:str_nonbip}~\eqref{str:Vismall} and our bound from~\eqref{mindegH__}, which will be tacitly used in what follows.
		
		We embed $t^\star$ into any vertex from $H$. The rest of $T^\star$ will be embedded in DFS order into~$H$. We will use the following strategy until we have occupied  $\lceil\frac\delta{100}k\rceil$ vertices from~$I$. 
		For each path $P\in \mathcal B$, we proceed as follows. We embed the first vertex $v_1$ of the path $P$  into a vertex $u_1\in U_1$, and then find another vertex $u_3\in U_1$ which has a common neighbour $u_2$  with $u_1$ in $I$. Note that the vertex $u_3$ exists because of~\eqref{deg:I}. We then embed the middle vertex $v_2$ of $P$  into $u_2\in I$, and the end point $v_3$ into $u_3\in U_1$. The remaining vertices of $T^\star$ are embedded greedily into $H$.
	\begin{flushleft}\textbf{Case 2:} $T^\star$ has $|T^\star|/12$ leaves.\end{flushleft}
	In this case, the embedding of $T^\star$ follows a similar strategy. We embed $t^\star$ into any vertex from $H$ and the rest will be embedded in DFS order. 
	We take care to embed all parents of leaves into $U_1$ and all leaves into $I$, until we have used  $\lceil\tfrac\delta{100}k\rceil$ vertices from~$I$. The remaining vertices of $T^\star$ are embedded greedily into $H$.\\
	
	Now, let $m$ be the number of vertices we have embedded so far into $H$, and let $H'\subseteq H$ contain all unused vertices of $H$. By our embedding strategy, we have that $m \le |T^\star|-\tfrac\delta{100}k$.
	Therefore, and by~\eqref{mindegH__}, 	\[\delta(H')\ge (1-6\sqrt[8]\eta)k-m\ge (1-6\sqrt[8]\eta)k+\tfrac\delta{100}k-|T^\star|\ge(1+\tfrac\delta{200})k-|T^\star|, \] and so we can finish the embedding of $T$ by embedding $T-T^*$ greedily into $H'$. 
\end{proof}

\subsection{Proof of Theorem~\ref{thm:main}}\label{sec:proof_main}

In this subsection we prove Theorem~\ref{thm:main} with the help of the results from the previous  subsections.
In order to do this, we need a result that follows from Theorem~1.9 in~\cite{BPS1} (the original Theorem~1.9 allows for a weaker bound on the maximum degree of $T$).   

\begin{lemma}$\!\!${\rm\bf\cite{BPS1}}\label{ES:app}
For all $\Delta\ge2$ and $\delta,\theta\in(0,1)$ there is $n_0\in\mathbb N$ such that for all $n\ge n_0$ and $k\in\mathbb N$ with $n>k\ge \delta n$ the following holds. Let $G$ be an $n$-vertex graph with $d(G)\ge (1+\theta)k$, then $G$ contains every tree $T\in\mathcal T(k,\Delta)$. 
	\end{lemma}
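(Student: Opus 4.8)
\textbf{Proof plan for Lemma~\ref{ES:app}.}
The plan is to derive this statement directly from Theorem~1.9 of~\cite{BPS1}, treating the present lemma as a strengthened but essentially equivalent reformulation. Theorem~1.9 of~\cite{BPS1} gives an approximate Erd\H os--S\'os statement for trees of maximum degree bounded by $k^{1/67}$ (or whatever the precise exponent in~\cite{BPS1} is) in dense host graphs: for suitable $n\ge n_0$ and $k\ge \delta n$, any $n$-vertex graph with $d(G)\ge(1+\theta)k$ contains every tree $T$ with $k$ edges and $\Delta(T)\le k^{1/67}$. Since for fixed $\Delta$ and all sufficiently large $k$ we have $\Delta\le k^{1/67}$, the class $\mathcal T(k,\Delta)$ is eventually contained in the class covered by Theorem~1.9. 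So the plan is: given $\Delta,\delta,\theta$, invoke Theorem~1.9 of~\cite{BPS1} with the same $\delta$ and $\theta$ to obtain some threshold $n_1$; then set $n_0:=\max\{n_1,\, \Delta^{67}/\delta\}$ (or the analogous quantity matching the exponent in~\cite{BPS1}), so that for $n\ge n_0$ and $k\ge\delta n$ we have $k\ge\delta n_0\ge \Delta^{67}$, hence $\Delta\le k^{1/67}$ and $\mathcal T(k,\Delta)\subseteq \mathcal T(k,k^{1/67})$. Then every $T\in\mathcal T(k,\Delta)$ is embeddable in $G$ by Theorem~1.9.

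I would carry out the steps in the following order. First, quote the precise statement of Theorem~1.9 of~\cite{BPS1}, noting the exact maximum-degree bound it allows and the exact form of the average-degree hypothesis. Second, observe the inclusion of tree classes once $k$ is large enough relative to $\Delta$: for $k\ge \Delta^{c}$ (with $c$ the reciprocal of the exponent in~\cite{BPS1}) one has $\Delta\le k^{1/c}$ and hence $\mathcal T(k,\Delta)\subseteq\mathcal T(k,k^{1/c})$. Third, choose $n_0$ large enough that both the hypothesis $n\ge n_1$ of Theorem~1.9 and the inequality $\delta n_0\ge \Delta^{c}$ hold; the latter guarantees $k\ge\delta n\ge\delta n_0\ge\Delta^c$ whenever $n\ge n_0$ and $k\ge\delta n$. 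Fourth, conclude: $G$ satisfies the hypotheses of Theorem~1.9 with parameters $\delta,\theta$, so it contains every $T\in\mathcal T(k,k^{1/c})\supseteq\mathcal T(k,\Delta)$.

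The only mild subtlety — and the one point requiring a sentence of care rather than a routine calculation — is matching the exact parameter conventions of~\cite{BPS1}: whether Theorem~1.9 is stated with $d(G)\ge(1+\theta)k$ or with a strict inequality or with an additive rather than multiplicative slack, and whether its size window is $n>k\ge\delta n$ or something marginally different. These are cosmetic discrepancies, absorbed by shrinking $\theta$ slightly or enlarging $n_0$, and they are exactly the sort of bookkeeping the remark ``the original Theorem~1.9 allows for a weaker bound on the maximum degree of $T$'' is flagging. There is no genuine mathematical obstacle: the whole content is that a constant degree bound is a (very strong) special case of the polynomial degree bound already handled in~\cite{BPS1}, so the lemma is a restriction of an existing theorem to a smaller tree class, stated in this form purely for convenient application in the proof of Theorem~\ref{thm:main}.
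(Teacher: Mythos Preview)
Your proposal is correct and matches the paper's treatment exactly: the paper does not give a proof of this lemma at all but simply states that it ``follows from Theorem~1.9 in~\cite{BPS1} (the original Theorem~1.9 allows for a weaker bound on the maximum degree of $T$)'', which is precisely the observation you make --- a constant bound $\Delta$ is eventually dominated by the polynomial bound $k^{1/67}$ permitted in~\cite{BPS1}.
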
 
	

Now we are ready for the proof of Theorem~\ref{thm:main}.
\begin{proof}[Proof of Theorem~\ref{thm:main}]
	Given $\Delta$ and $\delta$, we set $\nu=\min\{\frac{\delta^2}{2^{10}},\frac{1} {10^{11}},\frac{1}{25\Delta^2}\}$ and we fix  parameters $\eps$, $\eta$, $\theta$ such that
	\[0<\varepsilon\ll\eta\ll\theta\le\frac{\nu^{8}}{10^{80}}.\]
Let $k_0$ be the maximum of $\frac{3}{\eps}$ and the outputs of Lemma~\ref{reg:deg}, Lemma~\ref{prop:robust_bip}, Lemma~\ref{prop:robust2} and Lemma~\ref{ES:app} (with $\nu$ playing the role of $\delta$, and $m_0=\lceil\frac 1\eps\rceil$). Set $n_0=\lceil\delta^{-1}k_0\rceil$.

By Proposition~\ref{prop:small} we may assume that $|G|\ge (1+\nu)k$ and if $G$ is $\nu$-bipartite, Proposition~\ref{prop:small:bip} allows us to  assume that the larger bipartition class of $G$ has  at least $(1+\nu)k$ vertices. Now the regularity lemma (Lemma~\ref{reg:deg}) provides us with a subgraph $G'$ with $|G'|\ge (1-\eps)n$  that has an $(\eps,\eta)$-regular partition. Let $\mathscr R$ be the corresponding reduced graph and let $\mathscr{U}_1, \ldots , \mathscr U_\ell$ be the connected components of $\mathscr R$. Then, since we may assume that $\delta(G)\ge\frac k2$ (see the footnote in the Introduction), we have
\[
\text{$\deg_{G'}(x)\ge (1-2\sqrt\eta)\deg_{G}(x)\ge (1-2\sqrt\eta)\frac k2$ \ for all  $x\in V(G')$,}
\]
and therefore
\[ \frac{\ell k}4\le(1-2\sqrt\eta)\tfrac{k}{2}\ell\le \sum_{i\in[\ell]} \textstyle{|\bigcup\mathscr U_i|}\le n\le \delta^{-1}k,\]
implying that
\begin{equation}\label{ellanddelta}
\ell\le 4\delta^{-1}.
\end{equation}
 We set $U'_i=\bigcup \mathscr U_i$ for each $i\in[\ell]$.
	
\begin{claim}\label{prop:stability}Suppose that exists $T\in\mathcal T(k,\Delta)$ which cannot be embedded into $G'$, then

	\begin{enumerate}[(i)]
		\item $d(G'[U'_i])= (1\pm\frac\nu2)k$ and $\delta(G'[U'_i])\ge (1-\frac\nu2)\frac k2$ for all $i\in [\ell]$; and\label{iiiiiii}
		\item\label{stb:sizes} for each $i\in[\ell]$ either 
		\begin{enumerate}[(a)]
			\item $G'[U'_i]$ is non-bipartite and $|U_i|=(1\pm\frac\nu2)k$, or
			\item $G'[U'_i]$ is bipartite with $V(U_i)=A_i\cup B_i$ such that $|A_i|,|B_i|=(1\pm\frac\nu2)k$.\end{enumerate}
	\end{enumerate}
\end{claim}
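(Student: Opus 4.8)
The plan is to prove Claim~\ref{prop:stability} by a global double-counting argument on the components $U'_i$, using the fact that none of them can be ``wasteful''. First I would record the basic degree and size bounds that are available for free: by the regularisation step we have $\delta(G'[U'_i])\ge(1-2\sqrt\eta)\tfrac k2$ and $\delta(G')\ge(1-2\sqrt\eta)\tfrac k2$ for every $i$, and since $d(G)>k-1$, a short computation together with $|G'|\ge(1-\eps)n$ and $\deg_{G'}(x)\ge\deg_G(x)-(\eta+\eps)n$ shows that the total number of edges of $G'$ is still essentially $\ge(1-3\sqrt\eta)\tfrac{kn}{2}$; in particular $d(G')\ge(1-3\sqrt\eta)k$. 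Splitting this edge count across the $\ell\le 4\delta^{-1}$ components, at least one component has average degree $\ge(1-3\sqrt\eta)k$, but to get the conclusion for \emph{all} components one must argue that a component which is too small, or too sparse, or ``too bipartite'', can absorb $T$, contradicting the non-embeddability of $T$ into $G'$.

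\smallskip

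The heart of the argument is the following dichotomy applied to a fixed component $\mathscr U_i$ and $G'_i:=G'[U'_i]$. If $\mathscr U_i$ is non-bipartite, I would split further: if $\mathscr U_i$ contains a matching with at least $(1+100\sqrt\eps)\tfrac k2\tfrac{|\mathscr R|}{n}$ edges, then Lemma~\ref{lem:nonbipcon} embeds $T$ into $G'$ -- contradiction; otherwise Lemma~\ref{prop:robust2} (whose hypotheses (i)--(ii) we have just verified, since a sufficiently large matching is exactly what would be produced if the degrees inside $G'_i$ were too large) forces $\delta(G'_i)$ and $d(G'_i)$ to be within the stated $(1\pm\tfrac\nu2)$ window and $|U_i|=(1\pm\tfrac\nu2)k$; this is case (ii)(a). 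More precisely, Lemma~\ref{prop:robust2} applied to $G'_i$ with parameters $\nu,M_0,\eps,\eta$ says that if $|U'_i|\ge(1+\nu)k$ and $d(G'_i)\ge(1-3\sqrt\eta)k$ and $\delta(G'_i)\ge(1-3\sqrt\eta)\tfrac k2$ then $T\subseteq G'_i\subseteq G'$, contradiction; so either $|U'_i|<(1+\nu)k$, or the average or minimum degree of $G'_i$ drops below those thresholds -- and one checks that the degree-drop alternatives are incompatible with $\delta(G')\ge(1-2\sqrt\eta)\tfrac k2$ and with the global edge count once summed over all components, leaving exactly the claimed bounds. If instead $\mathscr U_i$ is bipartite with parts $\mathscr A_i,\mathscr B_i$, $|\mathscr A_i|\ge|\mathscr B_i|$, I would apply Lemma~\ref{prop:robust_bip} to $G'_i$ with $\delta:=\nu$: if $|\bigcup\mathscr A_i|\ge(1+\nu)k$ and the degree conditions hold, $T$ embeds, contradiction; so the larger side has at most $(1+\nu)k$ vertices, and combined with the lower bounds coming from $\delta(G'_i)\ge(1-2\sqrt\eta)\tfrac k2$ and from the requirement that $e(G'_i)$ cannot be much smaller than $(1-3\sqrt\eta)\tfrac{k|U'_i|}{2}$ (else the other components would have to carry too many edges, producing a component satisfying the embedding hypotheses), we get $|A_i|,|B_i|=(1\pm\tfrac\nu2)k$ and the stated degree bounds -- case (ii)(b).

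\smallskip

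To make the ``else some other component is good'' step precise, I would set up the counting as follows. Write $a_i:=|U'_i|$, so $\sum_i a_i=|G'|\ge(1-\eps)n\ge(1-\eps)k$ and each $a_i\ge\delta(G')\ge(1-2\sqrt\eta)\tfrac k2$, hence $a_i\ge\tfrac k3$ and $\ell\le 3\delta^{-1}$. Write $e_i:=e(G'_i)$; since $\mathscr R$ has no edges between distinct components, $\sum_i e_i=e(G')\ge(1-3\sqrt\eta)\tfrac{kn}{2}\ge(1-3\sqrt\eta)\tfrac{k}{2}\sum_i a_i$. Suppose for contradiction that some component fails the claim, say its parameters lie outside the $(1\pm\tfrac\nu2)$ windows in a way not excluded by the minimum-degree bound; then on that component $e_i$ is noticeably below $(1-3\sqrt\eta)\tfrac{k a_i}{2}$ (this is where one uses that $2e_i\le(\max\text{side})\cdot(\text{other side})$ in the bipartite case, or $2e_i\le a_i^2$ with $a_i$ too small, or that a near-complete non-bipartite component on $(1+\nu)k$ vertices would already contain $T$), so some other component $j$ must have $e_j>(1-3\sqrt\eta)\tfrac{k a_j}{2}+ c_\nu k^2$ for a constant $c_\nu>0$ depending only on $\nu$ and $\delta$; pushing this slightly, $G'_j$ then has average degree $\ge(1+\theta)k$ on a near-complete graph, or a large matching, and Lemma~\ref{ES:app}, Lemma~\ref{lem:nonbipcon}, or Lemma~\ref{emb:forest} (via Lemma~\ref{prop:robust_bip}) embeds $T$ into $G'_j\subseteq G'$, the desired contradiction. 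The degree statements in \eqref{iiiiiii} then follow by applying Lemma~\ref{lem:con2} (a.k.a.\ the convexity/averaging lemma used repeatedly above) to $\deg_{G'_i}(\cdot)$ on $U'_i$ with the average degree pinned at $(1\pm\tfrac\nu2)k$ and the minimum degree at $(1-2\sqrt\eta)\tfrac k2$.

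\smallskip

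I expect the main obstacle to be the bookkeeping in the ``some other component is good'' step: one must choose the constants $\nu\ll\eta\ll\theta$ so that the edge deficit incurred by a single bad component is genuinely a positive fraction of $k^2$ (not merely $o(k^2)$), so that it translates into an $(1+\theta)k$ average-degree surplus — or a correspondingly large matching — on a single other component rather than being diluted across the $\le 3\delta^{-1}$ components. The bipartite sub-case is the delicate one, because a balanced bipartite component with both sides exactly $(1-\tfrac\nu2)k$ is \emph{not} bad but sits right at the boundary; the argument must be arranged so that the only components landing in the windows of \eqref{stb:sizes} are those the hypotheses of Lemmas~\ref{prop:robust_bip} and~\ref{prop:robust2} fail to cover, and one has to be careful that ``$G'_i$ bipartite'' here means the reduced graph $\mathscr U_i$ is bipartite, which by Fact~\ref{fact:3} transfers to $G'_i$ being close to bipartite with the right part sizes. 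Everything else is a routine assembly of the already-proved lemmas.
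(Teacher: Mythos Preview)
Your approach is essentially correct and arrives at the same conclusion as the paper, but the organisation is inverted in a way that makes the argument look more delicate than it is. The paper proves~\eqref{iiiiiii} \emph{first}, and only then derives~\eqref{stb:sizes}; you try to do both at once, which is why your second paragraph has to keep appealing to a ``global edge count'' that is only made precise in your third paragraph.

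Concretely, the paper's argument for~\eqref{iiiiiii} is a two-line affair. Since $T$ does not embed into $G'$, Lemma~\ref{ES:app} applied to each $G'[U'_i]$ separately gives the \emph{upper} bound $d(G'[U'_i])<(1+\theta)k$ for \emph{every} $i$. Now the weighted average $\sum_i\tfrac{|U'_i|}{n}\,d(G'[U'_i])=d(G')\ge(1-3\sqrt\eta)k$, and the concentration Lemma~\ref{lem:con} (with $f(i)=d(G'[U'_i])$, $\mu(i)=|U'_i|/n$, and $\sqrt\theta$ in the role of~$\eps$) immediately yields that the set of indices $i$ with $d(G'[U'_i])<(1-2\sqrt\theta)(1-3\sqrt\eta)k$ has $\mu$-measure at most $2\sqrt\theta$; since each component has $\mu$-measure at least $\tfrac{k}{3n}\ge\tfrac\delta3$, this set is empty. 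That is the whole proof of the average-degree bound in~\eqref{iiiiiii}; the minimum-degree bound is free from $\delta(G')\ge(1-2\sqrt\eta)\tfrac k2$.

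Your ``deficit/surplus'' argument in the third paragraph is a hand-rolled version of exactly this: a bad component forces, by averaging, some other component to have $d(G'_j)\ge(1+\theta)k$, whereupon Lemma~\ref{ES:app} embeds~$T$. This works, but note that you only need Lemma~\ref{ES:app} here---the invocations of Lemmas~\ref{lem:nonbipcon} and~\ref{emb:forest} in that step are superfluous. Once~\eqref{iiiiiii} is established, your second paragraph becomes straightforward: the hypotheses of Lemmas~\ref{prop:robust_bip} and~\ref{prop:robust2} are now \emph{known} to hold for every component, so non-embeddability forces the size bounds in~\eqref{stb:sizes} directly by contraposition, with no circularity and no further edge-counting.
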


In order to see this claim, observe that  since $T$ cannot be embedded into $G'$, Lemma~\ref{ES:app} implies that $d(G'[U'_i])<(1+\theta)k$ for each $i\in[\ell]$. Note that 
	\[\sum_{i=1}^\ell\frac{|U'_i|}{n}d(G'[U'_i])=d(G')\ge (1-3\sqrt\eta)k.\]
	Set $t=(1-3\sqrt\eta)k$. Applying Lemma~\ref{lem:con} with $N=\ell$, $\mu(i)=|U'_i|/n$ and $f(i)=d(G'[U'_i])$, and with $\sqrt\theta$ in the role of $\eps$, we see that the set $I=\{i\in[\ell]:d(G'[U'_i])<(1-2\sqrt\theta)t\}$ satisfies \[\frac{t|I|}{2n}\le\mu(I)\le 2\sqrt\theta\]
	(where for the first inequality we use that $|U_i|>\frac t2$ for each $i$). Thus, $|I|\le 8\delta^{-1}\sqrt\theta<1$. In other words, $I=\emptyset$, and therefore, for each $i\in[\ell]$ we have
    \begin{equation}\label{stb:avg}
    d(G'[U'_i])\ge (1-2\sqrt\theta)(1-3\sqrt\eta)k\ge (1-3\sqrt\theta)k.
    \end{equation} 
    This, together with the minimum degree in $G'$, proves~\eqref{iiiiiii}. In order to see~\eqref{stb:sizes}, we use~\eqref{stb:avg} and  Lemmas~\ref{prop:robust_bip} and~\ref{prop:robust2}. This proves Claim~\ref{prop:stability}.\\

 Now we distribute the vertices from $G-G'$ into the sets~$U'_i$. We successively assign each leftover vertex to the set $U'_i$  it sends most edges to (or to any one of these sets, if there is more than one). Then for each $i\in[\ell]$ and all $x\in U_i$ we have
 \begin{equation*}
 \deg(x,U'_i)\ge \frac{k}{2\ell}\ge \frac\delta8k,
 \end{equation*}
where we used~\eqref{ellanddelta} for the second inequality. Since we add at most $\eps n\ll\nu k$ vertices to each set, we end up with a partition  $V(G)=U_1\cup\ldots \cup U_\ell$ satisfying, for each $i\in[\ell]$,
\begin{enumerate}[(I)]
		\item $d(G[U_i])= (1\pm\nu)k$ and $\delta(G[U_i])\ge \frac\delta 8k$;  \label{avg:degree}
		\item  $\deg(x,U_i)< (1-\nu)\frac k2$ for less than $\nu k$ vertices $x\in U_i$; and
		\item either  $G[U_i]$ is non-bipartite and $|U_i|= (1\pm\nu)k$, or
			 $G[U_i]$ is $\nu$-bipartite with $U_i=A_i\cup B_i$ such that  $|A_i|,|B_i|=(1\pm\nu)k$.\label{uuuuuiiiiii}
	\end{enumerate}
For each $i\in[\ell]$, we use Lemma~\ref{lem:con2} for $f(x)=\deg(x,U_i)$, with $2\nu$ playing the role of $\eps$, to deduce that 
\begin{equation}\label{ES:degree}\deg(x,U_i)\ge (1-\sqrt{2\nu})k\text{ for at least $(1-\sqrt{ 2\nu})|U_i|$ vertices from $U_i$.}
\end{equation}

Now we embed $T$ using this structural information of $G$. We apply Lemma~\ref{cut_off} to $T$, with $\gamma=\frac 12$, to obtain a subtree $(T,t^*)$ with $\frac k4\le |T^*|\le\frac k2$  such that every component of $T-T^*$ is adjacent to $t^*$. Moreover, since $\Delta(T)\le \Delta$ there is a component $T'$ of $T-T^*$ with $\frac{k}{2\Delta}\le |T'|\le \frac{3k}4$. 

Note that if there are no edges between different sets $U_i$, then an averaging argument shows that there is  $i^\star\in[\ell]$ such that $d(G[U_{i^\star}])\ge d(G)>k-1$. But then, because of~(III) and because of Theorem~\ref{smalltheorem}, we are done. Thus, we may assume that there is an edge $u_iu_j$ with $u_i\in U_i$ and $u_j\in U_j$.
We map $t^*$ into  $u_i$ and map the root of $T'$ into  $u_j$. Note that by~\eqref{avg:degree}, we have
\begin{equation}\label{ES:mindegree}\delta(G[U_i])\ge \frac\delta8k\ge 4\sqrt\nu k\ge \sqrt{2\nu}|U_i|+\Delta\end{equation}
and that~\eqref{uuuuuiiiiii}, together with our choice of $\nu$ ensures that $\sqrt{2\nu}|U_i|\le \frac{k}{2\Delta}$. So,
 we may finish the proof by using  Lemma~\ref{lem:greedy} and Lemma~\ref{lem:greedy3} to embed $T-T'$ into $U_i$ and $T'$ into $U_j$, which we can do because of~\eqref{ES:degree} and~\eqref{ES:mindegree}.  

\end{proof}

\section{Trees with up to linearly bounded maximum degree}\label{sec:linear}

\subsection{Proof of Theorem~\ref{thm:es_app}}

We will need the
 following lemma, which will be proved in Section~\ref{sec:pow}.

\begin{lemma}\label{prop:connected_linear}
	For all $\delta\in (0,\frac 12)$ there are $k_0\in\NN$ and $\rho\in(0,1)$ such that for all $k\ge k_0$ and every $n$-vertex graph $G$ with $n\ge k\ge \delta n$  the following holds. If $\delta(G)\ge(1+\delta)\frac{k}{2}$ and 
at least $\lceil\delta n\rceil$ vertices of $G$ have degree at least $(1+\delta)k$,
	then $G$ contains every tree $T\in\mathcal T(k,\rho k)$. 
\end{lemma}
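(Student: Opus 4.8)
\medskip

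The plan is to follow the same overall architecture as the bounded-degree arguments from Section~\ref{sec:smaaall}, adapting each ingredient to allow the maximum degree of $T$ to be linear in $k$ (that is, $\Delta(T)\le \rho k$ for a suitably small $\rho=\rho(\delta)$). First I would apply the regularity lemma (Lemma~\ref{reg:deg}) with parameters $\eps\ll\eta\ll\delta$ to obtain a subgraph $G'$ of $G$ with $|G|-|G'|\le \eps n$, degrees essentially preserved, admitting an $(\eps,\eta)$-regular partition with reduced graph $\mathscr R$ on $\ell$ clusters, where $\ell$ is bounded in terms of $\eps$ and $\delta$ (as in~\eqref{ellanddelta}). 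Because $\delta(G)\ge (1+\delta)\tfrac k2$, after regularising we still have $\delta(G')\ge (1+\tfrac\delta2)\tfrac k2$, and because at least $\lceil\delta n\rceil$ vertices of $G$ have degree $\ge (1+\delta)k$, a constant fraction of clusters of $\mathscr R$ have $\mathscr R$-degree at least $(1+\tfrac\delta2)k\cdot\tfrac{|\mathscr R|}{n}$ by Fact~\ref{fact:3}\eqref{fact:3,1}; I will need a version of the tree-embedding engine of~\cite{BPS1} that works for trees with linearly bounded degree (this is exactly the regime in which Lemma~\ref{ES:app}, Theorem~\ref{thm:es_app}, and the results of~\cite{BPS1, rohzon} operate), so I would invoke the linear-degree analogues of Lemmas~\ref{emb:forest}, \ref{lem:nonbipcon} rather than the $\Delta$-bounded ones.

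\medskip

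The case split would be on the connectivity/bipartiteness of $\mathscr R$, mirroring Lemmas~\ref{prop:robust_bip} and~\ref{prop:robust2} but now with the hypothesis that a positive fraction of high-degree vertices survives. Concretely: if some connected component $\mathscr C$ of $\mathscr R$ is non-bipartite and contains a large matching (of more than $(1+100\sqrt\eps)\tfrac k2\tfrac{|\mathscr R|}{n}$ edges), we finish directly by the linear-degree version of Lemma~\ref{lem:nonbipcon}. Otherwise every component is either bipartite or has a small maximum matching; in the latter case the component behaves like an almost-complete graph of order roughly $k$, and in the bipartite case one of the two sides must be large (at least $(1+\tfrac\delta4)\tfrac k2\tfrac{|\mathscr R|}{n}$ clusters) by the density count in the proof of Lemma~\ref{prop:robust_bip}, using $d(G')\ge (1+\tfrac\delta2)\tfrac k2\cdot\tfrac{2e}{kn}$-type estimates — the point is that the surplus $\delta$ in the minimum degree forces one bipartition class to carry more than $k$ vertices. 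Then the linear-degree analogue of Lemma~\ref{emb:forest} embeds $T$ with its larger colour class going to the larger cluster side. Since the hypothesis guarantees $\lceil\delta n\rceil$ vertices of degree $\ge(1+\delta)k$, even if $\mathscr R$ is disconnected a constant fraction of the high-degree vertices lies in one component $\mathscr U_{i^\star}$, and that component inherits both $d(G'[\bigcup\mathscr U_{i^\star}])$ large and a linear number of high-degree vertices, so it is never one of the ``tight'' extremal pieces; this removes the need for the component-gluing step that appears in the proof of Theorem~\ref{thm:main}.

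\medskip

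The routine parts — applying Lemma~\ref{lem:con}/Lemma~\ref{lem:con2} to pass from average to almost-minimum degree conditions, distributing the $\le \eps n$ leftover vertices of $G-G'$ into the cluster sets as in the proof of Theorem~\ref{thm:main}, and choosing the chain of constants $\rho\ll\eps\ll\eta\ll\delta$ — I would carry out exactly as in Section~\ref{sec:proof_main}. The main obstacle I expect is purely bookkeeping at the level of the embedding engine: the lemmas I can cite verbatim from this excerpt (Lemmas~\ref{emb:forest}, \ref{lem:nonbipcon}, \ref{ES:app}) are stated for $\mathcal T(k,\Delta)$ with $\Delta$ a constant, whereas here I need them for $\mathcal T(k,\rho k)$. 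So the real content of the proof is to verify that the corresponding statements in~\cite{BPS1} (which do allow a small linear slack in the maximum degree, cf.\ the remark after Lemma~\ref{ES:app} and the discussion of~\cite{rohzon}) apply with $\Delta$ replaced by $\rho k$ for $\rho$ small enough depending on $\delta$; granting that, the deduction of Lemma~\ref{prop:connected_linear} is a direct adaptation of the argument for Theorem~\ref{thm:main}, with the ``at least $\lceil\delta n\rceil$ high-degree vertices'' hypothesis playing the role that the global average-degree-$(1+\delta)k$ hypothesis plays in Theorem~\ref{thm:es_app}.
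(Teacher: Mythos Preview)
Your proposal defers the entire argument to ``linear-degree analogues'' of Lemmas~\ref{emb:forest} and~\ref{lem:nonbipcon} that you neither prove nor cite precisely, and the architecture you import from Section~\ref{sec:smaaall} does not fit the hypotheses here. Lemmas~\ref{prop:robust_bip} and~\ref{prop:robust2} both rest on $d(G')\ge (1-3\sqrt\eta)k$; under the present assumptions we only have $\delta(G)\ge(1+\delta)\tfrac k2$ together with a $\delta$-fraction of vertices of degree $\ge(1+\delta)k$, so the average degree of $G$ may be well below $k$ and the density counts you invoke (e.g.\ that a bipartite reduced component must have a side of size exceeding $(1+\tfrac\delta4)k$, or that a small-matching component is ``almost complete of order roughly $k$'') simply fail. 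Note also that Theorem~\ref{thm:es_app} is \emph{derived from} Lemma~\ref{prop:connected_linear} in this paper, so appealing to the approximate Erd\H os--S\'os result or to a linear-degree form of Lemma~\ref{ES:app} would be circular.

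The paper's proof is structurally different and never splits on bipartiteness of $\mathscr R$. After regularising, it applies the structural Lemma~\ref{lem:structure}: the $\lceil\delta n\rceil$ high-degree vertices guarantee a single ``hub'' cluster $X$ with $\deg_{\mathscr R'}(X)\ge(1+\tfrac\delta2)k\tfrac{|\mathscr R'|}{n'}$, and around $X$ one finds two matchings $\mathscr M_W,\mathscr M_V$ and a bipartite piece $\mathscr H=(\mathscr A,\mathscr B)$ satisfying the size and degree bounds~(I)--(IV). The tree is then cut, via Lemma~\ref{prop:cut_linear}, by a small seed set $S\subseteq B$ at even distance from the root into components of size at most $\beta k$ (this is where $\Delta(T)\le\rho k$ with $\rho$ tiny enters). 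A balancing step (Lemma~\ref{lem:num_balance}) splits the components into families $\mathcal P_1,\mathcal P_2$ calibrated to the sizes of $\mathscr M_W$, $\mathscr M_V$ and $\mathscr A$. The embedding is then iterative: each seed $s_j$ is placed in $X$, and its pending components are routed into good edges of $\mathscr M_V$, of $\mathscr M_W$, or into $\mathscr H$, according to three cases governed by how full each part already is, while maintaining invariants (E1)--(E4). The high-degree hypothesis is used \emph{only} to secure the hub cluster $X$; everything else runs on the minimum-degree bound.
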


Now, Theorem~\ref{thm:es_app} follows from the Lemma~\ref{prop:connected_linear} together with
 Lemma~\ref{lem:con} from the appendix.

\begin{proof}[Proof of Theorem~\ref{thm:es_app}]
 Given  $\delta$ from Theorem~\ref{thm:es_app} (note that we may assume~$\delta<\frac 12$), let $k_0$ and $\rho$ be the output of Lemma~\ref{prop:connected_linear} for input $\delta^{12}$. Set $n_0:=\delta^{-1}k_0$ and set $\gamma:=\rho$. 
	
	Given $k$, $n$ and $G$, a standard argument\footnote{This is the same argument as the one given in the footnote in the Introduction, replacing $k$ with $(1+\delta )k$.} gives a subgraph $G'$  with $d(G')\ge (1+{\delta})k$ and $\delta(G')\ge (1+\delta)\frac k2$.
	If there are $\lceil\delta^{12} |G'|\rceil$ vertices in $G'$ of degree at least $(1+\delta^{12})k$, we are done by Lemma~\ref{prop:connected_linear}.
	So assume otherwise. Then by Lemma~\ref{lem:con}, with $f(v)=d_{G'}(v)$, $\eps=\delta^{12}$ and $t=(1+\delta)k$, we know $G'$ has at most $\delta^3|G'|\le \delta^3 n\le \delta^2 k$ vertices of degree less than $(1-\delta^3)(1+\delta)k$. Since $(1-\delta^3)(1+\delta)k\ge (1+\frac \delta 2)k$, we can simply delete these vertices, obtaining a subgraph $G''$ of $G$ with $\delta(G')\ge k$. We  greedily embed $T$ into $G''$.
\end{proof}

\subsection{Preparing for the proof of Lemma~\ref{prop:connected_linear}}

We start by stating a standard tool  (see~\cite{AKS, BPS1, LKS4, RS19a} for other versions and a proof).

\begin{lemma}\label{prop:cut2} Let $\beta\in(0,1)$. If $(T, r)$ is a rooted tree with $k\ge \beta^{-1}$ edges, then there is a set $S\subseteq V(T)$ with $r\in S$ and $|S|\le \beta^{-1} +2$ such that $|P|\le\beta k$ for each component $P$ of $T-S$.  
\end{lemma}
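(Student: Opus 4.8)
\textbf{Plan for the proof of Lemma~\ref{prop:cut2}.}
The statement is a standard ``cutting into small pieces'' lemma, and the plan is to build the set $S$ greedily, repeatedly slicing off a component that is too large by inserting one well-chosen vertex into $S$. Concretely, I would maintain a set $S$ (initially $S=\{r\}$) and, as long as some component $P$ of $T-S$ has $|P|>\beta k$, pick inside such a $P$ a vertex $v$ whose removal from $P$ leaves only components of size at most $\beta k$ together with the piece of $P$ ``above'' $v$ that reattaches to $S$. Such a $v$ exists by a classical centroid-type argument: root $P$ at the neighbour of the vertex of $S$ it attaches to, and walk down from the root, always moving into the child subtree of largest size; the first vertex $v$ reached at which every child subtree has size $\le \beta k$ works, because the subtree hanging from $v$ has size $>\beta k$ (that is why we kept descending) while each child subtree below $v$ is small, and the ``rest'' of $P$ above $v$ becomes a new component once $v$ is added to $S$.

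The key point is to bound $|S|$. Each time we add a vertex $v$ to $S$, the large component $P$ that prompted it is replaced by: the piece of $P$ lying strictly above $v$ (which may or may not still be large) plus several components each of size $\le \beta k$. I would argue that the number of iterations is at most $\beta^{-1}+1$ by a charging argument: when $v$ is added, the child subtrees of $v$ inside $P$ that are now permanently ``finished'' (they are components of $T-S$ of size $\le\beta k$ and will never be touched again) together with $v$ itself account for at least $\beta k$ vertices of $T$ which are removed from further consideration — more precisely, the subtree rooted at $v$ has more than $\beta k$ vertices, and all of it except possibly nothing is split off into finished small components plus the singleton $\{v\}$. Since $T$ has $k+1\le k+\beta^{-1}$... — here one has to be a little careful: a cleaner bookkeeping is to observe that each step either decreases the number of large components or, if it creates a new large component above $v$, it still permanently removes $\ge \beta k$ vertices (those in the small child-subtrees below $v$, which is $\ge |T_v|-|T_v^{\text{largest child}}|-1 \ge$ a definite amount) from the ``active'' part of the tree. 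Summing, the total number of vertices ever permanently removed is at most $k+1$, giving at most roughly $\beta^{-1}$ steps; adding the initial $r$ and rounding gives $|S|\le\beta^{-1}+2$.

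The main obstacle is precisely getting the count $|S|\le\beta^{-1}+2$ clean rather than off by a constant: naively each step removes $>\beta k$ fresh vertices so there are fewer than $\beta^{-1}+1$ steps, but one must make sure no vertex is charged twice (the vertex $v$ added to $S$ is charged once and never reused; the small child-subtrees below $v$ are finished and never reused) and that the ``leftover above $v$'' does not cause double counting — it doesn't, because its vertices are simply carried forward and re-examined in a later step with a strictly smaller active tree. I would therefore formalize the potential function $\Phi = $ (number of vertices of $T$ not in $S$ and not in any already-small component), show $\Phi$ starts at $\le k$, drops by more than $\beta k$ each iteration, and stays nonnegative, yielding fewer than $\beta^{-1}$ iterations and hence $|S|\le 1+\lfloor\beta^{-1}\rfloor\le\beta^{-1}+2$; each component of $T-S$ then has size $\le\beta k$ by construction. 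For a fully written-out argument I would simply cite~\cite{AKS, BPS1, LKS4, RS19a}, as the statement indicates.
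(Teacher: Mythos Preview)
The paper does not supply its own proof of this lemma; it merely cites~\cite{AKS, BPS1, LKS4, RS19a} for the standard argument. Your plan is exactly that standard argument: iteratively add to $S$ the lowest vertex $v$ in a still-too-large component $P$ with $|T_v|>\beta k$ (found by walking down from the root of $P$ into a child subtree of size $>\beta k$ until none exists), and bound the number of iterations via the potential $\Phi=$ (number of vertices in large components of $T-S$). Your mid-paragraph hesitation about the bookkeeping is unnecessary: at each step the \emph{entire} subtree $T_v$ leaves the active region (it splits into $\{v\}\subseteq S$ together with the child subtrees of $v$, each of size $\le\beta k$), while only $P\setminus T_v$ is carried forward, so there is no double counting. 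Since $|T_v|>\beta k$ and $\Phi$ starts at $\le k$, there are strictly fewer than $\beta^{-1}$ iterations, giving $|S|\le 1+\lfloor\beta^{-1}\rfloor\le\beta^{-1}+2$. Your outline is correct and coincides with the approach in the cited references.
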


We now show a variant of Lemma~\ref{prop:cut2}.
\begin{lemma}\label{prop:cut_linear}
	For all $\beta\in(0,\frac 12)$  and for every tree $(T,r)$  with $k\ge \beta^{-1}$ edges and $\Delta (T)\le \frac{\beta^2}{2} k$  there is a set $S\subseteq V(T)$ with $r\in S$ and $|S| < \beta k$ such that each $s\in S$ is at even distance from $r$ and each component of $T-S$ has at most $\beta k$ vertices. 
\end{lemma}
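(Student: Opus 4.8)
The plan is to bootstrap Lemma~\ref{prop:cut2} to obtain the stronger conclusion that all cut vertices lie at even distance from $r$, paying only a constant-factor loss in the size of $S$. First I would apply Lemma~\ref{prop:cut2} with parameter $\beta' := \beta/4$ to the rooted tree $(T,r)$, obtaining a set $S_0 \subseteq V(T)$ with $r \in S_0$, $|S_0| \le \beta'^{-1} + 2 = 4\beta^{-1} + 2$, and such that every component of $T - S_0$ has at most $\beta' k = \beta k / 4$ vertices. The issue is that a vertex $s \in S_0$ may be at odd distance from $r$; the fix is to replace each such $s$ by its parent (which is at even distance), and then patch up the resulting components.

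Concretely, let $S_1$ be obtained from $S_0$ by replacing every vertex of $S_0$ that is at odd distance from $r$ with its parent in $(T,r)$, and keeping all even-distance vertices; then add $r$ if it got lost (it cannot, since $r$ is at distance $0$). Every vertex of $S_1$ is at even distance from $r$, and $|S_1| \le |S_0| \le 4\beta^{-1} + 2$. Now I claim each component $P$ of $T - S_1$ is still controlled in size. A component of $T - S_1$ is contained in a component $Q$ of $T - S_0$ together with possibly some extra vertices that were ``freed'' when an odd-distance vertex $s \in S_0$ was demoted to its parent $p(s)$: namely the children of $s$ other than those leading back into $Q$-type pieces, and $s$ itself. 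But each such freed vertex $s$ has at most $\Delta(T) \le \beta^2 k / 2$ children, and below each child hangs a component of $T - S_0$ of size at most $\beta k/4$. So each demotion adds at most $1 + \Delta(T)\cdot(\beta k/4)$ vertices. Hmm — this bound is $\Theta(\beta^3 k^2 / 8)$, which is far too large. Let me reconsider.

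The correct bookkeeping: after demoting $s$ to $p(s)$, the vertex $p(s)$ is now in the cut set, so $s$ itself becomes a new component vertex, but $s$'s children are still present, and each child $c$ of $s$ roots a subtree that, in $T - S_0$, was already broken into components of size $\le \beta k/4$ by $S_0$ — crucially, $S_0$ might contain vertices strictly below $c$, so the subtree below $c$ need not be one small component. The safe statement is: the component of $T - S_1$ containing $s$ consists of $s$, plus, for each child $c$ of $s$ not in $S_0$, the portion of the subtree rooted at $c$ that lies above the next layer of $S_0$-vertices — and that portion is a single component of $T - S_0$, hence has $\le \beta k / 4$ vertices. So this component has size at most $1 + \Delta(T)\cdot \beta k/4 \le 1 + (\beta^2 k/2)(\beta k/4)$, still quadratic.

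To genuinely avoid the blow-up I would instead apply Lemma~\ref{prop:cut2} with a \emph{smaller} parameter and a different patching, or — cleaner — argue directly: root $T$ at $r$, run the standard BFS/greedy cut-vertex extraction but only ever cut at even levels. Formally, define $S$ greedily: process vertices of $T$ in order of increasing distance from $r$; whenever the subtree hanging below the current even-level vertex $v$ (in what remains) exceeds $\beta k$ vertices, but some even-level descendant's subtree is $\le \beta k$ while its even-level grandparent's is $> \beta k$, add an appropriate even-level vertex to $S$. The standard analysis (as in the proof of Lemma~\ref{prop:cut2}, working on the ``even tree'' $T'$ whose vertices are the even-level vertices of $T$ with $u \sim w$ iff $u$ is a grandparent of $w$) gives a set $S$ of even-level vertices, $|S| \le \beta^{-1} + O(1) < \beta k$ for $k \ge \beta^{-1}$ large, such that every component of $T' - S$ has $\le \beta k$ vertices; one then checks that removing these even-level vertices from $T$ itself also leaves components of size $\le \beta k$, using that each component of $T - S$ injects into a component of $T' - S$ up to a factor accounting for odd-level vertices, and that the maximum degree bound $\Delta(T) \le \beta^2 k/2$ keeps any single ``odd layer'' from being too large relative to the even layers it sits between — the fact that $\min$ of the two bipartition classes is $\ge k/\Delta$ (Fact~\ref{thisisafact}) prevents degeneracy.

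The main obstacle, as the scratch work above shows, is precisely controlling component sizes after restricting cuts to even levels: naively one loses a factor of $\Delta(T)$, which is why the hypothesis $\Delta(T) \le \beta^2 k/2$ is present and must be used to absorb exactly that loss (so that $\Delta(T) \cdot (\text{small-component bound})$ stays $\le \beta k$ when the small-component bound is taken as $\beta k / (\text{something involving }\Delta)$, forcing the initial application of Lemma~\ref{prop:cut2} to be run with parameter $\asymp \beta/\Delta$ rescaled appropriately — but then $|S|$ grows by $\Delta$, which is fine only because $|S| \le \beta k$ allows $\beta k \asymp \Delta \cdot \beta^{-1}$, consistent with $\Delta \le \beta^2 k$). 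I would organize the final write-up around getting these three inequalities — size of $S$, size of components, all cuts even — to close simultaneously with the given numerical constants.
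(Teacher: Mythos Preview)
Your instinct to bootstrap from Lemma~\ref{prop:cut2} is right, and you correctly diagnose why replacing an odd-distance cut vertex $s$ by its \emph{parent} alone fails: the children of $s$ get released and can merge with the subtrees below them into an uncontrolled component. But the remedy is much simpler than your ``even tree'' detour, and you are overcomplicating the role of the degree bound.

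The missing idea is to replace each odd-distance vertex $s\in S'$ by its \emph{entire neighbourhood} $N_T(s)$, not just its parent. Set $S:=(S'\setminus S_{\mathrm{odd}})\cup N_T(S_{\mathrm{odd}})$. Every neighbour of an odd-distance vertex lies at even distance from $r$, so all of $S$ is at even distance, and $r\in S'\setminus S_{\mathrm{odd}}\subseteq S$. Now look at a component $C$ of $T-S$. If $C$ contains some $s\in S_{\mathrm{odd}}$, then since all neighbours of $s$ are in $S$, the component is the single vertex $\{s\}$. If $C$ avoids $S_{\mathrm{odd}}$, then $C$ avoids $S\cup S_{\mathrm{odd}}\supseteq S'$, so $C$ is contained in a component of $T-S'$ and hence has at most $\beta k$ vertices. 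No factor of $\Delta(T)$ ever enters the component-size bound; in particular there is no need to shrink the parameter to $\beta/4$, and your whole analysis of ``freed'' subtrees and the quadratic blow-up disappears.

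The hypothesis $\Delta(T)\le \tfrac{\beta^2}{2}k$ is used only to bound $|S|$: applying Lemma~\ref{prop:cut2} with the original $\beta$ gives $|S'|<2/\beta$, and then
\[
|S|\ \le\ |S'| + |S_{\mathrm{odd}}|\cdot\Delta(T)\ <\ \frac{2}{\beta}\cdot\frac{\beta^2}{2}\,k\ =\ \beta k,
\]
up to the lower-order term $|S'|$. Your proposed direct construction on the ``even tree'' and the appeal to Fact~\ref{thisisafact} are unnecessary.
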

\begin{proof}
	Given  $(T,r)$, Lemma~\ref{prop:cut2} yields a set  $S'$ with 
$ |S'|\le\frac 1\beta +2< \frac{2}{\beta}.$
 Let $S_{odd}$ be the set of all vertices in $S'$ that lie at odd  distance from $r$, and set  $S:=(S'-S_{odd})\cup N_T(S_{odd}).$	
Note that each component of $T-S$ either is a component of $T-S'$, or consists of a single vertex from~$S_{odd}$. To see that $|S| < \beta k$, note that	
	$\textstyle |S|\le |S'|+|S_{odd}|\cdot\Delta(T)< \frac{2}{\beta}\cdot \frac{\beta^2}{2}  k= \beta k.$
\end{proof}

 The next lemma will help us with grouping the components fo $T-S$ into convenient sets.

\begin{lemma}\label{lem:num_balance}
	Let $I$ be a finite set, let $M, \lambda>0$, and let $a_i, b_i\in \mathbb R$, with $a_i+b_i\leq \lambda$, for each $i\in I$. Then  there is a set $J\subseteq I$ such that 
	$$\textstyle\min\{M-\lambda,\sum_{i\in I}(a_i+b_i)\}\leq\sum_{i\in J}(a_i+b_i)\leq M \ \ \ \text{and} \ \  \ \frac{\sum_{i\in J}a_i}{\sum_{i\in J}b_i}\geq \frac{\sum_{i\in I}a_i}{\sum_{i\in I}b_i}.$$
\end{lemma}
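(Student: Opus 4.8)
The statement is a purely finite, elementary optimization/averaging fact, so I would prove it by a direct greedy/extremal argument rather than anything clever. Write $a=\sum_{i\in I}a_i$ and $b=\sum_{i\in I}b_i$, so the target ratio is $a/b$. The plan is to order the indices of $I$ by decreasing ratio $a_i/b_i$ (putting indices with $b_i=0$ and $a_i>0$ first, and indices with $a_i=b_i=0$ last, and treating the sign conventions carefully), and then take $J$ to be the prefix of this ordering that makes $\sum_{i\in J}(a_i+b_i)$ as large as possible without exceeding $M$. The intuition is the standard ``fractional knapsack / rearrangement'' one: taking the most $a$-heavy pieces first can only push the ratio $\sum_{J}a_i/\sum_J b_i$ above the global average $a/b$.

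\textbf{Key steps.} First I would reduce to the case $\sum_{i\in I}(a_i+b_i)>M$, since otherwise $J=I$ works trivially (both the lower bound $\min\{M-\lambda,\sum_I(a_i+b_i)\}\le\sum_I(a_i+b_i)$ and the ratio condition with equality hold). Second, fix the ordering $i_1,i_2,\dots$ with $a_{i_1}/b_{i_1}\ge a_{i_2}/b_{i_2}\ge\cdots$ (suitably interpreted when denominators vanish), let $J=\{i_1,\dots,i_m\}$ be the longest prefix with $S_J:=\sum_{i\in J}(a_i+b_i)\le M$, and note that adding the next element $i_{m+1}$ would exceed $M$; since each $a_i+b_i\le\lambda$, removing that hypothetical last element shows $S_J> M-\lambda$, giving the lower bound $\min\{M-\lambda,\sum_I(a_i+b_i)\}\le S_J$, while $S_J\le M$ by construction. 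Third, and this is the substantive point, I would show the ratio inequality: let $A=\sum_{i\in J}a_i$, $B=\sum_{i\in J}b_i$, so that $a-A=\sum_{i\notin J}a_i$ and $b-B=\sum_{i\notin J}b_i$. Because every index in $J$ has ratio at least that of every index outside $J$, a short averaging/cross-multiplication argument gives $\tfrac{A}{B}\ge\tfrac{a-A}{b-B}$ (the prefix average dominates the suffix average), and from $\tfrac{A}{B}\ge\tfrac{a-A}{b-B}$ one deduces $\tfrac{A}{B}\ge\tfrac{A+(a-A)}{B+(b-B)}=\tfrac{a}{b}$ by the mediant inequality. That is exactly the required $\sum_{i\in J}a_i/\sum_{i\in J}b_i\ge\sum_{i\in I}a_i/\sum_{i\in I}b_i$.

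\textbf{Main obstacle.} The only real care needed is degenerate denominators and signs. The lemma as stated allows $a_i,b_i\in\mathbb R$, so in principle $b_i$ could be $0$ or negative, in which case ``order by $a_i/b_i$'' is not literally meaningful and ``$A/B$'' may be undefined. In the intended application the $a_i,b_i$ count vertices in colour classes of components of $T-S$, hence are nonnegative, and $\sum b_i>0$; so I would either (i) state and prove the lemma under the tacit hypothesis $a_i,b_i\ge 0$ with $\sum_{i\in I}b_i>0$ (matching the usage), or (ii) handle $b_i=0$ by placing those indices at the front of the ordering among the positive-ratio ones, and dispose of the trivial cases $B=0$ or $b-B=0$ separately. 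The combinatorial heart — that a greedily chosen ratio-heavy prefix beats the global mean — is the mediant/rearrangement inequality and is routine once the ordering is set up correctly; I do not expect it to present genuine difficulty, only bookkeeping.
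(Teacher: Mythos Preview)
Your approach is essentially identical to the paper's: it also orders $I$ by increasing $b_i/a_i$ (equivalently, decreasing $a_i/b_i$) and takes $J$ to be the maximal prefix with $\sum_{i\in J}(a_i+b_i)\le M$, then simply asserts that ``it is easy to see that this choice is as desired.'' Your write-up is in fact more careful than the paper's, since you spell out the mediant argument for the ratio, the $M-\lambda$ lower bound, and the degenerate-denominator issues that the paper leaves implicit.
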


\begin{proof}
	 Define a total order $\preceq$ on $I$ by setting $i\preceq j$ if $\frac{b_i}{a_i}<\frac{b_j}{a_j}$, and ordering arbitrarily those $i,j$ with $\frac{b_i}{a_i}=\frac{b_j}{a_j}$. Let $j^{*}$ be maximal with $\textstyle\sum_{i\preceq j^{*}}(a_i+b_i)\leq M$ and set $J:=\{j\in I\,:\,j\preceq j^{*}\}$. It is is easy to see that this choice is as desired.
\end{proof}

We will now find  a specific structure in the regularised host graph $G$. 

\begin{lemma}\label{lem:structure}
For all $\varepsilon, \eta, \delta>0$ with $\varepsilon<\eta<\delta<1$ and  $M_0\ge \frac 1{\eps}$ there is $k_0\in\mathbb N$ such that for all $n,k\ge k_0$ and for every $n$-vertex graph $G$ with $\delta(G)\ge(1+{\delta})\frac{k}{2}$ and
  at least $\lceil\delta n\rceil$ vertices of degree at least $(1+\delta)k$
  the following holds. \\
If $G$ has an $(\varepsilon,\eta)$-regular partition into $M_0$ parts,
	then $G$ has a  subgraph $G'$ on $n'\ge n-M_0$ vertices that has a $(5\varepsilon,\eta-\varepsilon)$-regular partition with $2M_0$ parts. Moreover, the  corresponding reduced graph $\mathscr R'$ contains two matchings $\mathscr M_W$ and $\mathscr M_V$, a bipartite subgraph $\mathscr {H=(A,B)}\subseteq \mathscr R'\setminus V(\mathscr M_W)$, and a cluster $X\in V(\mathscr R')$ satisfying
	\begin{enumerate}[(I)]
		\item\label{lem:str:I} $V(\mathscr M_W)\cap V(\mathscr M_V)=\emptyset=\mathscr{A}\cap V(\mathscr M_V)$;
		\item\label{lem:str:II}\label{lem:str:III}\label{lem:str:IV}  $V(\mathscr M_W)\cup \mathscr A\subseteq N(X)$,
		 and every edge in $\mathscr M_V$ has  exactly one endpoint in $N(X)$;
		\item\label{lem:str:VI}  $|V(\mathscr M_W)|+\frac{|V(\mathscr M_V)|}{2}+|\mathscr{A}| \ge (1+\tfrac\delta2)k\tfrac{|\mathscr R'|}{n'}$; and
		\item\label{lem:str:V} $\deg_{\mathscr R'}(A,\mathscr{B})\ge (1+\tfrac{\delta}2)\tfrac k2\tfrac{|\mathscr R'|}{n'} - \tfrac{|V(\mathscr M_W)|}{2}\text{ for every $A\in \mathscr A$.}$
	\end{enumerate}	
\end{lemma}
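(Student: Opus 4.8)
The plan is to apply the matching lemma (Lemma~\ref{lem:matching2}) twice, once to isolate the ``cluster $X$ with its neighbourhood'' skeleton and once more to extract the second matching $\mathscr M_V$ crossing that neighbourhood. First I would fix $k_0$ large and apply Lemma~\ref{reg:deg}'s output (the given $(\eps,\eta)$-partition of $G$ into $M_0$ parts). Among the $\ge\lceil\delta n\rceil$ vertices of degree $\ge(1+\delta)k$, a pigeonhole argument over the $M_0$ clusters produces one cluster $X_0$ that contains $\ge \delta n/M_0 > \eps|X_0|$ such high-degree vertices; applying Fact~\ref{fact:3}\eqref{fact:3,2} (or Fact~\ref{fact:1}(i)) to a typical such vertex, $X_0$ has $\deg_{\mathscr R}(X_0)\ge(1+\delta)k\cdot\frac{|\mathscr R|}{n}(1-o(1))$, i.e.\ the reduced-graph neighbourhood $\mathscr N:=N_{\mathscr R}(X_0)$ collects at least $(1+\delta)k\frac{|\mathscr R|}{n}$ clusters (up to the regularity error). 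This is the ``cluster $X$'' of the statement.

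Next I would run Lemma~\ref{lem:matching2} on $G$ with $\ell=M_0$ and with $t$ chosen so that $t+\ell$ is comfortably below $\delta(G)\ge(1+\delta)\frac k2$; concretely $t=(1+\frac{3\delta}{4})\frac k2$ works for $k$ large. This refines the partition to $2M_0$ parts (parameters $(5\eps,\eta-\eps)$), and gives a reduced graph $\mathscr R'$ with a matching $\mathscr M$ and an independent family $\mathscr I$ covering $V(\mathscr R')$, with $N_{\mathscr R'}(\mathscr I)\subseteq\mathscr V_1$ where $V(\mathscr M)=\mathscr V_1\cup\mathscr V_2$ and every $\mathscr M$-edge crosses $\mathscr V_1$--$\mathscr V_2$, and $|{\bigcup}V(\mathscr M)|\ge 2t$. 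The cluster $X$ of the statement should be taken inside the refinement of $X_0$; since $X_0$ was $\eps$-dense in high-degree vertices, one of its two halves still is, and its reduced-graph neighbourhood $N(X)$ still contains $(1+\frac{3\delta}{4})k\frac{|\mathscr R'|}{n'}$ clusters. I would then split the matching $\mathscr M$ into $\mathscr M_W$ (edges with \emph{both} endpoints in $N(X)$) and $\mathscr M_V$ (edges with \emph{exactly one} endpoint in $N(X)$), discarding the few edges with no endpoint in $N(X)$; and set $\mathscr A:=\mathscr I\cap N(X)$, $\mathscr B:=N_{\mathscr R'}(\mathscr A)$, $\mathscr H=(\mathscr A,\mathscr B)$. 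Property~\eqref{lem:str:I} then holds because $V(\mathscr M)$ is disjoint from $\mathscr I\supseteq\mathscr A$ and from $V(\mathscr M_V)\subseteq V(\mathscr M)$; property~\eqref{lem:str:II} is immediate from the definitions of $\mathscr M_W,\mathscr M_V$ and from $\mathscr A\subseteq N(X)$.

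For the counting properties~\eqref{lem:str:VI} and~\eqref{lem:str:V} I would argue as follows. Every cluster of $\mathscr R'$ lying in $N(X)$ is either in $V(\mathscr M_W)$ (two per $\mathscr M_W$-edge), or is the $N(X)$-endpoint of an $\mathscr M_V$-edge (one per $\mathscr M_V$-edge), or lies in $\mathscr I\cap N(X)=\mathscr A$ --- up to the $O(1)$ clusters lost to ``both endpoints outside $N(X)$'' edges and to the Lemma~\ref{lem:matching2} covering slack; since $|N(X)|\ge(1+\frac{3\delta}{4})k\frac{|\mathscr R'|}{n'}$ and the lost clusters number at most $M_0=o(k\frac{|\mathscr R'|}{n'})$, this gives~\eqref{lem:str:VI} with room to spare (replacing $\frac{3\delta}4$ by $\frac\delta2$). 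For~\eqref{lem:str:V}: fix $A\in\mathscr A$. Its reduced-graph degree is $\deg_{\mathscr R'}(A)\ge(1-o(1))(1+\delta)\frac k2\frac{|\mathscr R'|}{n'}$ by the minimum-degree hypothesis $\delta(G)\ge(1+\delta)\frac k2$ applied to a typical vertex of $A$ via Fact~\ref{fact:3}\eqref{fact:3,1}. Its neighbours in $\mathscr R'$ split between $\mathscr B=N_{\mathscr R'}(\mathscr A)$ and $V(\mathscr M_W)$ (it has no neighbour in $V(\mathscr M_V)$'s $N(X)$-side outside... wait --- here I need $N_{\mathscr R'}(\mathscr I)\subseteq\mathscr V_1$ to control where $A$'s neighbours go). Since $N_{\mathscr R'}(\mathscr A)\subseteq\mathscr V_1\cup$(clusters not in any matching)$=\mathscr B$, and the only matching clusters $A$ can see are in $\mathscr V_1$, at most $|V(\mathscr M_W)\cap\mathscr V_1|\le\frac{|V(\mathscr M_W)|}{2}$ of $A$'s neighbours lie in $V(\mathscr M_W)$; hence $\deg_{\mathscr R'}(A,\mathscr B)\ge\deg_{\mathscr R'}(A)-\frac{|V(\mathscr M_W)|}{2}\ge(1+\frac\delta2)\frac k2\frac{|\mathscr R'|}{n'}-\frac{|V(\mathscr M_W)|}{2}$, which is~\eqref{lem:str:V}.

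The main obstacle is bookkeeping the regularity error terms and the constant-sized losses ($M_0$ clusters from passing to $G'$, the ``no endpoint in $N(X)$'' matching edges, the $\sqrt\eps$-fraction of atypical vertices in each cluster) so that all of them get absorbed into the gap between $1+\delta$ and $1+\frac\delta2$; since $n,k\to\infty$ while $M_0,\Delta$ stay bounded and $\eps\ll\eta\ll\delta$, every such loss is $o(k\frac{|\mathscr R'|}{n'})$, so this is routine but requires care in choosing $t$ and the intermediate constants ($1+\frac{3\delta}4$ etc.). A secondary subtlety is to check that the partition of $V(\mathscr M)$ provided by Lemma~\ref{lem:matching2}\eqref{partition} is compatible with the $N(X)$-based split --- this is fine because $\mathscr M_W$ and $\mathscr M_V$ are defined purely in terms of $N(X)$ and inherit the crossing structure of $\mathscr M$, and $\mathscr A\subseteq\mathscr I$ has all its neighbours in $\mathscr V_1$ by~\eqref{partition}.
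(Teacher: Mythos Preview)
Your approach is essentially the paper's: apply Lemma~\ref{lem:matching2} once (not twice --- your opening sentence is misleading, but your execution only uses one application) to obtain $\mathscr M$ and $\mathscr I$, locate a high-degree cluster $X$, and then define $\mathscr M_W$, $\mathscr M_V$, $\mathscr A$, $\mathscr B$ by how clusters sit relative to $N(X)$. Two small corrections: you must set $\mathscr B := N_{\mathscr R'}(\mathscr A)\setminus V(\mathscr M_W)$ rather than $N_{\mathscr R'}(\mathscr A)$, since the statement demands $\mathscr H\subseteq\mathscr R'\setminus V(\mathscr M_W)$ (your argument for~\eqref{lem:str:V} already implicitly does this); and it is cleaner to find $X$ directly in $\mathscr R'$ after refining --- since $\ge\lceil\tfrac\delta2|G'|\rceil$ vertices of $G'$ still have degree $\ge(1+\tfrac\delta2)k$, some cluster of $\mathscr R'$ has high average degree and hence, by Fact~\ref{fact:3}\eqref{fact:3,1}, the required reduced-graph degree --- rather than tracking $X_0$ through the refinement. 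One genuine (minor) difference: you take $\mathscr M_W$ to be the submatching of $\mathscr M$ with both endpoints in $N(X)$, whereas the paper takes $\mathscr M_W$ to be a \emph{maximal} matching inside $N(X)$ that may also use $\mathscr I$--$V(\mathscr M)$ edges; this buys the paper the extra property that no edges run between $\mathscr A$ and the $N(X)$-endpoints of $\mathscr M_V$, but your simpler choice already yields properties~\eqref{lem:str:I}--\eqref{lem:str:V} as stated, since $\mathscr M_W\subseteq\mathscr M$ guarantees each $\mathscr M_W$-edge crosses $\mathscr V_1$--$\mathscr V_2$ and hence $|V(\mathscr M_W)\cap\mathscr V_1|=\tfrac{|V(\mathscr M_W)|}{2}$ exactly.
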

\begin{figure}[h!]
	\centering
	\includegraphics[width=0.45\linewidth]{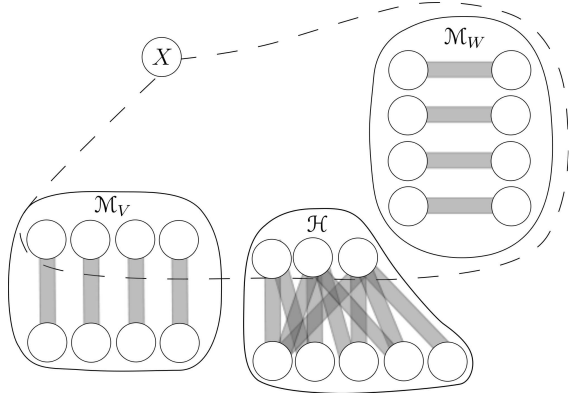}
	\caption{Structure given by Lemma~\ref{lem:structure}}
	\label{lab:structure}
\end{figure}

\begin{proof}
Set $k_0=\frac{M_0}\eps$.	Apply Lemma \ref{lem:matching2} to $G$, with $t=(1+\delta)\frac k2$ and $\ell=|\mathscr R|$, to obtain a subgraph~$G'$, with a $(5\varepsilon,\eta-\varepsilon)$-regular partition into $2M_0$ parts whose  corresponding reduced graph $\mathscr R'$  contains a matching $\mathscr M$ and an independent set $\mathscr I$ with the properties stated in the lemma. 
By the choice of $k_0$ and $\eps$, and by our assumption on $G$,
at least $\lceil\tfrac{\delta}2|G'|\rceil$ vertices of $G'$ have degree at least $(1+\tfrac\delta 2)k$.
	So,  there is a cluster $X\in V(\mathscr R')$ with 
	\begin{equation}\label{deg_C_S}
	\deg_{\mathscr R'}(X)\ge (1+\tfrac\delta2){k}\tfrac{|\mathscr R'|}{|G'|}.
	\end{equation}
	
	Let $\mathscr M_W$ be a maximal matching contained in $N(X)$, so that for every $CD\in \mathscr M_W$ either $CD\in \mathscr M$ or $C\in\mathscr  I$ and $D\in V(\mathscr{M})$. This choice ensures that there are no edges between 
	$\mathscr{A}:=\big(N_{\mathscr R'}(X)\cap\mathscr{I}\big)\setminus V(\mathscr M_W)$
	 and $\big(N_{\mathscr R'}(X)\cap V(\mathscr{M})\big)\setminus V(\mathscr M_W)$. 
	Set  
	$\mathscr{B}:=N_{\mathscr R'}(\mathscr{A})\setminus V(\mathscr M_W).$
	Let $\mathscr M_V$ consist of all edges in $\mathscr M-\mathscr M_W$ having one endpoint in $N(X)$. 
	
	By construction, properties $(I)-(II)$ hold, and $(III)$ holds because of~\eqref{deg_C_S}. Finally,~$(IV)$ holds because of our assumption on the minimum degree of $G$, and since any $A\in\mathscr A \subseteq \mathscr I$ sees at most one endpoint of each edge from $\mathscr M_W$.
\end{proof}

\subsection{Proof of Lemma~\ref{prop:connected_linear}}\label{sec:pow}

\begin{proof}
Given $\delta$, we choose $\eps$ and $\eta$ such that 
	$0<\eps\ll\eta\ll\delta.$ Apply
Lemma~\ref{reg:deg} with parameters  $\frac\eps 5$ and $m_0=\frac 1\eps$ to obtain numbers $N_0$ and $M_0$. Set $k_0:=\max\{N_0, \frac{2M_0}{\eps}, k'_0\}$ where $k'_0$ comes from Lemma \ref{lem:structure}, with input $\frac{\eps}5,2\eta$ and $\tfrac\delta2$. Set  $\rho:=\frac{\eps^2}{16M_0^2}$. 

Given $n$, $k$ and $G$, Lemma~\ref{reg:deg} yields a subgraph $G''$ of $G$ with 
 $\delta(G'')\ge (1+\tfrac{\delta}2)\frac k2$ having at least $\lceil\tfrac\delta2|G''|\rceil$ vertices of degree at least $(1+\tfrac\delta 2)k$, which has an $(\frac\eps5,2\eta)$-re\-gu\-lar partition. 
	 Apply Lemma \ref{lem:structure} to $G''$ to obtain a subgraph $G'\subseteq G''$ having an $(\varepsilon,\eta)$-regular partition with reduced graph $\mathscr R'$, which contains a cluster~$X$,  matchings $\mathscr M_V$ and $\mathscr M_W$, and a bipartite subgraph $\mathscr H=(\mathscr A,\mathscr B)$ satisfying properties $(I)-(IV)$. 
	 	
Let $T\in \mathcal T(k,\rho k)$ be given, with colour classes $A,B$. Our aim is to embed $T$ into $G'$. We may assume  $|A|\geq |B|$ and choose any $r\in B$. Apply Lemma \ref{prop:cut_linear} to $(T,r)$, with  $\beta=\frac{\eps}{|\mathscr R'|}$, to obtain a set $S\subseteq B$ with $|S|< \beta k$ and a set $\mathcal{P}$ containing all components of $T-S$. 
	
Lemma~\ref{lem:num_balance} with $\mathcal P$, $|A\cap V(P)|, |B\cap V(P)|$, $M:=k-(1-11\sqrt[4]\eps)|\bigcup V(\mathscr M_W)|$ and $\lambda:=\beta k$, yields a set $\mathcal P_1\subseteq \mathcal P$ fulfilling 
	\begin{enumerate}[(a)]
		\item\label{eq:lin:3} $\sum_{P\in \mathcal P_1}a_P\geq \dfrac{|A\setminus S|}{|B\setminus S|} \cdot \sum_{P\in \mathcal P_1}b_P> \dfrac{|B|-\beta k}{|B|} \cdot \sum_{P\in \mathcal P_1}b_P \geq \sum_{P\in \mathcal P_1}b_P - \beta k$; 
		\item\label{eq:lin:1}$ M-\beta k \le \sum_{P\in \mathcal P_1}|P|\le M$;
	\end{enumerate}
	Setting $\mathcal P_2:=\mathcal P\setminus \mathcal P_1$, from the first inequality in~\eqref{eq:lin:1} we infer that
	\begin{equation}\label{eqn:size2}
	\textstyle\sum_{P\in  \mathcal P_2}|P| 
	\le (1-10\sqrt[4]{\varepsilon})|\bigcup V(\mathscr M_W)|.
	\end{equation}
		Furthermore, by the second inequality in~\eqref{eq:lin:1} and by Lemma \ref{lem:structure}~\eqref{lem:str:VI}, 
	\begin{align}
	\textstyle \sum_{P'\in \mathcal P_1}|P'|
	 \le (1-10\sqrt[4]\eps)\textstyle\Big(\frac{|\bigcup V(\mathscr M_V)|}2+|\bigcup \mathscr A|\Big).\label{eq:J}\end{align}

We will construct an embedding  $\phi$ of $T$ into $G'$ iteratively in $|S|$ steps. In each step~$j$, we  embed some  $s_j\in S$ together with all  subtrees `below' $s_j$. We go through $S$ in an order that ensures our  embedding remains connected throughout the process, that is, we choose $s_1:=r$, and for $j\ge 2$ we choose  any yet unembedded $s_j\in S$  whose parent is already embedded. 
	Write $U_j(C)$ for the set of all unused vertices in a cluster $C$ at the beginning of step $j$.
	Four conditions will hold throughout the embedding process: 
	\begin{enumerate}[(E1)]
		\item If $j\ge 2$, the parent of $s_j$ is embedded into a vertex that is typical to~$X$.\label{p:parent}
		\item\label{E:significant} $|U_j(C)|> 5\sqrt[4]{\varepsilon}|C|$ for every cluster $C$.\label{p:good}
		\item $\bigcup  \mathcal P_2$ is  embedded into $ \bigcup V(\mathscr M_W)$,
		 \smallskip $\bigcup   \mathcal P_1\cap A$ is  embedded into $\bigcup V(\mathscr{A}\cup (\mathscr M_V \cap N(X)))$ and  
		  $\bigcup   \mathcal P_1\cap B$ is  embedded into $\bigcup V(\mathscr B\cup (V(\mathscr M_V)\setminus N(X)))$.\label{p:embJ}
		\item $\Big||U_j(C)|-|U_j( D)|\Big|\le\eps |C|$ for every edge $CD\in \mathscr M_W$.\label{p:balance}
	\end{enumerate} 	
	Now  suppose  we are at step $j\le |S|$. Choose $s_j\in S$ as detailed above.  Set 
	$$\mathscr{Y}:=\{U_j(C)\,:\,C\in  N_{\mathscr R'}(X)\cap \big(V(\mathscr M_V)\cup V(\mathscr M_W) \cup\mathscr{A}\big).$$
	Note that~(E2) ensures that every set in $\mathscr{Y}$ is significant. Since $S$ is small, we can use Fact~\ref{fact:3}~\eqref{fact:3,2} to obtain a set $X'\subset X\setminus\phi(S)$ with $|X'|\geq(1-4\sqrt{\varepsilon})|X|$  such that  
	\begin{equation}\label{eqn:deg_seed}
	\text{every $v\in X'$   is typical to at least $(1-\sqrt{\eps})|\mathscr Y|$ clusters in $\mathscr Y$.}
	\end{equation} 
	
 If $j\ge 2$, let $w$ be the image of the parent of $s_j$. By (E1), $\deg(w,X)\geq \frac{\eta }{2}|X|,$
	and hence
	$\deg(w,X')\geq\frac{\eta}{4}|X|>\beta k>|S|.$
	In particular, we can choose some vertex $v_j\in X'\cap U_j(X)$ (adjacent to $w$, if $j\ge 2$)  as $\phi(s_j)$. Now  reserve some space for  the children of $s_j$. For each  cluster $C$ such that $v_j$ is typical towards $U_j(C)$, let $C_r$ be any set of $2\varepsilon |C|+\rho k$ vertices in $N(v_j)\cap U_j(C)$. For convenience, say  
	$C\in V(\mathscr R')$ is \emph{good} if $|U_j(C)|\geq 7\sqrt[4]{\varepsilon}|C|$, and say  $CD\in E(\mathscr R')$ is \emph{good} if both $C$ and $D$ are good.

	It remains to embed  all components of $T-S$ adjacent to $s_j$ that have not been embedded yet. Let $P$ be such a component. 
 We distinguish three cases.\smallskip
	
\noindent\textbf{Case 1: } $P\in \mathcal P_1$ and there are more than $\big(1-10\sqrt[4]{\varepsilon}\big)\frac{|\bigcup V(\mathscr M_V)|}2$ unused  vertices in $\bigcup V(\mathscr M_V)$.\smallskip

In this case there are more than $\sqrt{\varepsilon}|\mathscr M_V|$ good edges in $\mathscr M_V$. Indeed, otherwise,
	\begin{align*}
	 \textstyle\big(1+10\sqrt[4]{\varepsilon}\big)\frac{|\bigcup V(\mathscr M_V)|}2&\leq \sum_{CD\in \mathscr M_V, CD\text{ good}}(|U_j(C)\cup U_j(D)|) + \sum_{CD\in \mathscr M_V, CD\text{ bad}}(|U_j(C)\cup  U_j(D)|)\\  \textstyle
	& \leq \sqrt{\varepsilon}|\mathscr M_V|\cdot  2\textstyle\frac{|G'|}{|\mathscr R'|} + |\mathscr M_V|\cdot (1+7\sqrt[4]{\varepsilon})\frac{|G'|}{|\mathscr R'|},
	\end{align*} 
	 a contradiction. So by~\eqref{eqn:deg_seed} there is a good edge $CD\in \mathscr M_V$, with $C\in N(X)$, and $v_j$  typical to $U_j(D)$. Embed the root of $P$ into $C_r$ and use Lemma~\ref{lem:T1} to embed the remaining vertices into $(U_j(C)\cup U_j(D))\setminus (C_r\cup D_r)$. In particular, all of $A\cap V(P)$ is mapped to $C$. We take care to embed  parents of vertices in $S$ into vertices that are typical to $X$. So, properties (E1)-(E4) continue to hold after this step (for (E2), recall that $CD$ is good and $|P|\le\beta k\le \sqrt[4]\eps|C|$).\smallskip
	
\noindent\textbf{Case 2: } $P\in \mathcal P_1$ and at least $\big(1-10\sqrt[4]{\varepsilon}\big)\frac{|\bigcup V(\mathscr M_V)|}2$ vertices of  $\bigcup V(\mathscr M_V)$ have been used already.

In this case, \eqref{eq:J} ensures there are at least $10\sqrt[4]{\varepsilon}|\bigcup \mathscr A|$ unused vertices in $\bigcup \mathscr A$. So, 
 there are more than $\sqrt{\varepsilon}|\mathscr A|$ good clusters in $\mathscr A$, as otherwise we reach a contradiction by calculating
$$\textstyle{10\sqrt[4]{\varepsilon}|\bigcup \mathscr{A}|}  \le  \textstyle{\sum_{C\in \mathscr{A}}|U_j(C)|}
	\leq \textstyle\sqrt{\varepsilon}|\bigcup\mathscr A|+ |\bigcup\mathscr A|7\sqrt[4]{\varepsilon}.$$                      
 By~\eqref{eqn:deg_seed},  $v_j$ is typical towards $U_j(C)$ for some  good $C\in \mathscr A$.  Moreover, there is a good cluster $D\in N_{\mathscr R'}(C)\cap \mathscr B$, as otherwise we must have already used more than 
\begin{align*}
 \textstyle(1-7\sqrt[4]{\varepsilon})\deg_{\mathscr R'}(C,\mathscr{B})\cdot \frac{|G'|}{|\mathscr R'|} & \geq \textstyle (1-7\sqrt[4]{\varepsilon})\cdot\tfrac{1}{2}\big((1+\tfrac{\delta}{4})k - |\bigcup V(\mathscr{M_W})|\big) \\
& \ge  \textstyle  \tfrac{1}{2}\big(k - (1-11\sqrt[4]{\varepsilon})|\bigcup V(\mathscr{M_W})|\big) \\
& \textstyle > \frac 12\cdot \sum_{P'\in \mathcal P_1}|P'|
\end{align*}
  vertices of $\bigcup\mathscr B$ (where the first inequality comes from Lemma~\ref{lem:structure}~\eqref{lem:str:V}, and the last one from the second inequality in~\eqref{eq:lin:1}). But this is impossible since by (E3) and by~\eqref{eq:lin:3}, we know that  $\bigcup \mathscr A$ hosts at least as many vertices from $V(\bigcup\mathcal P_1)$ as  $\bigcup \mathscr B$ does, up to an error term of~$\beta k$.
 
	Embed the root of $P$ into $C_r$ and use Lemma~\ref{lem:T1} to embed the rest of $P$  into $(U_j(C)\cup U_j(D))\setminus (C_r\cup D_r)$. Parents of vertices in $S$ are embedded into vertices that are typical to~$X$. \smallskip
	
\noindent\textbf{Case 3: } $P\in \mathcal P_2$.

	Using~\eqref{eqn:size2} and (E4) we see as above  there is a good edge $CD\in \mathscr M_W$, with~$v_j$  typical to both  $U_j(C)$ and  $U_j(D)$. Embed $P$ into $U_j(C)\cup U_j(D)$ avoiding $C_r\cup D_r$, except for the root $r_P$ of $P$. Note that we can choose into which of $C_r$ or $D_r$ we embed $r_P$, and we choose wisely so that after the embedding of $P$, (E4) still holds. As always, we embed parents of vertices in~$S$  into vertices that are typical to $X$. This finishes the embedding for Case 3, and thus the proof of Lemma~\ref{prop:connected_linear}.
\end{proof}

\bibliographystyle{acm}
\bibliography{trees}

\begin{thebibliography}{10}

\bibitem{AKS}
{\sc Ajtai, M., Koml{\'o}s, J., and Szemer{\'e}di, E.}
\newblock On a conjecture of {L}oebl.
\newblock In {\em Graph theory, combinatorics, and algorithms, Vol.\ 1, 2
  (Kalamazoo, MI, 1992)}, Wiley-Intersci. Publ. Wiley, New York, 1995,
  pp.~1135--1146.

\bibitem{BPS1}
{\sc Besomi, G., Pavez-Sign{\'e}, M., and Stein, M.}
\newblock Degree conditions for embedding trees.
\newblock {\em SIAM J. Discrete Math. 33}, 3 (2019), 1521--1555.

\bibitem{bradob}
{\sc Brandt, S., and Dobson, E.}
\newblock The {E}rd{\H o}s--{S}\'os conjecture for graphs of girth $5$.
\newblock {\em Discrete Math. 150\/} (1996), 411--414.

\bibitem{Erdos64}
{\sc Erd\H{o}s, P.}
\newblock Extremal problems in graph theory.
\newblock In {\em Theory of graphs and its applications, Proc. Sympos.
  Smolenice\/} (1964), pp.~29--36.

\bibitem{Erdos1959}
{\sc Erd\H{o}s, P., and Gallai, T.}
\newblock On maximal paths and circuits of graphs.
\newblock {\em Acta Math. Acad. Sci. Hungar. 10}, 3 (1959), 337--356.

\bibitem{ErdosGraham}
{\sc Erd\H{o}s, P., and Graham, R.~L.}
\newblock On partition theorems for finite graphs.
\newblock In {\em Infinite and finite sets ({C}olloq. {K}eszthely, 1973;
  dedicated to {P}. {E}rd\H{o}s on his 60th birthday)}, vol.~10. Colloq. Math.
  Soc. J\'anos Bolyai, 1975, pp.~515--527.

\bibitem{goerlich2016}
{\sc Goerlich, A., and {Z}ak, A.}
\newblock On {E}rd{\H{o}}s-{S}\'os {C}onjecture for {T}rees of {L}arge {S}ize.
\newblock {\em Electron. J. Combin. 23}, 1 (2016), P1--52.

\bibitem{2k3:2016}
{\sc Havet, F., Reed, B., Stein, M., and Wood, D.~R.}
\newblock A variant of the {E}rd{\H{o}}s--{S}{\'o}s conjecture.
\newblock {\em J. Graph Theory 94}, 1 (2020), 131--158.

\bibitem{LKS4}
{\sc Hladk\'y, J., Koml\'os, J., Piguet, D., Simonovits, M., Stein, M., and
  Szemer\'edi, E.}
\newblock The {A}pproximate {L}oebl--{K}oml\'os--{S}\'os {C}onjecture {IV}:
  {E}mbedding {T}echniques and the {P}roof of the {M}ain {R}esult.
\newblock {\em SIAM J. Discrete Math. 31}, 2 (2017), 1072--1148.

\bibitem{regu}
{\sc Koml{\'{o}}s, J., Shokoufandeh, A., Simonovits, M., and Szemer{\'{e}}di,
  E.}
\newblock The regularity lemma and its applications in graph theory.
\newblock In {\em Theoretical Aspects of Computer Science, Advanced Lectures\/}
  (2000), pp.~84--112.

\bibitem{Krivilevich}
{\sc Krivelevich, M.}
\newblock Embedding spanning trees in random graphs.
\newblock {\em SIAM J. Discrete Math. 24}, 4 (2010), 1495--1500.

\bibitem{Montgomery}
{\sc {Montgomery}, R.}
\newblock {Spanning trees in random graphs}.
\newblock {\em Adv. Math. 356\/} (2019), 106793.

\bibitem{RS19a}
{\sc Reed, B., and Stein, M.}
\newblock Spanning trees in graphs of high minimum degree with a universal
  vertex {I}: An approximate asymptotic result.
\newblock Preprint 2019, arXiv 1905.09801.

\bibitem{rohzon}
{\sc Rozho\v{n}, V.}
\newblock A local approach to the {E}rd{\H{o}}s--{S}\'os conjecture.
\newblock {\em SIAM J. Discrete Math. 33}, 2 (2019), 643--664.

\bibitem{sacwoz}
{\sc Sacl\'e, J.-F., and Wo\'zniak, M.}
\newblock A note on the {E}rd{\H o}s--{S}\'os conjecture for graphs without
  ${C}_4$.
\newblock {\em J.~Combin.\ Theory (Series B) 70}, 2 (1997), 229--234.

\bibitem{Sze78}
{\sc Szemer{\'e}di, E.}
\newblock Regular partitions of graphs.
\newblock In {\em Probl\`emes combinatoires et th\'eorie des graphes (Colloq.
  Internat. CNRS, Univ. Orsay, Orsay, 1976)}, vol.~260 of {\em Colloq.
  Internat. CNRS}. CNRS, Paris, 1978, pp.~399--401.

\end{thebibliography}
\appendix{
\section{Concentration lemmas}\label{sec:concentration}

In this appendix we prove two results on the concentration of a given function around its mean value. Given $N\in\mathbb{N}$ and a function $f:[N]\to\mathbb{R}$, we write 
$$\|f\|_\infty=\max_{n\in[N]}|f(n)|$$ for the infinity norm of $f$. If $\mu$ is a probability measure on $[N]$ then, as usual, $$\EE_\mu(f)=\sum_{n\in [N]}f(n)\mu(n)$$
denotes the expectation of $f$ under $\mu$, and if $\mu$ is the uniform probability we write 
$$\mathbb{E}_{n\in[N]}f(n)=\frac{1}{N}\sum_{n\in [N]}f(n).$$

\begin{lemma}\label{lem:con}
	Let $N\in\mathbb N$, $t\in\mathbb R$ and  $\eps\in(0,1)$. Let $\mu$ be a probability measure on $[N]$ and let $f:[N]\to \mathbb{R}_+$  satisfying $\sqrt\eps \|f\|_\infty<t\le \EE_\mu (f).$ Then at least one of the following holds 
	\begin{enumerate}[(i)]
		\item $\mu(\{n:f(n)>(1+\sqrt{\varepsilon})t\})\ge \varepsilon$, or 
		\item $\mu(\{n:f(n)>(1-\sqrt[4]{\varepsilon}) t\})\ge1-\sqrt[4]{\varepsilon}$.\end{enumerate}
\end{lemma}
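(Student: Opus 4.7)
The plan is to argue by contrapositive: assume that conclusion (i) fails, and deduce that (ii) must hold. So suppose $\mu(A) < \varepsilon$, where $A := \{n \in [N] : f(n) > (1+\sqrt{\varepsilon})t\}$.

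The starting observation is that the hypothesis $\sqrt{\varepsilon}\|f\|_\infty < t$, combined with $\mu(A) < \varepsilon$, produces a tight upper bound on the contribution of $A$ to the expectation:
\[ \int_A f\, d\mu \le \mu(A)\cdot\|f\|_\infty < \varepsilon \cdot \frac{t}{\sqrt{\varepsilon}} = \sqrt{\varepsilon}\, t. \]
Combined with the assumption $\mathbb{E}_\mu(f) \ge t$, this forces the complementary contribution to satisfy
\[ \int_{[N]\setminus A} f\, d\mu > (1-\sqrt{\varepsilon})\, t. \]

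Next, I would split $[N]\setminus A$ into the ``small'' and ``middle'' pieces
\[ B := \{n : f(n) \le (1-\sqrt[4]{\varepsilon})\,t\} \quad\text{and}\quad C := [N]\setminus(A\cup B), \]
and use the pointwise bounds $f \le (1-\sqrt[4]{\varepsilon})t$ on $B$ and $f \le (1+\sqrt{\varepsilon})t$ on $C$, together with $\mu(B)+\mu(C)\le 1$, to produce the upper estimate
\[ \int_{[N]\setminus A} f\, d\mu \;\le\; (1+\sqrt{\varepsilon})\,t \;-\; \mu(B)\bigl(\sqrt{\varepsilon}+\sqrt[4]{\varepsilon}\bigr)\,t. \]
Combining this with the lower estimate from the previous step gives a single linear inequality in $\mu(B)$; after factoring $\sqrt{\varepsilon}+\sqrt[4]{\varepsilon} = \sqrt[4]{\varepsilon}(1+\sqrt[4]{\varepsilon})$ and simplifying, one reads off $\mu(B) \le \sqrt[4]{\varepsilon}$, which translates into $\mu(\{f > (1-\sqrt[4]{\varepsilon})t\}) \ge 1-\sqrt[4]{\varepsilon}$ and is precisely conclusion (ii).

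The main obstacle is really just one of careful bookkeeping: choosing the three-way partition $\{A, B, C\}$ so that the three Markov-type bounds combine cleanly into a linear inequality in $\mu(B)$. The conceptual point that makes the whole argument tick is that the hypothesis $\sqrt{\varepsilon}\|f\|_\infty < t$ is calibrated exactly so that the ``outlier'' contribution from $A$ cannot exceed order $\sqrt{\varepsilon}\,t$, forcing the bulk of the mass on $A^c$ to concentrate in the narrow band $C$ just below the threshold $t$, which is what makes (ii) unavoidable.
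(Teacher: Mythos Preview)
Your approach coincides with the paper's: assume (i) fails, bound $\int_A f\,d\mu$ by $\sqrt{\varepsilon}\,t$, and then split $A^c$ into a ``small'' piece and a ``middle'' piece to squeeze out a bound on the mass of the small piece. The problem is in your final simplification. From the inequality
\[
(1-\sqrt{\varepsilon})\,t \;<\; (1+\sqrt{\varepsilon})\,t \;-\; \mu(B)\bigl(\sqrt{\varepsilon}+\sqrt[4]{\varepsilon}\bigr)\,t
\]
you correctly obtain $\mu(B)\bigl(\sqrt{\varepsilon}+\sqrt[4]{\varepsilon}\bigr)<2\sqrt{\varepsilon}$, but this only gives
\[
\mu(B) \;<\; \frac{2\sqrt{\varepsilon}}{\sqrt[4]{\varepsilon}\bigl(1+\sqrt[4]{\varepsilon}\bigr)} \;=\; \frac{2\sqrt[4]{\varepsilon}}{1+\sqrt[4]{\varepsilon}},
\]
and for every $\varepsilon\in(0,1)$ one has $\tfrac{2}{1+\sqrt[4]{\varepsilon}}>1$, so the right-hand side is \emph{strictly larger} than $\sqrt[4]{\varepsilon}$. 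The claimed bound $\mu(B)\le\sqrt[4]{\varepsilon}$ therefore does not follow. This is not a loose estimate that can be tightened: with your cut at $(1-\sqrt[4]{\varepsilon})t$, one can build three-atom examples (mass just under $\varepsilon$ at a value just below $t/\sqrt{\varepsilon}$, some mass at $(1+\sqrt{\varepsilon})t$, and the remainder at $(1-\sqrt[4]{\varepsilon})t$) in which (i) fails and yet $\mu(B)$ genuinely exceeds $\sqrt[4]{\varepsilon}$.

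The paper's proof differs exactly here: it places the cut for the ``small'' set $B_1$ at the lower threshold $(1+\sqrt{\varepsilon}-2\sqrt[4]{\varepsilon})t$, so that the gap to the upper bound $(1+\sqrt{\varepsilon})t$ is $2\sqrt[4]{\varepsilon}t$ rather than $(\sqrt{\varepsilon}+\sqrt[4]{\varepsilon})t$. The same comparison then yields $\mu(B_1)\le \tfrac{2\sqrt{\varepsilon}}{2\sqrt[4]{\varepsilon}}=\sqrt[4]{\varepsilon}$ on the nose. (One may note that $(1+\sqrt{\varepsilon}-2\sqrt[4]{\varepsilon})<(1-\sqrt[4]{\varepsilon})$ for $\varepsilon\in(0,1)$, so the paper's argument as written literally delivers $\mu\bigl(\{f\ge(1+\sqrt{\varepsilon}-2\sqrt[4]{\varepsilon})t\}\bigr)\ge 1-\sqrt[4]{\varepsilon}$, a hair weaker than (ii) as stated; this weaker form is, however, all that is used in the applications.)
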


	\begin{proof}
		Let $A$ be the set of all $n\in[N]$ with $f(n)>(1+\sqrt{\varepsilon})t$ and set $B:=[N]\setminus A$. Suppose that (i) does not hold. Then $\mu(A)\le \varepsilon$, and therefore,
		\begin{align}
		\sum_{n\in B}\mu(n)f(n) =\EE_\mu (f) -\sum_{n\in A}\mu(n)f(n) 
	 \ge t-\mu(A)\|f\|_\infty 
		 \ge (1-\sqrt{\varepsilon})t.\label{eq:02}
		\end{align} 
		Let $B_1$ be the set of all $n\in B$  such that $f(n)<(1+\sqrt{\varepsilon}-2\sqrt[4]{\varepsilon}) t$, and set $B_2:=B\setminus B_1$. From \eqref{eq:02} and the definition of $B$ we deduce that 
		\begin{align*}
			(1-\sqrt{\varepsilon})t 
			\le(1+\sqrt{\varepsilon})t\cdot \mu(B)-2\sqrt[4]{\varepsilon}t\cdot \mu(B_1) 
			\le(1+\sqrt{\varepsilon})t-2\sqrt[4]{\varepsilon}t\cdot \mu(B_1),
		\end{align*}
		and hence, $\mu(B_1)\le  \sqrt[4]{\varepsilon}$. Therefore,   $\mu(A\cup B_2)\ge 1-\mu(B_1)\ge 1-\sqrt[4]{\varepsilon},$ which implies (ii).\end{proof}	}
	As a corollary of Lemma~\ref{lem:con} we get the following useful result. 
	
\begin{lemma}\label{lem:con2} 
	Let $N\in\mathbb N$, and let $\eps\in(0,\frac 12)$. Let $f:[N]\to \mathbb{R}_+$ be a function and let $t>0$ such that $t\le \EE_{n\in [N]} f(n)$ and $\|f\|_{\infty}\le(1+\eps)t$. Then $f(n)\ge (1-\sqrt{\eps})t$ for every $n$ in a set of size at least $(1-\sqrt{\eps})N$.
\end{lemma}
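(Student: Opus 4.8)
The goal is to derive Lemma~\ref{lem:con2} as a special case of Lemma~\ref{lem:con}. The natural choice is to take $\mu$ to be the uniform probability measure on $[N]$, so that $\EE_\mu(f)=\EE_{n\in[N]}f(n)$, and to feed Lemma~\ref{lem:con} the same threshold $t$. The two hypotheses of Lemma~\ref{lem:con2}, namely $t\le\EE_{n\in[N]}f(n)$ and $\|f\|_\infty\le(1+\eps)t$, should be exactly what is needed to check the hypothesis $\sqrt\eps\|f\|_\infty<t\le\EE_\mu(f)$ of Lemma~\ref{lem:con}. Indeed, $\sqrt\eps\|f\|_\infty\le\sqrt\eps(1+\eps)t$, and since $\eps<\tfrac12$ we have $\sqrt\eps(1+\eps)<1$ (as $\sqrt\eps<\tfrac1{\sqrt2}$ and $1+\eps<\tfrac32$, and $\tfrac{3}{2\sqrt2}<1$), so $\sqrt\eps\|f\|_\infty<t$ holds.

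**Key steps.** First I would invoke Lemma~\ref{lem:con} with this $\mu$ and $t$, obtaining that either (i) a $\ge\eps$-fraction of indices $n$ satisfy $f(n)>(1+\sqrt\eps)t$, or (ii) a $\ge(1-\sqrt[4]\eps)$-fraction satisfy $f(n)>(1-\sqrt[4]\eps)t$. The point is that case~(i) is \emph{impossible} under the extra hypothesis $\|f\|_\infty\le(1+\eps)t$: if $f(n)>(1+\sqrt\eps)t$ for some $n$, then $\|f\|_\infty>(1+\sqrt\eps)t>(1+\eps)t$ (using $\sqrt\eps>\eps$, valid since $\eps<1$), contradicting the bound on $\|f\|_\infty$. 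Hence case~(ii) must hold. Finally, $(1-\sqrt[4]\eps)t\ge(1-\sqrt\eps)t$? No --- here one must be slightly careful: $\sqrt[4]\eps>\sqrt\eps$ for $\eps<1$, so $(1-\sqrt[4]\eps)t<(1-\sqrt\eps)t$, which is the \emph{wrong} direction. So rather than reading off (ii) directly, I would instead run the argument so that the output threshold comes out as $(1-\sqrt\eps)t$; the cleanest fix is to apply Lemma~\ref{lem:con} not with $\eps$ but with a smaller parameter $\eps'$ chosen so that $\sqrt[4]{\eps'}\le\sqrt{\eps}$ (e.g.\ $\eps'=\eps^2$), after first checking that the hypothesis of Lemma~\ref{lem:con} still holds with $\eps'$ in place of $\eps$ (it does, since decreasing the parameter only weakens the requirement $\sqrt{\eps'}\|f\|_\infty<t$). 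With $\eps'=\eps^2$, case~(i) gives some index with $f(n)>(1+\eps)t\ge\|f\|_\infty$, again impossible, and case~(ii) yields $f(n)>(1-\sqrt[4]{\eps'})t=(1-\sqrt\eps)t$ on a set of measure $\ge1-\sqrt[4]{\eps'}=1-\sqrt\eps$, i.e.\ on a set of size $\ge(1-\sqrt\eps)N$, as claimed. A strict inequality $f(n)>(1-\sqrt\eps)t$ is obtained, which is even a hair stronger than the stated $\ge$.

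**Main obstacle.** There is essentially no deep obstacle here; the only thing to be careful about is the direction of the inequalities between $\eps$, $\sqrt\eps$, and $\sqrt[4]\eps$ on $(0,\tfrac12)$, and making sure that whichever small parameter is plugged into Lemma~\ref{lem:con} both (a) still satisfies that lemma's hypothesis and (b) produces an output threshold of at least $(1-\sqrt\eps)t$ and an output measure of at least $1-\sqrt\eps$. Choosing $\eps'=\eps^2$ handles both at once, and the impossibility of alternative~(i) under the $\|f\|_\infty$ bound is the one genuine (but trivial) idea.
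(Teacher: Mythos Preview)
Your proposal is correct and follows exactly the route the paper intends: the paper states Lemma~\ref{lem:con2} only as a corollary of Lemma~\ref{lem:con} without spelling out the derivation, and your application of Lemma~\ref{lem:con} with the uniform measure and parameter $\eps'=\eps^2$ (so that alternative~(i) is ruled out by $\|f\|_\infty\le(1+\eps)t$ and alternative~(ii) yields precisely the desired bound) is the natural way to fill this in. Your care in noting that the naive choice $\eps'=\eps$ gives the wrong direction between $\sqrt[4]\eps$ and $\sqrt\eps$, and that the fix $\eps'=\eps^2$ still satisfies the hypothesis $\sqrt{\eps'}\|f\|_\infty<t$, is exactly right.
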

%
	\end{document}